\tikzstyle arrowstyle=[scale=1]
\tikzstyle directed=[postaction={decorate,decoration={markings,
    mark=at position .55 with {\arrow[arrowstyle]{stealth}}}}]
\tikzstyle ddirected=[postaction={decorate,decoration={markings,
    mark=at position .45 with {\arrow[arrowstyle]{stealth}},
    mark=at position .55 with {\arrow[arrowstyle]{stealth}}}}]
\tikzstyle reverse directed=[postaction={decorate,decoration={markings,
    mark=at position .55 with {\arrowreversed[arrowstyle]{stealth};}}}]
\tikzstyle reverse ddirected=[postaction={decorate,decoration={markings,
    mark=at position .55 with {\arrowreversed[arrowstyle]{stealth};},
    mark=at position .65 with {\arrowreversed[arrowstyle]{stealth};}}}]
\newcommand{\co}{\colon \thinspace}
\newcommand\frakc{\mathfrak{c}}
\newcommand\calp{\mathcal{P}}
\newcommand\calq{\mathcal{Q}}
\newcommand\cals{\mathcal{S}}
\newcommand\maxi{\mathit{max}}
\theoremstyle{plain}
\newtheorem{para}{}[section]
\newtheorem{theorem}[para]{Theorem}
\newtheorem{proposition}[para]{Proposition}
\newtheorem{lemma}[para]{Lemma}
\theoremstyle{definition}
\newtheorem{question}[para]{Question}
\newtheorem{definition}[para]{Definition}
\newtheorem{example}[para]{Example}
\newtheorem*{notation}{Notation}
\newtheorem{stuff}[para]{}
\theoremstyle{remark}
\newtheorem{claim}{Claim}
\begin{document}

\title{Bounds for several-disk packings of hyperbolic surfaces}
\author{Jason DeBlois}
\address{Department of Mathematics, University of Pittsburgh}\email{jdeblois@pitt.edu}

\begin{abstract}  For any given $k\in\mathbb{N}$, this paper gives upper bounds on the radius of a packing of a complete hyperbolic surface of finite area by $k$ equal-radius disks in terms of the surface's topology.  We show that these bounds are sharp in some cases and not sharp in others.\end{abstract}

\maketitle

By a \textit{packing} of a metric space we will mean a collection of disjoint open metric balls.  This paper considers packings of a fixed radius on finite-area hyperbolic (i.e. constant curvature $-1$) surfaces, and our methods are primarily those of low-dimensional topology and hyperbolic geometry.  But before describing the main results in detail I would like to situate them in the context of the broader question below, so I will first list some of its other instances, survey what is known toward their answers, and draw analogies with the setting of this paper:

\begin{question}\label{broad}  For a fixed $k\in\mathbb{N}$ and topological manifold $M$ that admits a complete constant-curvature metric of finite volume, what is the supremal density of packings of $M$ by $k$ balls of equal radius, taken over all such metrics with fixed curvature? (Here the \textit{density} of a packing is the ratio of the sum of the balls' volumes to that of $M$.)\end{question}

The botanist P.L.~Tammes posed a positive-curvature case of Question \ref{broad} which is now known as \textit{Tammes' problem}, where $M = \mathbb{S}^2$ with its (rigid) round metric, in 1930.  Answers (i.e.~sharp density bounds) are currently known for $k\leq 14$ and $k=24$ after work of many authors, with the $k=14$ case only appearing in 2015 \cite{MusTar} (the problem's history is surveyed in \S 1.2 there).  

In the Euclidean setting, the case of Question \ref{broad} with $M$ an $n$-dimensional torus is equivalent to the famous \textit{lattice sphere packing problem}, see eg. \cite{ConSl}, when $k=1$.  When $k>1$ it is the \textit{periodic packing problem}; and finding the supremum over all $k$ is equivalent to the \textit{sphere packing problem}, which is solved only in dimensions $2$, $3$ \cite{Hales} and, very recently, $8$ \cite{Viazovska} and $24$ \cite{Via_etal}.  The lattice sphere packing problem is solved in all additional dimensions up to $8$, see \cite[Table 1.1]{ConSl}.

The hyperbolic case is also well studied.  Here a key tool, known in low-dimensional topology as ``B\"or\"oczky's theorem", asserts that any packing of $\mathbb{H}^n$ by balls of radius $r$ has \textit{local} density bounded above by the density in an equilateral $n$-simplex with sidelength $2r$ of its intersection with balls centered at its vertices \cite{Bor_bound}.  Rogers proved the analogous result for Euclidean packings earlier \cite{Rogers}.  We note that in the hyperbolic setting the bound depends on $r$ as well as $n$.

B\"or\"oczky's theorem yields bounds towards an answer to Question \ref{broad} for an arbitrary $k\in\mathbb{N}$ and complete hyperbolic manifold $M$, since a packing of $M$ has a packing of $\mathbb{H}^n$ as its preimage under the universal cover $\mathbb{H}^n\to M$.  Analogously, Rogers' result yields bounds toward the lattice sphere packing problem, which until recently were still the best known in some dimensions (see \cite[Appendix A]{CohnZhao}).  This basic observation is particularly useful in low dimensions: for instance Rogers' lattice sphere packing bound is sharp only in dimension $2$ (where it is usually attributed to Gauss).  In the three-dimensional hyperbolic setting, where B\"or\"oczky's Theorem was actually proved earlier by B\"or\"oczky--Florian \cite{BoFlo}, its ``$r=\infty$'' (i.e.~horoball packing) case yields sharp lower bounds on the volumes of cusped hyperbolic 3-orbifolds \cite{Meyerhoff} and 3-manifolds \cite{Adams}, for example.

In dimension two, C.~Bavard observed that B\"or\"oczky's theorem implies answers to the $k=1$ case of Question \ref{broad} for every closed hyperbolic surface $F$ \cite{Bavard}; that is, it yields sharp bounds on the maximal radius of a ball embedded in such a surface.  The same argument yields bounds towards Question \ref{broad} for arbitrary $k$ and hyperbolic surfaces $F$, as we will observe in Section \ref{sharpness}.  This was already shown for closed genus-two surfaces when $k=2$, by Kojima--Miyamoto \cite{KM}.

For a non-compact surface $F$ of finite area, it is no longer true that a maximal-density packing of $F$ pulls back to a packing of $\mathbb{H}^2$ with maximal local density: the cusps of $F$ yield ``empty horocycles", large vacant regions in the preimage packing.  In \cite{DeB_Voronoi} I introduced a new technique for proving two-dimensional packing theorems and used it to settle the $k=1$ case of Question \ref{broad} for arbitrary complete, orientable hyperbolic surfaces of finite area.  M.~Gendulphe has since released a preprint which resolves the non-orientable $k=1$ cases by another method \cite{Gendulphe}.  But the restriction to orientable surfaces in \cite{DeB_Voronoi} is not necessary, as I show in Section \ref{bounds} below.  The first main result here records density bounds for arbitrary $k\in\mathbb{N}$ and complete, finite-area hyperbolic surfaces (orientable or not) which follow from the packing theorems of \cite{DeB_Voronoi}.

In fact we bound the \textit{radius} of packings.  But this is equivalent to bounding their density, since the area of a hyperbolic disk is determined by its radius, and by the Gauss--Bonnet theorem the area of a complete, finite-area hyperbolic surface is determined by its topological type.

\newcommand\VorBound{For $\chi< 0$ and $n\geq 0$, and each $k\in\mathbb{N}$, the radius $r$ of a packing of a complete, finite-area, $n$-cusped hyperbolic surface $F$ with Euler characteristic $\chi$ by $k$ disks of equal radius satisfies $r\leq r_{\chi,n}^k$, where $r_{\chi,n}^k$ is the unique solution to:\begin{align*}
  \left(6-\frac{6\chi+3n}{k}\right)\alpha(r_{\chi,n}^k) + \frac{2n}{k}\beta(r_{\chi,n}^k) & = 2\pi, \end{align*}
Here $\alpha(r)= 2\sin^{-1}\left(\frac{1}{2\cosh r}\right)$ and $\beta(r)= \sin^{-1}\left(\frac{1}{\cosh r}\right)$ each measure vertex angles, respectively those of an equilateral triangle with sides of length $2r$, and at finite vertices of a horocyclic ideal triangle with compact side length $2r$.  

A complete, finite-area, $n$-cusped hyperbolic surface $F$ has an equal-radius packing by $k$ disks of radius $r=r_{\chi,n}^k$ if and only if $F$ decomposes into equilateral and horocyclic ideal triangles, all with compact sidelength $2r_{\chi,n}^k$ and intersecting pairwise (if at all) only at vertices or along entire edges, with $k$ vertices and exactly $n$ horocyclic ideal triangles.

In this case, the disk centers $x_1,\hdots,x_k$ are the vertices of such a decomposition of $F$, and conversely, such a decomposition is canonically obtained from the Delaunay tessellation of $F$ determined by $\{x_1,\hdots,x_k\}$ (in the sense of \cite[Cor.~6.27]{DeB_Delaunay}) by subdividing each horocyclic two cell with a single ray from its vertex that exits its cusp.}
\theoremstyle{plain}
\newtheorem*{VorBoundProp}{Proposition \ref{Vor bound}}
\begin{VorBoundProp}\VorBound\end{VorBoundProp}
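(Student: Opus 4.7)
The plan is to extract from the Delaunay tessellation of $F$ determined by the disk centers a triangulation whose interior angles are bounded below by the packing theorems of \cite{DeB_Voronoi}, and then combine those bounds with a combinatorial count to obtain the stated inequality.

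Given a packing $\{B(x_i,r):1\le i\le k\}$ of $F$, first form the Delaunay tessellation of $F$ determined by $\{x_1,\dots,x_k\}$ as in \cite[Cor.~6.27]{DeB_Delaunay} and carry out the horocyclic subdivision indicated in the proposition: for each of the $n$ cusps, adjoin a geodesic ray from the relevant vertex of its horocyclic 2-cell into the cusp. The result is a triangulation $\mathcal T$ of $F$ whose faces are of two types: $T_f$ \emph{finite} triangles with all three vertices among the $x_i$, and $T_h=n$ \emph{horocyclic ideal} triangles each with two finite vertices among the $x_i$ and one ideal vertex at a cusp. A standard Euler-characteristic count pins down $T_f$: since every triangle has three edges and every edge lies in two triangles we have $2E=3(T_f+T_h)$, while only finite vertices contribute to $\chi(F)$, so $k-E+(T_f+T_h)=\chi$; eliminating $E$ gives $T_f+T_h=2(k-\chi)$ and hence $T_f=2k-2\chi-n$.

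Next I would invoke \cite{DeB_Voronoi} to obtain the angle lower bounds on $\mathcal T$: every interior angle of a finite triangle is at least $\alpha(r)$, with equality iff the triangle is equilateral of side $2r$; and every finite-vertex angle of a horocyclic ideal triangle is at least $\beta(r)$, with equality iff its compact side has length $2r$. Summing vertex-angle incidences at finite vertices gives $2\pi k$ on the one hand and $3T_f\,\alpha(r)+2T_h\,\beta(r)$ as a lower bound on the other:
\[
 2\pi k \;\ge\; (6k-6\chi-3n)\,\alpha(r) + 2n\,\beta(r).
\]
Dividing by $k$ yields the stated inequality; since $\alpha$ and $\beta$ are each strictly decreasing in $r$, monotonicity forces $r\le r^k_{\chi,n}$.

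The ``if and only if'' half then follows by tracking the equality case: the above inequality is tight precisely when every finite triangle is equilateral of side $2r$ and every horocyclic triangle has compact side $2r$, which is the asserted decomposition. The converse, that such a decomposition furnishes a packing of radius $r^k_{\chi,n}$, is immediate, as is the final claim identifying the decomposition with the subdivided Delaunay tessellation of the vertex set. I expect the principal technical hurdle to lie in the appeal to \cite{DeB_Voronoi} for the lower bound $\beta(r)$ at finite vertices of a horocyclic ideal triangle: such triangles appear only after the subdivision step, so its compatibility with the centered-dual machinery of that paper must be verified, and the rigidity characterization in the equality case rests on the same analysis.
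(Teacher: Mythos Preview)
Your approach has a genuine gap in the direction of the key inequality. You claim that each interior angle of a finite Delaunay triangle is at least $\alpha(r)$ and each finite-vertex angle of a horocyclic ideal triangle is at least $\beta(r)$, and from this deduce $2\pi k \ge (6k-6\chi-3n)\alpha(r) + 2n\beta(r)$. But this inequality points the wrong way: dividing by $k$ gives $2\pi \ge \bigl(6-\frac{6\chi+3n}{k}\bigr)\alpha(r)+\frac{2n}{k}\beta(r)$, and since the right side is strictly decreasing in $r$ and equals $2\pi$ at $r^k_{\chi,n}$, this would yield $r\ge r^k_{\chi,n}$, not $r\le r^k_{\chi,n}$. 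The underlying angle claim is also false: a triangle with all three sides of length at least $2r$ can have individual angles strictly smaller than $\alpha(r)$ (take an isoceles triangle with two sides equal to $2r$ and the third close to $4r$; the base angles tend to $0$). What is true is that such a triangle has \emph{area} at least $D_0(2r,2r,2r)=\pi-3\alpha(r)$, hence angle \emph{sum} at most $3\alpha(r)$; the analogous area bound for horocyclic triangles gives angle sum at most $2\beta(r)$. These upper bounds on angle sums reverse your inequality to $2\pi k \le (6k-6\chi-3n)\alpha(r)+2n\beta(r)$, which is what is needed.

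A second gap is your assumption that the Delaunay tessellation, after subdividing only the horocyclic cells, is already a triangulation. Compact Delaunay $2$-cells can have any number of edges, so your face count $T_f+T_h=2(k-\chi)$ and the identity $2E=3(T_f+T_h)$ are not available in general. The paper handles both issues at once by working instead with the \emph{centered dual complex plus} of \cite{DeB_Voronoi}, which permits cells with more than three sides, and by invoking the area lower bounds of Theorems~3.31 and 4.16 there (whose equality cases are precisely $n_i=3$ with all compact sides equal to $2r$). Summing these area bounds against Gauss--Bonnet and then substituting the Euler identity $2e-2m=2k-2\chi$ produces the inequality in the correct direction and simultaneously identifies the equality case with the equilateral/horocyclic decomposition.
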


Above, a \textit{horocyclic ideal triangle} is the convex hull in $\mathbb{H}^2$ of three points, two of which lie on a horocycle $C$ with the third at the ideal point of $C$.  We prove Proposition \ref{Vor bound} in Section \ref{bounds}.

To my knowledge, the bounds of Proposition \ref{Vor bound} are the best in the literature for every $k$, $\chi$ and $n$.  They coincide with those from B\"or\"oczky's Theorem in the closed ($n=0$) case but are otherwise stronger, see Proposition \ref{sharper}.  But as we show below, they are not sharp in general; indeed, it is easy to see that they are not \textit{attained} in general.  For a closed surface $F$ with Euler characteristic $\chi$ attaining the bound of $r_{\chi,0}^k$, the equilateral triangles that decompose $F$ all have equal vertex angles, since they have equal side lengths.  It follows that its triangulation must be \textit{regular}; that is, each vertex must see the same number of triangles.  This imposes the condition that $k$ divide $6\chi$, since the number of such triangles is $2(k-\chi)$ by a simple computation.

In this sense Proposition \ref{Vor bound} is akin to L.~Fejes T\'oth's general bound toward Tammes' problem \cite{Toth}, which is attained only for $k=3$, $4$, $6$, and $12$: those for which $S^2$ has a regular triangulation with $k$ vertices. (The last three are realized by the boundaries of a tetrahedron, octahedron, and icosahedron, respectively).  When the bound is attained in the non-compact $(n>0)$ case, one correspondingly expects both equilateral and horocyclic ideal triangle vertices to be ``evenly distributed''.  Lemma \ref{i_n_j} formulates this precisely, and shows that it imposes the conditions that $k$ divide both $6\chi$ and $n$.  The second main result of this paper shows that the bound is attained when these conditions hold.

\newcommand\Attained{For $\chi< 0$ and $n\geq0$, if $k\in\mathbb{N}$ divides both $n$ and $6\chi$ then there exists a complete hyperbolic surface $F$ of finite area with Euler characteristic $\chi$ and $n$ cusps and a packing of $F$ by $k$ disks of radius $r_{\chi,n}^k$.  If $\chi+n$ is even then $F$ may be taken orientable or non-orientable.}
\newtheorem*{attained theorem}{Theorem \ref{Vor bound attained}}
\begin{attained theorem}\Attained\end{attained theorem}

The non-trivial part of the proof is the purely topological Proposition \ref{all triangulations}, which constructs closed surfaces of Euler characteristic $\chi+n$, triangulated with $k+n$ vertices of which $n$ have valence one, when $k$ divides both $6\chi$ and $n$.  (The valence-one vertices correspond to cusps of $F$.)  Previous work of Edmonds--Ewing--Kulkarni \cite{EdEwKu} covers the $n=0$ cases of Proposition \ref{all triangulations} but its techniques, which exploit the existence of certain branched coverings in this setting, do not naturally extend to the $n>0$ cases.  The proof here is independent of \cite{EdEwKu}, even for $n=0$.

I could not prove in the $n>0$ case that the bound $r_{\chi,n}^k$ is attained \textit{only} if $k$ divides both $6\chi$ and $n$.  But I do show in Lemma \ref{this ain't it} that for a given $\chi<0$ and $n>0$, it is attained only at finitely many $k$.  Section \ref{dullity} goes on to establish:

\newcommand\NotAttained{For any $\chi < 0$ and $n\geq 0$, and $k\in\mathbb{N}$, the bound $r_{\chi,n}^k$ is sharp if and only if it is attained.  For $n=0$, $r_{\chi,0}^k$ is attained if and only if $k$ divides $6\chi$.  For any fixed $\chi<0$ and $n>0$, there are only finitely many $k$ for which $r_{\chi,n}^k$ is attained.}
\newtheorem*{not attained theorem}{Theorem \ref{Vor bound not attained}}
\begin{not attained theorem}\NotAttained\end{not attained theorem}

This follows immediately from Lemma \ref{this ain't it} and the main result Proposition \ref{there's a max} of Section \ref{dullity}, which asserts for each $\chi$ and $n$ that the function on the entire moduli space of all (orientable or non-orientable) hyperbolic surfaces with Euler characteristic $\chi$ and $n$ cusps which measures the maximal $k$-disk packing radius does attain a maximum.  (This is not obvious because the function in question is not proper, see \cite[Cor.~0.1]{DeB_Voronoi} in the case $k=1$.) Key to the proof of Proposition \ref{there's a max} is Lemma \ref{the FLoBUS}, which asserts that the thick part of a surface with a short geodesic can be inflated while increasing the length of the geodesic.  Proposition \ref{there's a max} and its proof were suggested by a referee for \cite{DeB_locmax}, and sketched by Gendulphe \cite{Gendulphe}, in the case $k=1$.  

I'll end this introduction with a couple of further questions.  First, recall from the discussion above that Lemma \ref{this ain't it} is is likely not best possible.  It would be interesting to know whether the bound of Theorem \ref{Vor bound} is attained in \textit{any} cases beyond those covered by Theorem \ref{Vor bound attained}.

\begin{question}  For $\chi< 0$ and $n> 0$, does there exist any $k$ not dividing both $n$ and $6\chi$ for which a complete hyperbolic surface of finite area with $n$ cusps and Euler characteristic $\chi$ admits a packing by $k$ disks that each have radius $r_{\chi,n}^k$?\end{question}

For any example not covered by Theorem \ref{Vor bound attained}, some observations from the proof of Lemma \ref{this ain't it} can be used to show that $\alpha(r_{\chi,n}^k)$ and $\beta(r_{\chi,n}^k)$ must satisfy an additional algebraic dependence beyond what is prescribed in Theorem \ref{Vor bound}.  While this seems unlikely, it is not clear (to me at least) that it does not ever occur.  In a follow-up paper I will consider the case of two disks on the three-punctured sphere, showing that at least in this simplest possible case it does not.

It would also be interesting to know whether $k$-disk packing radius has local but non-global maximizers.  We phrase the question below in the language introduced above Proposition \ref{there's a max}.

\begin{question}  For which smooth surfaces $\Sigma$ and $k\in\mathbb{N}$ does the function $\maxi_k$ from Definition \ref{maximy max max} have a local maximum on $\mathfrak{T}(\Sigma)$ that is not a global maximum?  In particular, can this occur if $k$ divides both $6\chi$ and $n$, where $\chi$ is the Euler characteristic of $\Sigma$ and $n$ its number of cusps?\end{question}

Gendulphe answered this ``no'' for $k=1$ \cite{Gendulphe}, extending my result on the orientable case \cite{DeB_locmax}.

\subsection*{Acknowledgements}  Thanks to Dave Futer for a keen observation, and for pointing me to \cite{EdEwKu}, and to Ian Biringer for a helpful conversation.  Thanks also to the anonymous referee for a careful reading and helpful comments.

\section{Decompositions of orientable and non-orientable surfaces}\label{bounds}

The bound of \cite[Theorem 5.11]{DeB_Voronoi}, which Proposition \ref{Vor bound} generalizes, is proved using the \textit{centered dual complex plus} as defined in Proposition 5.9 of \cite{DeB_Voronoi}.  This is a decomposition of a finite-area hyperbolic surface $F$ that is canonically determined by a finite subset of $F$.  In this section we will first recap the construction of the centered dual plus, and show that \cite[Prop.~5.9]{DeB_Voronoi} carries through to the non-orientable case without revision.  Then we will prove Proposition \ref{Vor bound}.

Let $F$ be a complete, finite-area hyperbolic surface, $\cals\subset F$ a finite set, and $\pi\co\mathbb{H}^2\to F$ a locally isometric universal cover.  We will assume here that $F$ is non-orientable, since the orientable case is covered by previous work, and let $F_0$ be the orientable double cover of $F$ and $\cals_0$ be the preimage of $\cals$ in $F_0$.  Note that $\pi$ factors through a locally isometric universal cover $p\co\mathbb{H}^2\to F_0$, so in fact $\widetilde{\cals}=p^{-1}(\cals_0)$.
This set is invariant under the isometric actions of $\pi_1 F$ and $\pi_1 F_0$ by covering transformations.

Theorem 5.1 of \cite{DeB_Voronoi}, which is a rephrasing of \cite[Thrm.~6.23]{DeB_Delaunay} for surfaces, asserts the existence of a $\pi_1 F_0$-invariant \textit{Delaunay tessellation} of $p^{-1}(\cals_0)$ characterized by the following \textit{empty circumcircles condition}:

\begin{quote}\it  For each circle or horocycle $S$ of $\mathbb{H}^2$ that intersects $\widetilde{\cals}$ and bounds a disk or horoball $B$ with $B\cap\widetilde{\cals} = S\cap\widetilde{\cals}$, the closed convex hull of $S\cap\widetilde\cals$ in $\mathbb{H}^2$ is a Delaunay cell. Each Delaunay cell has this form.\end{quote}

Since this characterization is in purely geometric terms it implies that the Delaunay tessellation is invariant under every isometry that leaves $\widetilde{\cals}$ invariant.  In particular, it is $\pi_1 F$-invariant.  In fact the entire \cite[Th.~5.1]{DeB_Voronoi} extends to the non-orientable case; one needs only additionally observe that the parabolic fixed point sets of $\pi_1 F$ and $\pi_1 F_0$ are identical. We will now run through the remaining results of Sections 5.1 and 5.2 of \cite{DeB_Voronoi}, which build to Proposition 5.9 there, and comment on their extensions to the non-orientable case.  

Corollary 5.2 of \cite{DeB_Voronoi} makes three assertions about properties of the Delaunay tessellation's image in the quotient surface, which all extend directly to the non-orientable case.  The first, on finiteness of the number of $\pi_1 F_0$-orbits of Delaunay cells, implies the same for $\pi_1 F$-orbits.  The original proof (in \cite[Cor.~6.26]{DeB_Delaunay}) of the the second, on interiors of compact cells embedding, applies directly.  In our setting, the third assertion is that for each non-compact Delaunay cell $C_u$, which is invariant under some parabolic subgroup $\Gamma_u$ of $\pi_1 F_0$, $p|_{\mathit{int}\,C_u}$ factors through an embedding of $\mathit{int}\,C_u/\Gamma_u$ to a cusp of $F_0$.  Here we have:

\begin{lemma}  The stabilizer of $C_u$ in $\pi_1 F$ is also $\Gamma_u$; each cusp of $F_0$ projects homeomorphically to $F$; and $\pi|_{\mathit{int}\,C_u}$ factors through an embedding of $\mathit{int}\,C_u/\Gamma_u$ to a cusp of $F$.\end{lemma}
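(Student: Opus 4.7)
The plan is to prove the three claims in order, using two basic observations: every parabolic isometry of $\mathbb{H}^2$ is orientation-preserving, and $\pi_1 F$ is torsion-free (as it acts freely on $\mathbb{H}^2$). For the stabilizer assertion, let $g\in\pi_1 F$ preserve $C_u$. Since $u$ is the unique ideal vertex of $C_u$, $g$ fixes $u$. An isometry of $\mathbb{H}^2$ that fixes $u$ is either parabolic, hyperbolic with $u$ as an endpoint of its axis, a reflection in a geodesic with $u$ as an endpoint, or a glide reflection along such an axis. Reflections have order two and so cannot lie in $\pi_1 F$; hyperbolic isometries and glide reflections scale horocycles around $u$ by a factor different from $1$, so neither can preserve the horocycle $S$ on which the finite vertices of $C_u$ lie, and hence neither can stabilize $C_u$. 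Therefore $g$ must be parabolic, hence orientation-preserving, so $g\in\pi_1 F\cap\mathrm{Isom}^+(\mathbb{H}^2)=\pi_1 F_0$; the given parabolic stabilizer property in $\pi_1 F_0$ then places $g$ in $\Gamma_u$.

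The second claim follows from the same observation: every parabolic element of $\pi_1 F$ fixing $u$ lies in $\pi_1 F_0$, so the maximal parabolic subgroup $P_u$ of $\pi_1 F$ at $u$ coincides with the maximal parabolic of $\pi_1 F_0$ at $u$. A horoball $H_u$ chosen small enough to be precisely invariant under $P_u$ in $\pi_1 F$ is therefore also precisely invariant under $P_u$ in $\pi_1 F_0$, so $H_u/P_u$ embeds identically as a cusp neighborhood in both $F$ and $F_0$, and the covering map $F_0\to F$ restricts to the identity on it. Extending to a maximal horocyclic cusp neighborhood shows that the full cusp of $F_0$ projects homeomorphically onto a cusp of $F$.

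The third claim is then a composition. By \cite[Cor.~5.2(3)]{DeB_Voronoi}, $p|_{\mathit{int}\,C_u}$ factors through an embedding of $\mathit{int}\,C_u/\Gamma_u$ into a cusp of $F_0$; post-composing with the homeomorphism of that cusp onto a cusp of $F$ from the previous paragraph, and using $\pi=(F_0\to F)\circ p$, yields the claimed factorization. The main obstacle is the first assertion: one has to rule out orientation-reversing stabilizers of $C_u$, which reduces to the geometric observation that glide reflections at $u$ move horocycles off themselves (whereas $C_u$ has its finite vertices pinned to the single horocycle $S$), while reflections are obstructed by torsion-freeness. Everything else is formal.
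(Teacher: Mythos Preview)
Your proof is correct and follows essentially the same line as the paper's: both show that any element of $\pi_1 F$ stabilizing $C_u$ must preserve orientation by observing that the only orientation-reversing isometries preserving the horocycle $S$ (and its ideal point $u$) are reflections, which are excluded by torsion-freeness. You classify all isometries fixing $u$ and rule out hyperbolic elements and glide reflections separately via the horocycle-scaling observation, whereas the paper directly identifies the relevant stabilizer in $\mathrm{Isom}(\mathbb{H}^2)$ as the semidirect product of horizontal translations with $\langle z\mapsto -\bar z\rangle$; you also spell out the second and third assertions in detail, which the paper simply records as consequences of the first.
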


\begin{proof}  The stabilizer of $C_u$ in the full group of isometries of $\mathbb{H}^2$ is the stabilizer of the ideal point $u$ of the horocycle $S$ in which it is inscribed.  Using the upper half-plane model and translating $u$ to $\infty$, standard results on the classification of isometries (see eg.~\cite[Th.~1.3.1]{Katok}) imply that this group is the semidirect product
$$ \left\{\left(\begin{smallmatrix} 1 & x \\ 0 & 1\end{smallmatrix}\right)\,:\, x\in\mathbb{R} \right\} \rtimes \{z\mapsto -\bar{z}\} $$
with the index-two translation subgroup preserving orientation.  One sees directly from the classification that every orientation-reversing element here reflects about a geodesic.  Since $\pi_1 F$ acts freely on $\mathbb{H}^2$, every element that stabilizes $C_u$ preserves orientation.  But $\pi_1 F_0$ consists precisely of those elements of $\pi_1 F$ that preserve orientation.  The lemma's first assertion follows directly, and the latter two follow from that one.\end{proof}

The main result of Section 5.1 of \cite{DeB_Voronoi}, Corollary 5.6, still holds if its orientability hypothesis is dropped.  This implies for us that the \textit{centered dual complex} of $\widetilde{S}$ is $\pi_1 F$-invariant and its two-cells project to $F$ homeomorphically on their interiors.  The centered dual complex is constructed in \cite[\S 2]{DeB_Voronoi}; in particular see Definition 2.26 there, and it is the object to which the main packing results Theorems 3.31 and 4.16 of \cite{DeB_Voronoi} apply.

The proof of \cite[Cor.~5.6]{DeB_Voronoi} given there in fact applies without revision in the non-orientable case.  To this end we note that the \textit{Voronoi tessellation} of $\widetilde{\cals}$ and its \textit{geometric dual} (see \cite[Th.~1.2]{DeB_Voronoi} and the exposition above it) are by their construction invariant under every isometry preserving $\widetilde{\cals}$.  And the proofs of Lemmas 5.4 and 5.5, and Corollary 5.6, do not use the hypothesis that elements of $\pi_1 F$ preserve orientation, only that they act isometrically and are fixed point-free. 

Section 5.2 of \cite{DeB_Voronoi} builds to the description of the centered dual complex plus in Proposition 5.9, its final result.  The issue here is that if $F$ has cusps then the underlying space of the centered dual decomposition is not necessarily all of $\mathbb{H}^2$, but rather the union of all geometric dual cells (which are precisely the compact cells of the Delaunay tessellation, by \cite[Remark 5.3]{DeB_Voronoi}) with possibly some horocyclic ideal triangles.  But Lemma 5.8 of \cite{DeB_Voronoi} shows that each non-compact Delaunay cell $C_u$ intersects this underlying space in a sub-union of the collection of horocyclic ideal triangles obtained by joining each of its vertices to its ideal point $u$ by a geodesic ray.  So the centered dual plus is obtained by simply adding two-cells (and their edges and ideal vertices) to the centered dual, one for each horocyclic ideal triangle  obtained from each $C_u$ as above that does not already lie in a centered dual two-cell.

The proof of \cite[Lemma 5.8]{DeB_Voronoi} again extends without revision to the non-orientable  setting, and provides the final necessary ingredient for:

\begin{proposition}\label{fivepointnine}  Proposition 5.9 of \cite{DeB_Voronoi} still holds if $F$ is not assumed oriented.\end{proposition}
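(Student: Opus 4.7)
The plan is to verify that the original proof of Proposition 5.9 of \cite{DeB_Voronoi} carries through verbatim to the non-orientable case, observing that every ingredient it uses has already been extended in the discussion above. Recall that the centered dual complex plus is obtained from the centered dual complex of $\widetilde{\cals}$ by adjoining, for each non-compact Delaunay cell $C_u$, the horocyclic ideal triangles from the sub-union provided by \cite[Lemma~5.8]{DeB_Voronoi} that do not already lie in a centered dual two-cell. The $\pi_1 F$-equivariant versions of \cite[Th.~5.1]{DeB_Voronoi}, \cite[Cor.~5.6]{DeB_Voronoi}, and \cite[Lemma~5.8]{DeB_Voronoi} established above, together with the lemma on stabilizers of non-compact Delaunay cells, provide all the inputs the original argument requires.

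First I would confirm that the resulting decomposition is $\pi_1 F$-invariant. The centered dual itself is $\pi_1 F$-invariant by the extension of \cite[Cor.~5.6]{DeB_Voronoi}, and the set of horocyclic ideal triangles added inside each $C_u$ is determined purely by the geometric data of $C_u$, hence is invariant under any isometry permuting $\widetilde{\cals}$, and in particular under $\pi_1 F$.

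Next I would verify that the two-cells of the centered dual plus project to $F$ homeomorphically on their interiors. For two-cells of the centered dual itself this is the extension of \cite[Cor.~5.6]{DeB_Voronoi}. For a newly added horocyclic ideal triangle $T$ with $\mathit{int}\,T \subset \mathit{int}\,C_u$, the lemma above shows that $\pi|_{\mathit{int}\,C_u}$ factors through an embedding of $\mathit{int}\,C_u/\Gamma_u$ into a cusp of $F$; restricting this embedding to $\mathit{int}\,T$ yields the required embedding on the interior of the added two-cell.

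The main obstacle, which is quite minor, is simply to verify that no step in the original proof of \cite[Prop.~5.9]{DeB_Voronoi} secretly uses orientability---for example in how two-cells are identified along edges in the quotient. But every ingredient has now been checked without using orientability, and the cusps of $F_0$ project homeomorphically to those of $F$ by the lemma above, so edge identifications pass to $F$ in the obvious way and the argument of \cite[Prop.~5.9]{DeB_Voronoi} applies as written.
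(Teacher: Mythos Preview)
Your proposal is correct and follows essentially the same approach as the paper: the paper does not give a separate proof for this proposition, but rather treats the preceding discussion---extending \cite[Th.~5.1]{DeB_Voronoi}, \cite[Cor.~5.2]{DeB_Voronoi} (via the stabilizer lemma), \cite[Cor.~5.6]{DeB_Voronoi}, and \cite[Lemma~5.8]{DeB_Voronoi} to the non-orientable setting---as constituting the proof. Your explicit verification of $\pi_1 F$-invariance and of interiors embedding is a faithful unpacking of what the paper leaves implicit.
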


Having established this, we turn to the first main result.

\begin{proposition}\label{Vor bound}\VorBound\end{proposition}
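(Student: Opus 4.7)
The plan is to apply the centered dual complex plus of Proposition \ref{fivepointnine} to the set $\cals = \{x_1,\ldots,x_k\}$ of disk centers, combine the cell-wise packing inequalities of \cite[Theorems 3.31, 4.16]{DeB_Voronoi}---which apply in the non-orientable setting by the discussion above---with an Euler characteristic computation, and read off the bound together with its equality conditions.

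I will first extract the decomposition's combinatorics. Writing $F_c$ for the number of compact $2$-cells with vertex counts $m_1, \dots, m_{F_c}$ and $H$ for the number of horocyclic ideal triangles, a standard count of vertices, edges, and faces in the end-compactification of $F$ (which has Euler characteristic $\chi + n$) yields
\[ \sum_{i=1}^{F_c} m_i \;=\; 2(F_c + k - \chi) - H. \]
Moreover $H \geq n$, since the centered dual plus decomposes all of $F$ and only horocyclic ideal triangles can cover a neighborhood of a cusp.

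The cited packing theorems then yield the cell-wise angle bounds $\theta_i \leq 3(m_i - 2)\alpha(r)$ on each compact $2$-cell and $\theta_j \leq 2\beta(r)$ on each horocyclic triangle, where $\theta$ denotes the interior angle sum at finite vertices. (These follow from the two-dimensional local density inequality, with equilateral and horocyclic model triangles as the extremal configurations, combined with Gauss--Bonnet applied cell by cell.) Summing, and using that the angles around each finite vertex total $2\pi$ so that $\sum_i \theta_i + \sum_j \theta_j = 2\pi k$, the display above gives
\[ 2\pi k \;\leq\; 6(k-\chi)\alpha(r) + H\bigl(2\beta(r) - 3\alpha(r)\bigr). \]
A direct check using the identity $\sin\beta(r) = 2\sin(\alpha(r)/2)$ shows $2\beta(r) < 3\alpha(r)$ for all $r > 0$, so the inequality weakens upon replacing $H$ by $n$ on the right and rearranges to
\[ \Bigl(6 - \tfrac{6\chi + 3n}{k}\Bigr)\alpha(r) + \tfrac{2n}{k}\beta(r) \;\geq\; 2\pi. \]
Since the left-hand side is strictly decreasing in $r$, this yields $r \leq r_{\chi,n}^k$. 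Equality throughout forces $H = n$ (one horocyclic triangle per cusp), each compact $2$-cell to be an equilateral triangle of side $2r_{\chi,n}^k$, and each horocyclic triangle to have compact side $2r_{\chi,n}^k$. Conversely, any such decomposition produces a packing of radius $r_{\chi,n}^k$ at its vertices, with the identification as a subdivision of the Delaunay tessellation following from the empty-circumcircles characterization.

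The main obstacle will be extracting the equality statements from \cite[Theorems 3.31, 4.16]{DeB_Voronoi} in sufficient detail to force each compact centered-dual cell in the extremal case to be a genuine equilateral triangle (rather than merely a polygon whose angle sum meets the numerical bound $3(m_i - 2)\alpha(r)$); this rigidity is what upgrades the displayed inequality to the exact structural description in the second half of the proposition.
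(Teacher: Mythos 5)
Your argument follows essentially the same route as the paper: apply the centered dual complex plus (Proposition~\ref{fivepointnine}) to the disk centers, invoke the cell-wise inequalities of \cite[Th.~3.31, 4.16]{DeB_Voronoi}, and close with an Euler characteristic count. Your angle-sum bookkeeping is the Gauss--Bonnet dual of the paper's area bookkeeping, and both arrive at the same inequality. The concern you flag at the end --- that the equality case needs the cited theorems to come with rigidity, not just a numerical bound --- is the right one to raise, and it is indeed settled in \cite{DeB_Voronoi}.

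There is, however, a genuine gap in your cell model. You assume the cells of the centered dual plus partition into compact $2$-cells and horocyclic ideal triangles, and you bound each non-compact cell's angle sum by $2\beta(r)$. But the centered dual plus can have non-compact $2$-cells with \emph{more} than three edges: the construction only \emph{adds} a horocyclic ideal triangle when it does not already sit inside a centered dual two-cell, so a centered dual cell may itself absorb one of these horocyclic pieces alongside compact geometric dual cells and thereby be non-compact with $n_i > 3$ edges. For such a cell the correct angle bound, read off from \cite[Th.~4.16]{DeB_Voronoi} via Gauss--Bonnet, is $\theta_i \leq 2\beta(r) + 3(n_i - 3)\alpha(r)$, not $\theta_i \leq 2\beta(r)$. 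Your Euler-characteristic count also silently treats every non-compact cell as a triangle. The good news is that the extra $3(n_i-3)\alpha(r)$ terms are exactly absorbed by the $\sum(n_i-2)$ bookkeeping: if you reinterpret $H$ as the total number of non-compact cells $m_0$ (rather than the number of horocyclic ideal triangles) and use the corrected cell-wise bound, the inequality $2\pi k \leq 6(k-\chi)\alpha(r) + m_0(2\beta(r) - 3\alpha(r))$ follows and the rest of your argument goes through unchanged, with equality forcing every non-compact cell to have $n_i = 3$. So the gap is fixable, but as written the step "$\theta_j \leq 2\beta(r)$ on each horocyclic triangle" together with the hidden claim that these are \emph{all} the non-compact cells does not hold.
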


\begin{proof} The first part of this proof closely tracks that of Theorem 5.11 in \cite{DeB_Voronoi}.  Given a complete, finite-area hyperbolic surface $F$ of Euler characteristic $\chi$ with $n$ cusps, equipped with an equal-radius packing by $k$ disks of radius $r$, we let $\cals$ be the set of disk centers.  Fixing a locally isometric universal cover $\pi\co \mathbb{H}^2\to F$, let $\widetilde{\cals} = \pi^{-1}(\cals)$ and, applying Proposition \ref{fivepointnine}, enumerate a complete set of representatives for the $\pi_1 F$-orbits of cells of the centered dual complex plus as $\{C_1,\hdots,C_m\}$.  By the Gauss--Bonnet theorem the $C_i$ satisfy:
$$ \mathrm{Area}(C_1) + \hdots + \mathrm{Area}(C_m) = -2\pi\chi $$
Take $C_i$ non-compact if and only if $i\leq m_0$ for some fixed $m_0\leq m$, and for each $i\leq m$ let $n_i$ be the number of edges of $C_i$.  Each compact edge of the decomposition has length at least $d = 2r$, since it has disjoint open subsegments of length $2r$ around each of its vertices.  For $i\leq m_0$ we therefore have the key area inequality obtained from \cite[Th.~4.16]{DeB_Voronoi} and some calculus:
$$\mathrm{Area}(C_i) \geq D_0(\infty,d,\infty) + (n_i-3)D_0(d,d,d),$$
with equality holding if and only if $n_i=3$ and the compact side length is $d$.  Here $D_0(\infty,d,\infty)$ is the area of a horocyclic ideal triangle with compact side length $d$, and $D_0(d,d,d)$ is the area of an equilateral hyperbolic triangle with all side lengths $d$.

For $m_0 < i \leq m$, again as described in the proof of Theorem 5.11 of \cite{DeB_Voronoi}, Theorem 3.31 there and some calculus give
$$ \mathrm{Area}(C_i) \geq (n_i-2) D_0(d,d,d), $$
with equality if and only if $n_i=3$ and all side lengths are $d$.  We therefore have:\begin{align}
  -2\pi\chi & \geq m_0 \cdot D_0(\infty,d,\infty) + \left(\sum_{i=1}^m (n_i-2) - m_0\right)\cdot D_0(d,d,d) \nonumber\\
 \label{middle} & \geq n \cdot D_0(\infty,d,\infty) + \left(\sum_{i=1}^m n_i - 2m - n\right) \cdot D_0(d,d,d) \\
    & = n\cdot (\pi-2\beta(r)) + (2e-2m - n)\cdot (\pi-3\alpha(r))\nonumber\end{align}
In moving from the first to the second line above, we have used the fact that $m_0\geq n$ and $D_0(\infty,d,\infty) > D_0(d,d,d)$ (again see the proof of \cite[Th.~5.11]{DeB_Voronoi}) to trade $m_0-n$ instances of $D_0(\infty,d,\infty)$ down for the same number of $D_0(d,d,d)$.  In moving from the second to the third we have rewritten $\sum_{i=1}^m n_i$ as $2e$, where $e$ is the total number of edges, and used the angle deficit formula for areas of hyperbolic triangles (recall from above that $d=2r$).

We will now apply an Euler characteristic identity satisfied by the closed surface $\bar{F}$ obtained from $F$ by compactifying each cusp with a unique marked point.  By Proposition \ref{fivepointnine} the $C_i$ project to faces of a cell decomposition of $\bar{F}$.  This satisfies $v-e+f = \chi(\bar{F})$, where $v$, $e$ and $f$ are the total number of vertices, edges, and faces, respectively.  For us this translates to
$$ n+k - e + m = \chi+n, $$
since $\chi(\bar{F}) = \chi+n$.  Here there are $n+k$ vertices of the centered dual plus decomposition of $\bar{F}$: $k$ that are disk centers, and $n$ that are marked points.  As above we have called the number of edges $e$, and the number of faces is $m$.  Now substituting $2k-2\chi$ for $2e-2m$ on the final line of (\ref{middle}), after simplifying we obtain
$$  \left(6 - \frac{6\chi+3n}{k}\right)\alpha(r) + \frac{2n}{k}\beta(r) \geq 2\pi  $$
Since $\alpha$ and $\beta$ are decreasing functions of $r$ we obtain that $r\leq r_{\chi,n}^k$.  

As we noted above the inequalities (\ref{middle}), equality holds on the first line if and only if $n_i=3$ for each $i$, i.e.~each $C_i$ is a triangle, and all compact sides have length $d=2r$.  This implies in particular that the compact centered dual two-cells (the $C_i$ for $i>m_0$) are all Delaunay two-cells, and the non-compact cells are obtained by dividing horocyclic Delaunay cells of $\widetilde\cals$ into horocyclic ideal triangles by rays from vertices.  As we noted below (\ref{middle}), equality holds on the second line if and only if $m_0 = n$; that is, each cusp corresponds to a unique horocyclic ideal triangle.  Thus when $r = r^{k}_{\chi,n}$, $F$ decomposes into equilateral and $n$ horocyclic ideal triangles with all compact sidelengths equal to $2r_{\chi,n}^k$.

In this case, one finds by inspecting the definition of the centered dual plus that every compact centered dual two-cell is a Delaunay triangle, and every non-compact cell is obtained from a Delaunay monogon by subdividing it by a single arc from its vertex out its cusp.  To prove the Proposition it thus only remains to show that a surface  of Euler characteristic $\chi$ and $n$ cusps which decomposes into equilateral triangles and $n$ horocyclic ideal triangles, all with compact sidelength $2r_{\chi,n}^k$, with $k$ vertices, admits a packing by $k$ disks of radius $r_{\chi,n}^k$ centered at the vertices.

This follows the line of argument from Examples 5.13 and 5.14 of \cite{DeB_Voronoi}.  Lemma 5.12 there implies in particular that an open metric disk of radius $r_{\chi,n}^k$ centered at a vertex $v$ of an equilateral triangle $T$ in $\mathbb{H}^2$ with all sidelengths $2r_{\chi,n}^k$ intersects $T$ in a full sector with angle measure equal to the interior angle of $T$ at $v$.  It is not hard to show that the same holds for a disk centered at a finite vertex of a horocyclic ideal triangle.  In both cases it is clear moreover that disks of radius $r$ centered at distinct (finite) vertices of such triangles do not intersect.  Therefore for a surface decomposed into a collection of equilateral and horocyclic ideal triangles with all compact sidelengths $2r_{\chi,n}^k$, a collection of disjoint embedded open disks of radius $r_{\chi,n}^k$ centered at the vertices of the decomposition is assembled from disk sectors in each triangle around each of its vertices.
\end{proof}

\section{Comparing bounds}\label{sharpness}

In this section we give brief accounts of two arguments that give alternative bounds on the $k$-disk packing radius: a naive bound $r_A^k$ and a bound from B\"or\"oczky's Theorem \cite{Bor_bound} that turns out to simply be $r_{\chi,0}^k$.  These arguments are not original, in particular the latter can essentially be found in \cite{Bavard} and \cite{KM}.  We will then compare the bounds in Proposition \ref{sharper}.

\begin{stuff}[The naive bound]  If $k$ disks of radius $r$ are packed in a complete finite-area surface $F$ then the sum of their areas is no more than that of $F$.  The area of a hyperbolic disk of radius $r$ is $2\pi(\cosh r - 1)$ (see eg.~\cite[Exercise 3.4(1)]{Ratcliffe}), and the Gauss--Bonnet theorem implies that the area of a complete, finite-area hyperbolic surface $F$ with Euler characteristic $\chi$ is $-2\pi\chi$.  Therefore $2\pi(\cosh r -1)\cdot k \leq -2\pi\chi$, whence $r \leq r_A^k$ defined by
\begin{align}\label{naive} \cosh \left(r_A^k\right) = 1-\frac{\chi}{k} \end{align}
\end{stuff}

\begin{stuff}[B\"or\"oczky's bound]  Suppose again that $k$ disks of radius $r$ are packed in a complete, finite-area surface $F$.  Fix a locally isometric universal cover $\pi\co\mathbb{H}^2\to F$ and consider the the preimage of the disks packed on $F$, which is a packing of $\mathbb{H}^2$ by disks of radius $r$.  Each disk $\widetilde{D}$ in the preimage determines a Voronoi $2$-cell $\widetilde{V}$ (see eg.~Section 1 of \cite{DeB_Voronoi}), and for $\alpha$ as in Theorem \ref{Vor bound} B\"or\"oczky's theorem asserts the following bound on the density of $\widetilde{D}$ in $\widetilde{V}$:
$$ \frac{\mathrm{Area}(\widetilde{D})}{\mathrm{Area}(\widetilde{V})} \leq \frac{3\alpha(r)(\cosh r -1)}{\pi-3\alpha(r)}  \quad\Rightarrow\quad  2\pi\left(\frac{\pi}{3\alpha(r)} - 1\right)\leq \mathrm{Area}(\widetilde{V})$$
We obtain the right-hand inequality above upon substituting for $\mathrm{Area}(\widetilde{D})$ and simplifying.

The packing of $\mathbb{H}^2$ by the preimage of the disks on $F$ is invariant under the action of $\pi_1 F$ by covering transformations, so this is also true of its Voronoi tessellation.  Moreover since there is a one-to-one correspondence between disks and Voronoi $2$-cells, a full set $\{\widetilde{D}_1,\hdots,\widetilde{D}_k\}$ of disk orbit representatives determines a full set $\{\widetilde{V}_1,\hdots,\widetilde{V}_k\}$ of Voronoi cell orbit representatives.  Their areas thus sum to that of $F$.  If $F$ has Euler characteristic $\chi$ then summing the right-hand inequalities above and applying the Gauss--Bonnet theorem yields
$$ 2\pi\left(\frac{\pi}{3\alpha(r)} - 1\right)\cdot k \leq -2\pi\chi \quad\Rightarrow\quad \pi \leq \left(1-\frac{\chi}{k}\right)3\alpha(r) $$
From the formula $\alpha(r) = 2\sin^{-1}(1/(2\cosh r))$, we see that $\alpha$ decreases with $r$.  Comparing with the equation defining $r_{\chi,n}^k$ in Theorem \ref{Vor bound}, we thus find that this inequality implies $r\leq r_{\chi,0}^k$.\end{stuff}

The relationship between the bounds $r_A^k$, $r_{\chi,0}^k$ and $r_{\chi,n}^k$ is perhaps not immediately clear from their formulas in all cases.  The result below clarifies this.

\begin{proposition}\label{sharper}  For any fixed $\chi<0$ and $n\geq 0$, and $k\in\mathbb{N}$, $r_{\chi,n}^k \leq r_{\chi,0}^k < r_A^k$.  The inequality $r_{\chi,n}^k\leq r_{\chi,0}^k$ is strict if and only if $n>0$.\end{proposition}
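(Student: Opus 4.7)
The plan is to verify the two inequalities separately by substituting into the defining equations.

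For the right-hand inequality $r_{\chi,0}^k < r_A^k$: the defining equation for $r_{\chi,0}^k$ simplifies to $\alpha(r_{\chi,0}^k) = \pi/(3s)$, where $s := 1 - \chi/k > 1$. Solving for $\cosh r_{\chi,0}^k$ via the formula for $\alpha$ gives $\cosh r_{\chi,0}^k = 1/(2\sin(\pi/(6s)))$, while (\ref{naive}) gives $\cosh r_A^k = s$. The desired strict inequality thus reduces to $2s\sin(\pi/(6s)) > 1$. Writing $u = \pi/(6s) \in (0, \pi/6)$, this becomes $(\sin u)/u > 3/\pi$, which holds because $(\sin u)/u$ is strictly decreasing on $(0, \pi)$ and equals $3/\pi$ exactly at $u = \pi/6$.

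For the left-hand inequality $r_{\chi,n}^k \leq r_{\chi,0}^k$, the case $n = 0$ is immediate since both radii satisfy the same equation. For $n > 0$, the heart of the argument is the pointwise trigonometric inequality
$$ 3\alpha(r) > 2\beta(r) \quad \text{for all } r > 0. $$
To prove this I would apply the triple-angle formula with $\theta = \alpha(r)/2 = \arcsin(1/(2\cosh r))$, so $\sin\theta = 1/(2\cosh r)$ and
$$ \sin(3\theta) = 3\sin\theta - 4\sin^3\theta = \frac{1}{2\cosh r}\left(3 - \frac{1}{\cosh^2 r}\right), $$
which strictly exceeds $1/\cosh r = \sin\beta(r)$ precisely when $\cosh^2 r > 1$, i.e.\ when $r > 0$. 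Since $\alpha(r) \leq \alpha(0) = \pi/3$, both $3\alpha(r)/2$ and $\beta(r)$ lie in $(0, \pi/2]$ where $\arcsin$ is strictly increasing, so this forces $3\alpha(r)/2 > \beta(r)$.

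Now rewrite the equation defining $r_{\chi,n}^k$ as
$$ \left(6 - \tfrac{6\chi}{k}\right)\alpha(r) \;-\; \tfrac{n}{k}\bigl(3\alpha(r) - 2\beta(r)\bigr) \;=\; 2\pi. $$
Evaluating the left-hand side at $r = r_{\chi,0}^k$, the first term equals $2\pi$ by definition, so the whole expression is $2\pi$ minus a strictly positive quantity (for $n > 0$). At $r = 0$ the left-hand side equals $2\pi(1 - \chi/k) > 2\pi$, so continuity and the intermediate value theorem produce a root in $(0, r_{\chi,0}^k)$; the uniqueness clause of Theorem \ref{Vor bound} identifies this root as $r_{\chi,n}^k$, yielding $r_{\chi,n}^k < r_{\chi,0}^k$. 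The only genuinely technical step is the triple-angle manipulation that converts $3\alpha > 2\beta$ into the clean polynomial inequality $3 - 1/\cosh^2 r > 2$; everything else is bookkeeping.
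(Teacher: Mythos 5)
Your proof is correct and follows essentially the same route as the paper: rewrite the defining equation to isolate the combination $3\alpha - 2\beta$, use its positivity together with monotonicity of $\alpha,\beta$ for the left-hand inequality, and use a convexity-type bound on $\alpha$ against $1/\cosh$ for the right-hand inequality. The one substantive bonus is your self-contained triple-angle proof that $3\alpha(r) > 2\beta(r)$ for $r>0$ — the paper simply cites Corollary 5.15 of \cite{DeB_Voronoi} for this — and your $\sin u / u$ reformulation of the right-hand inequality is an equivalent repackaging of the paper's appeal to convexity of $\sin^{-1}$.
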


\begin{proof}  We first compare $r_{\chi,0}^k$ with $r_{\chi,n}^k$ when $n>0$.  We will apply Corollary 5.15 of \cite{DeB_Voronoi}, which asserts for any $r>0$ that $2\beta(r) < 3\alpha(r)$.  Slightly rewriting the equation defining $r_{\chi,n}^k$ gives:
\[ 2\pi = \left(6-\frac{6\chi}{k}\right)\alpha(r_{\chi,n}^k) + \frac{n}{k}\left(2\beta(r_{\chi,n}^k)-3\alpha(r_{\chi,n}^k)\right) < \left(6-\frac{6\chi}{k}\right)\alpha(r_{\chi,n}^k) \]
Since $\alpha$ decreases with $r$, and $r_{\chi,0}^k$ is defined by setting the right side of the inequality above equal to $2\pi$, it follows that $r_{\chi,n}^k < r_{\chi,0}^k$.

We now show that $r_{\chi,0}^k < r_A^k$ for each $\chi$ and $k$.  Recall that $\alpha(r) = 2\sin^{-1}\left(1/(2\cosh r)\right)$.  The inverse sine function is concave up on $(0,1)$ and takes the value $0$ at $0$ and $\pi/6$ at $1/2$, so $\alpha(r_{\chi,0}^k) < \pi/(3\cosh r_{\chi,0}^k)$.  Plugging back into the definition of $r_{\chi,0}^k$, and comparing with that of $r_A^k$, yields the desired inequality.\end{proof}

\section{Some examples showing sharpness}\label{examples}

\noindent In this section we'll prove Theorem \ref{Vor bound attained}, which asserts that the bound $r_{\chi,n}^k$ of Proposition \ref{Vor bound} is attained under certain divisibility hypotheses.  We begin with a simple counting lemma recording the  combinatorial condition that motivates these hypotheses.

\begin{lemma}\label{i_n_j}  Suppose $F$ is a complete, orientable hyperbolic surface of finite area with Euler characteristic $\chi<0$ and $n\geq 0$ cusps that decomposes into a collection of compact and horocyclic ideal triangles that intersect pairwise (if at all) only at vertices or along entire edges, such that there are $k$ vertices and exactly $n$ horocyclic ideal triangles.  

If there exist fixed $i$ and $j$ such that each vertex of the decomposition of $F$ is the meeting point of exactly $i$ compact and $j$ horocyclic ideal triangle vertices, then $k$ divides both $n$ and $6\chi$, and
\begin{align}\label{i and j} i = 6-\frac{6\chi+3n}{k} \quad\mbox{and}\quad j = \frac{2n}{k}. \end{align}\end{lemma}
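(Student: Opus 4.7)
The plan is to translate the hypothesis into a combinatorial problem on the closed surface $\bar F$ obtained by adjoining a marked point at each cusp, turning each horocyclic ideal triangle into a compact 2-cell.  Writing $t$ for the number of compact triangles of $F$'s decomposition, this gives a cell decomposition of $\bar F$ with $k+n$ vertices and $t+n$ faces.  My strategy is to pin down the edge count using a local analysis at each cusp, compute $t$ via Euler's formula, and finish with a double count of triangle corners at vertices.

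The essential geometric step, on which the divisibility $k\mid n$ rests, is a local observation at each cusp.  Since compact triangles cannot reach a cusp, every cusp is covered by at least one horocyclic ideal triangle, and the equality between the number of horocyclic triangles and the number of cusps then forces exactly one horocyclic triangle $T_c$ at each cusp $c$.  Because $T_c$ alone fills a punctured neighborhood of $c$, the parabolic generator of the stabilizer of $c$ in $\pi_1 F$ (here I use orientability, so that this stabilizer is cyclic) must send one infinite edge of a lift of $T_c$ to the other.  Hence in $F$ those two infinite edges, and with them the two finite vertices of $T_c$, are identified, so each $T_c$ contributes two horocyclic corners to a single vertex of $F$.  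The hypothesis then forces $j$ to be even.  I expect this edge-identification step to be the main obstacle; it is not deep, but it is the only ingredient beyond bookkeeping.

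Given that, the double counts of finite-vertex/triangle-corner incidences read
\[
3t = k i \quad\text{and}\quad 2n = k j,
\]
so $j = 2n/k$, and combined with the parity above this yields $k\mid n$.  For the edge count on $\bar F$ I use that the two infinite edges of each $T_c$ descend to a single cusp edge, traversed twice in the boundary of $T_c$, while each compact triangle contributes three compact edges and each former horocyclic triangle contributes one; thus $2E = 3t + 3n$, i.e.\ $E = (3t+3n)/2$.  Plugging $V = k+n$, $E = (3t+3n)/2$, and $F = t+n$ into $V - E + F = \chi(\bar F) = \chi + n$ gives $t = 2k - 2\chi - n$, and substituting into $3t = k i$ produces $i = 6 - (6\chi+3n)/k$.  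The integrality of $i$ together with $k\mid n$ (so $k\mid 3n$) then forces $k\mid 6\chi$, completing the argument.
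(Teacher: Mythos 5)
Your proof is correct and follows essentially the same route as the paper's: pass to the compactified surface $\bar F$, use the Euler characteristic identity together with $2E=3F$ to count compact triangles, double-count triangle corners at vertices to get the formulas for $i$ and $j$, and invoke the identification of the two infinite edges of each horocyclic ideal triangle to upgrade $k\mid 2n$ to $k\mid n$. The only difference is cosmetic: you spell out the cusp-neighborhood/parabolic argument for that identification in more detail than the paper does.
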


\begin{proof}  The closed surface $\bar{F}$ obtained from $F$ by compactifying each cusp with a single point has Euler characteristic $\chi+n$, and the given decomposition determines a triangulation of $\bar{F}$ with $k+n$ vertices, where each horocyclic ideal triangle has been compactified by the addition of a single vertex at its ideal point.  Since $F$ has $n$ cusps and $n$ horocyclic ideal triangles, each horocyclic ideal triangle encloses a cusp, its non-compact edges are identified in $F$, and the quotient of these edges has one endpoint at the added vertex (which has valence one).

Noting that the numbers $e$, of edges, and $f$, of faces of the triangulation of $\bar{F}$ satisfy $2e = 3f$, computing its Euler characteristic gives:\begin{align}\label{Euler}
 v-e+f = k+n - f/2 = \chi+n \end{align}
Therefore $F$ has a total of $2(k-\chi)$ triangles, of which $2(k-\chi)-n$ are compact.  Since each compact triangle has three vertices and each horocyclic ideal triangle has two, even distribution of vertices determines the counts $i$ and $j$ above.  These imply in particular that $k$ must divide both $2n$ and $6\chi+3n$.  But we note that in fact $k$ must divide $n$, since the two vertices of each horocyclic ideal triangle are identified in $F$, and therefore $k$ must also divide $6\chi$.\end{proof}

A condition equivalent to the hypothesis of Lemma \ref{i_n_j} on a topological surface in fact ensures the existence of a hyperbolic structure satisfying the conclusion of Theorem \ref{Vor bound attained}.

\begin{proposition}\label{top to geom} For $\chi<0$ and $n\geq 0$, suppose $S$ is a closed surface with Euler characteristic $\chi+n$ and $n$ marked points that is triangulated with $k+n$ vertices, including the marked points, with the following properties:\begin{itemize}
  \item Each marked point is contained in exactly one triangle, which we also call \mbox{\rm marked}.
  \item There exist fixed $i,j\geq 0$ such that exactly $i$ non-marked and $j$ marked triangle vertices meet at each of the remaining $k$ vertices.\end{itemize}
Then there is a complete hyperbolic surface $F$ of finite area that decomposes into a collection of equilateral and exactly $n$ horocyclic ideal triangles intersecting pairwise only at vertices or along entire edges, if at all; and there is a homeomorphism $f\co S-\mathcal{P}\to F$, where $\mathcal{P}$ is the set of marked points, taking non-marked triangles to equilateral triangles and each marked triangle, less its marked vertex, to a horocyclic ideal triangle. 

$F$ is unique with this property up to isometry. That is, for any complete hyperbolic surface $F'$ and homeomorphism $f'\co S-\mathcal{P}\to F'$ satisfying the conclusion above, there is an isometry $\phi\co F'\to F$ such that $f$ is properly isotopic, preserving triangles, to $\phi\circ f'$. Also, $F$ has a packing by $k$ disks of radius $r_{\chi,n}^k$, each centered at the image of a non-marked vertex of $S$.\end{proposition}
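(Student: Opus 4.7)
The plan is to geometrize the triangulation of $S$ directly: replace each non-marked triangle with an equilateral hyperbolic triangle of sidelength $2r_{\chi,n}^k$ and each marked triangle with a horocyclic ideal triangle of compact sidelength $2r_{\chi,n}^k$ (sending the marked vertex to the ideal vertex), and identify edges as prescribed combinatorially. The hypothesis on $(i,j)$ in fact fixes their values: rerunning the counting in Lemma \ref{i_n_j} (use $v - e + f = \chi + n$ with $v = k + n$ and $2e = 3f$, so $f = 2(k-\chi)$, of which $n$ are marked, and sum vertex contributions at non-marked vertices) gives $i = 6 - (6\chi + 3n)/k$ and $j = 2n/k$. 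Comparison with the defining equation of $r_{\chi,n}^k$ then yields the crucial identity
\[ i\alpha(r_{\chi,n}^k) + j\beta(r_{\chi,n}^k) = 2\pi. \]

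\textbf{Verifying the structure.}
All compact edges of the model triangles have length $2r_{\chi,n}^k$, so each combinatorial edge identification is realized by an isometry, and the glued space is locally isometric to $\mathbb{H}^2$ off the $0$-skeleton. The displayed identity then guarantees that each non-marked vertex is a regular hyperbolic point with total cone angle $2\pi$. At a marked point, the valence-one condition forces the two non-compact edges of its horocyclic ideal triangle to be identified in $S$; in the upper half-plane model with the ideal vertex at $\infty$, these two edges are vertical rays, and any isometry identifying them is a horizontal translation, possibly composed with $z \mapsto -\bar z$ in the non-orientable case. Either extends to the standard parabolic action on a horodisk, so the identification produces a complete hyperbolic cusp. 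Thus $F$ is a complete hyperbolic surface with exactly $n$ cusps, of area $-2\pi\chi$ by Gauss--Bonnet.

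\textbf{Homeomorphism, uniqueness, and packing.}
Define $f\co S - \mathcal{P} \to F$ piecewise, as the canonical identification of each triangle of $S$ with its geometric realization. For uniqueness, note that in any alternative $(F', f')$ the compact sidelength of every triangle is forced to be $2r_{\chi,n}^k$ by the $2\pi$ angle condition at non-marked vertices together with the angle-deficit formula; since equilateral hyperbolic triangles and horocyclic ideal triangles with given compact sidelength are unique up to isometry, one obtains an isometry $\phi\co F' \to F$ by matching corresponding triangles. Compatibility along edges is automatic from the common edge length, and the isotopy $\phi \circ f' \simeq f$ preserving the triangulation (and proper at cusps) is constructed by straight-line homotopy in local hyperbolic coordinates on each compact triangle and by coning to the ideal vertex on each horocyclic one. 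The packing claim follows verbatim from the last two paragraphs of the proof of Proposition \ref{Vor bound}: the $k$ disks of radius $r_{\chi,n}^k$ centered at non-marked vertices are assembled from angle-$\alpha(r_{\chi,n}^k)$ or angle-$\beta(r_{\chi,n}^k)$ sectors in each incident triangle, and their disjointness is immediate because all compact edges have length $2r_{\chi,n}^k$.

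\textbf{Main obstacle.}
I expect the most delicate point to be the cusp analysis at marked points: ensuring that the combinatorially prescribed identification of the two non-compact edges of each horocyclic ideal triangle always produces an honest complete cusp rather than some other end behavior. The key technical observation is that any isometry of $\mathbb{H}^2$ matching two geodesic rays with a common ideal endpoint is either a parabolic translation or a reflection composite, and each such map descends from the standard horocyclic action on a cusp neighborhood; the combinatorial triangulation of $S$ dictates which to use.
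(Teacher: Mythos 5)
Your proposal is correct and follows essentially the same route as the paper: glue model equilateral and horocyclic ideal triangles of compact sidelength $2r_{\chi,n}^k$ according to the combinatorics of $S$, use the angle-sum identity $i\alpha(r_{\chi,n}^k)+j\beta(r_{\chi,n}^k)=2\pi$ to get a smooth hyperbolic structure at the vertices, verify completeness via the isometric identification of each horocyclic triangle's two ideal edges, and invoke Proposition~\ref{Vor bound} for the packing. Two small points worth tightening: the parabolic-versus-reflection dichotomy you raise for the cusp gluing is not a real choice, since once vertices are required to map to vertices the isometry between the two ideal rays is forced (and both extensions to $\mathbb{H}^2$ restrict to it, preserving horocyclic cross-sections, which is what completeness actually requires); and in the uniqueness step the fact that all compact sidelengths in $F'$ coincide comes from connectedness of the dual graph plus the strict monotonicity of $\alpha$ and $\beta$, not from the angle-deficit formula.
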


Here a \textit{triangulation} of a surface, possibly with boundary, is simply a homeomorphism to the quotient space of a finite disjoint union of triangles by homeomorphically pairing certain edges.  If the surface has boundary then not all edges must be paired.

In this section we will also prove \textit{existence} of the triangulations required in Proposition \ref{top to geom}.

\begin{proposition}\label{all triangulations}  For any $\chi< 0$ and $n\geq 0$, and any $k\in\mathbb{N}$ that divides both $6\chi$ and $n$, there is a closed non-orientable surface with Euler characteristic $\chi+n$ and $n$ marked points that is triangulated with $k+n$ vertices, including the marked points, with the following properties:\begin{itemize}
  \item Each marked point is contained in exactly one triangle, which we also call \mbox{\rm marked}.
  \item Exactly $i$ non-marked and $j$ marked triangle vertices meet at each of the remaining $k$ vertices, where $i$ and $j$ are given by (\ref{i and j}).\end{itemize}
If $\chi+n$ is even then there is also an orientable surface of Euler characteristic $\chi+n$ with $n$ marked points and such a triangulation.\end{proposition}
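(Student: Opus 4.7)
Setting $t = n/k$ and $s = -6\chi/k$, both non-negative integers by hypothesis, the target triangulation has $k$ main vertices each with $i = 6+s-3t$ non-marked triangle corners and $j = 2t$ monogon corners, $n$ valence-one marked vertices, $2k-2\chi-n$ non-marked triangles, $n$ monogons, and $3k-3\chi$ edges; the last follows from counting triangle-sides (each non-marked triangle contributes 3 sides, each monogon contributes 1 external loop-side and a self-paired radial edge).

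My plan is a direct combinatorial construction, specifying an abstract collection of non-marked triangles and monogons together with an edge-pairing. At each main vertex I prescribe an idealized ``star'' with $i$ triangle corner-slots interleaved with $t$ monogon-wedges, each wedge carrying a monogon bubble (a loop, a radial edge, and a new valence-one marked vertex at the tip). The $k$ stars then collectively present $ki = 3(2k-2\chi-n)$ outgoing triangle-corner half-edges, which I pair into $2k-2\chi-n$ non-marked triangles (each formed by three matched corners at possibly coinciding main vertices) and into $3k-3\chi-2n$ non-loop edges. To carry this out I would proceed inductively: start from a small seed---for instance a $k=1$ configuration assembled from $2(1-\chi)$ abstract triangles by standard polygon-style identifications producing a surface of the right topology---and iteratively apply local moves that add a new main vertex with its star (increasing $k$), attach a crosscap or handle (decreasing $\chi$ by 1 or 2 with corresponding vertex-valence rebalancing), or graft one more monogon-wedge at a chosen main vertex (increasing $n$ by $k$ to preserve the distribution). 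The divisibility conditions $k\mid 6\chi$ and $k\mid n$ ensure that the arithmetic of each move closes up. Orientability is controlled move by move: for the orientable case (possible only when $\chi+n$ is even) every identification is orientation-preserving, and otherwise at least one reversed identification is included.

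The main obstacle is verifying that these inductive moves exist and together produce a connected closed surface of precisely the prescribed Euler characteristic and orientation class. The Euler characteristic, $v-e+f = (k+n)-(3k-3\chi)+(2k-2\chi) = \chi+n$, and vertex valences follow immediately from the counts, but the surface property (two face-sides per edge, circular link at each vertex) and connectedness must be maintained throughout the induction. The technical crux is to show that enough local moves are available---specifically ones adjusting $k$, $\chi$, and $n$ by small controllable amounts---to reach every admissible triple $(\chi, n, k)$ from the chosen seed. I expect this to require a modest case-by-case analysis depending on the parities of $s$ and $t$ (to choose appropriate handle- versus crosscap-moves) and preparatory reconfigurations of the existing triangulation (analogous to bistellar flips) to place it in a form compatible with the next move.
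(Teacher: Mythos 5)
Your bookkeeping is correct --- the target closed surface has $2(k-\chi)-n$ non-marked triangles, $n$ marked triangles, $3(k-\chi)$ edges, and valences $i=6+s-3t$, $j=2t$ with $s=-6\chi/k$, $t=n/k$ --- but the proposal stops exactly where the proposition's content begins. The inductive scheme you sketch (``start from a small seed\ldots iteratively apply local moves'') is announced rather than carried out: the moves are never defined, no argument is given that a move adding a main vertex, a handle or crosscap, or a monogon-wedge preserves the closed-surface condition, equal triangle valence at every main vertex, connectedness, and a chosen orientability class simultaneously, and reachability of an arbitrary admissible triple $(\chi,n,k)$ from your seed is not established. Worse, a single move changing $k$ by one or $\chi$ by one generically destroys the hypotheses $k\mid 6\chi$ and $k\mid n$, so admissible moves must change several parameters in lockstep; the remark that ``the divisibility conditions\ldots ensure that the arithmetic of each move closes up'' is precisely the claim that would need a proof, not a sentence. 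As written, this is a research plan that flags its own gaps as ``the main obstacle'' and ``the technical crux,'' not an argument.

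For comparison, the paper proceeds by a one-shot gluing of explicit building blocks rather than an induction. It constructs triangulated surfaces with boundary --- annuli, holed spheres, one- and two-holed genus-$g$ pieces (Examples \ref{holed spheres}--\ref{interpolators}) and their non-orientable counterparts (Examples \ref{holed RP2}, \ref{non-orientable block}) --- each with a controlled number of vertices per boundary circle and constant triangle valence, and glues them along boundaries so that valences simply add. Lemmas \ref{orientable closed} and \ref{nonorientable closed} dispose of the $n=0$ case via a finite case analysis on $k$ modulo $12$ (resp.\ $6$), and the $n>0$ case is reduced to it by ``unzipping'' boundary edges of the closed construction and suturing in the triangulated disks $X_l$, $Y_l$, $Z_l$ of Example \ref{marked block} that carry the marked vertices. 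This sidesteps entirely the move-invariance and reachability questions that your scheme would have to confront. Your plan might ultimately be made to work, but you would need to supply concrete moves, verify the invariants they preserve, and prove reachability --- roughly the same amount of case analysis the paper does directly.
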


The main result of this section follows directly from combining Proposition \ref{top to geom} with \ref{all triangulations}.

\begin{theorem}\label{Vor bound attained}\Attained\end{theorem}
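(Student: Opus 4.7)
The plan is to derive Theorem \ref{Vor bound attained} by directly composing Propositions \ref{top to geom} and \ref{all triangulations}, since all of the substantive work is packaged in those two results.

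First I would invoke Proposition \ref{all triangulations}: given $\chi<0$, $n\geq 0$, and $k\in\mathbb{N}$ dividing both $6\chi$ and $n$, it produces a closed surface $S$ of Euler characteristic $\chi+n$, triangulated on $k+n$ vertices, $n$ of which are designated marked points each contained in a unique (marked) triangle, and such that every non-marked vertex is incident to exactly $i$ non-marked and $j$ marked triangle corners, where $i$ and $j$ are given by (\ref{i and j}). When $\chi+n$ is even, Proposition \ref{all triangulations} supplies both an orientable and a non-orientable choice for $S$; I would select whichever matches the desired conclusion.

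Next I would feed $S$ into Proposition \ref{top to geom}. This geometrizes the combinatorial data, producing a complete, finite-area hyperbolic surface $F$ together with a homeomorphism $f\co S-\mathcal{P}\to F$ (where $\mathcal{P}$ is the set of marked points) that sends each non-marked triangle to an equilateral triangle of sidelength $2r_{\chi,n}^k$ and each marked triangle less its marked vertex to a horocyclic ideal triangle of compact sidelength $2r_{\chi,n}^k$. The same proposition guarantees that $F$ admits a packing by $k$ disks of radius $r_{\chi,n}^k$ centered at the images of the non-marked vertices of $S$, which is exactly the desired packing.

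Finally I would check the bookkeeping. Since $F$ is homeomorphic to $S$ with the $n$ marked points deleted, $\chi(F) = \chi(S) - n = \chi$; the marked points are in bijection with $n$ cusps of $F$, with each marked horocyclic ideal triangle forming a cusp neighborhood under $f$; and $F$ inherits orientability or non-orientability from $S$, yielding the final clause of the statement. No obstacle arises at the combination step itself; the real work of the theorem is delegated to its two inputs, and the hard part is Proposition \ref{all triangulations}, whose purely topological construction must realize triangulations with prescribed vertex-link data and, beyond the setting of \cite{EdEwKu}, must accommodate the valence-one marked vertices that correspond to cusps.
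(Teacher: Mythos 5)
Your proposal is correct and is exactly the paper's argument: the paper states ``The main result of this section follows directly from combining Proposition \ref{top to geom} with \ref{all triangulations},'' which is precisely the composition you describe, with the same Euler-characteristic and orientability bookkeeping.
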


\subsection{Geometric surfaces from triangulations}\label{geometric} We now proceed to prove the Propositions above. This subsection gives a standard argument to prove Proposition \ref{top to geom}.

\begin{proof} Let $S$ be a closed topological surface with a collection $\mathcal{P}$ of $n$ marked points, triangulated with $k+n$ vertices satisfying the Proposition's hypotheses. Applying the Euler characteristic argument from the proof of Lemma \ref{i_n_j} with $S$ in the role of $\bar{F}$ there gives that there are $2(k-\chi)-n$ non-marked triangles, and hence that $i$ and $j$ are given by (\ref{i and j}). By definition, $S$ is a quotient space of a disjoint union of triangles by pairing edges homeomorphically. We will produce our hyperbolic surface $F$ by taking a disjoint union of equilateral and horocyclic ideal triangles in $\mathbb{H}^2$ corresponding to the triangles of $S$ and pairing their edges to match.

Number the non-marked triangles of the disjoint union giving rise to $S$ from $1$ to $m$, where $m = 2(k- \chi)-n$.  Let $T_1,\hdots,T_m$ be a collection of disjoint equilateral triangles in $\mathbb{H}^2$ with side lengths $2r_{\chi,n}^k$, and for $1\leq s\leq m$ fix a homeomorphism from $T_s$ to the non-marked triangle numbered $s$. Number the marked triangles giving rise to $S$ from $1$ to $n$, let $H_1,\hdots,H_n$ be disjoint horocyclic ideal triangles each with compact side length $2r_{\chi,n}^k$, and for each $t$ fix a homeomorphism from $H_t$ to the complement, in the $t^{\mathrm{th}}$ marked triangle, of its marked vertex.

We now form a triangulated complex $F$ as a quotient space of $\left( \bigsqcup T_s\right)\sqcup\left(\bigsqcup H_t\right)$ by identifying edges of the $T_s$ and $H_t$ in pairs. For each $s$ and $t$ such that the images of $T_s$ and $T_t$ (or $T_s$ and $H_t$, or $H_s$ and $T_t$) share an edge, identify the corresponding edges of the geometric triangles by an isometry, choosing the one that is isotopic to the homeomorphism that pairs their images in the edge-pairing of marked and non-marked triangles that has quotient $S$.  Also isometrically identify the two non-compact edges of each $H_t$.  Since $S$ is closed, and each of its marked vertices lies in a single triangle (hence also a single edge), this pairs off all edges of the $T_s$ and $H_t$ in $F$. And our choices of edge-pairings ensure that the homeomorphism from $\left( \bigsqcup T_s\right)\sqcup\left(\bigsqcup H_t\right)$ to the corresponding collection of triangles for $S$ (less $n$ vertices) can be adjusted by an isotopy to induce a homeomorphism $F\to S-\mathcal{P}$. Its inverse is the map $f$ from the Proposition's statement. 

A standard argument now shows that $F$ is a hyperbolic surface by describing a family of chart maps to $\mathbb{H}^2$ with isometric transition functions.  This argument is essentially that of, say, \cite[Th.~10.1.2]{Ratcliffe}, so we will only give a bare sketch of ideas.  The quotient map $\left( \bigsqcup T_s\right)\cup\left(\bigsqcup H_t\right)\to F$ has a well-defined inverse on the interior of each triangle, and this yields charts for points that lie outside the triangulation's one-skeleton.  For a point $p$ in the interior of an edge of intersection between the images of, say, $T_s$ and $T_s$, there is a chart that maps $p$ to its preimage $\tilde{p}$ in $T_i$.  It sends the intersection of a neighborhood of $p$ with the image of $T_s$ into an isometric translate of $T_t$ that intersects $T_s$ along the edge containing $\tilde{p}$.

Similarly, for each vertex $p$ of the triangulation, a chart around $p$ is given by choosing a preimage $\tilde{p}$ of $p$ in some triangle whose image contains $p$, then isometrically translating all other triangles whose images contain $p$ so that they have a vertex at $\tilde{p}$.  The idea is to choose these isometries so that each translate intersects the translate of the triangle before it (as their images are encountered, proceeding around the boundary of a small neighborhood $U$ of $p$ in $F$) in an edge containing $\tilde{p}$.  It is key that by the definition of $r_{\chi,n}^k$ in Proposition \ref{Vor bound} we have
$$ \left(6-\frac{6\chi+3n}{k}\right)\alpha(r_{\chi,n}^k) + \frac{2n}{k}\beta(r_{\chi,n}^k) = 2\pi. $$
And since $p$ lies in $6-\frac{6\chi+3n}{k}$ equilateral triangle vertices and $2n/k$ horocyclic ideal triangle vertices by hypothesis, upon proceeding all the way around the boundary of $U$ we find that the union of isometric translates entirely encloses a neighborhood of $\tilde{p}$ in $\mathbb{H}^2$ that is isometric to $U$.

For each horocyclic ideal triangle $H_t$, the isometry that identifies the two non-compact edges of $H_t$ is a parabolic fixing its ideal point $u$.  This implies that each cross-section of $H_t$ by a horocycle with ideal point $u$ has its endpoints identified, so Theorem 11.1.4 of \cite{Ratcliffe} implies that $F$ is complete. Proposition \ref{Vor bound} now implies that $F$ has a packing by $k$ disks of radius $r_{\chi,n}^k$ centered at the vertices of the triangulation of $F$.  

Any two equilateral triangles in $\mathbb{H}^2$ with the same side length are isometric, and the full combinatorial symmetry group of any equilateral triangle is realized by isometries (these facts are standard). Analogously, two horocyclic ideal triangles with the same compact side length are isometric (this is easy to prove bare hands, or cf.~\cite[Prop.~3.4]{DeB_cyclic}), and every one has a reflection exchanging its two vertices in $\mathbb{H}^2$. It follows that if $f'\co S-\mathcal{P}\to F'$ has the same properties as $F$ then an isometry $\phi\co F'\to F$ can be defined triangle-by-triangle so that $f$ and $\phi\circ f'$ take each vertex, edge, and triangle of $S$ to identical corresponding objects in $F$, and that their restrictions to any edge are properly isotopic through maps to an edge of $F$. Adjusting $\phi\circ f'$ further on each triangle yields a proper isotopy to $f$.\end{proof}

\subsection{Constructing triangulated surfaces}\label{constrxn}  In this subsection we construct surfaces with prescribed triangulations to prove Proposition \ref{all triangulations}. We will treat the orientable and non-orientable cases separately, and it will be useful at times to think in terms of genus rather than Euler characteristic.  Here the \textit{genus} of an orientable (or, respectively, non-orientable) closed surface is the number of summands in a decomposition as a connected sum of tori (resp.~projective planes).  We declare the genus of a compact surface with boundary to be that of the closed surface obtained by adjoining a disk to each boundary component.  We now recall the fundamental relationship between the genus, number of boundary components, and Euler characteristic:\begin{align}\label{g vs x vs b}
	& \chi(F_{\mathit{or}}) = 2 - 2g_{\mathit{or}} - b & & \chi(F_{\mathit{non}}) = 2 - g_{\mathit{non}} - b \end{align}
On the left side above, $F_{\mathit{or}}$ is a compact, orientable surface of genus $g_{\mathit{or}}\geq 0$, and on the right, $F_{\mathit{non}}$ is non-orientable of genus $g_{\mathit{non}}\geq 1$, with $b\geq 0$ boundary components in each case.

In proving Proposition \ref{all triangulations}, we will find it convenient to track the ``triangle valence" of vertices.

\begin{notation}  We take the \textit{triangle valence} of a vertex $v$ of a triangulated surface to be the number of triangle vertices identified at $v$.\end{notation}

Since the link of a vertex $v$ in a closed triangulated surface is a circle, the triangle valence of $v$ coincides with its \textit{valence} as usually defined: the number of edge endpoints at $v$.  However for a vertex on the boundary of a triangulated surface with boundary, the triangle valence is one less than the valence.  It is convenient to track triangle valence since it is additive under the operation of identifying surfaces with boundary along their boundaries.

The proof is a bit lengthy and technical, though completely elementary, so before embarking on it we give an overview.  We build every triangulated surface by identifying boundary components in pairs from a fixed collection of ``building blocks'' constructed in a sequence of Examples.  Each building block is a compact surface with boundary which is triangulated with all non-marked vertices on the boundary, an equal number of vertices per boundary component, and all (non-marked) vertices of equal valence. To give an idea of what we will construct, we have collected data on our orientable building blocks without marked vertices in Table \ref{orientable props}.

\begin{table}[ht]
\begin{tabular}{c | l l l l l l l}
  \# vertices per  & $\Sigma_{0,2}$ & $\Sigma_{0,3}$ & $\Sigma_{0,4}$ & $\Sigma_{0,6}$ & 
	$\Sigma_{g,1},\ g\ge 1$ & $\Sigma_{g,2},\ g\ge 1$ & $\Sigma_{1,b}$ \\
  $\partial$-component & \multicolumn{4}{c}{(Example \ref{holed spheres})} & 
	\multicolumn{1}{c}{(\ref{one-hole block})} & \multicolumn{1}{c}{(\ref{two-hole block})} &
	(\ref{interpolators}) \\ \hline
  $1$ & $3$ & & & & $12g-3$ & $6g+3$ & $9$  \\
  $2$ & $3$ & $4$ & & $5$ & $6g$ & $3g+3$ & $6$ \\
  $3$ & $3$ & & $4$ & & $4g+1$ & $2g+3$ & $5$ \\ 
  \multicolumn{8}{c}{} \end{tabular}
\caption{Triangle valence of the orientable building blocks.}
\label{orientable props}
\end{table}

In the Table, columns correspond to the triangulated building blocks $\Sigma_{g,b}$ (of orientable genus $g$ with $b$ boundary components), rows to the number of vertices per boundary component, and table entries to the triangle valence of each vertex.  And the number in parentheses directly below each $\Sigma_{g,b}$ refers to the Example where it is triangulated.

We use these building blocks in Lemma \ref{orientable closed} to prove the orientable closed (i.e.~without marked vertices) case of Proposition \ref{all triangulations}. We then proceed to the non-orientable closed case, in Lemma \ref{nonorientable closed}, after adding a few non-orientable building blocks to the mix in Examples \ref{holed RP2} and \ref{non-orientable block}. Each Lemma's proof has several cases, featuring different combinations of building blocks, determined by certain divisibility conditions on the total number of vertices $k$.

As we remarked in the introduction, the closed case of Proposition \ref{all triangulations} is the $p=3$ case of the main theorem of Edmonds--Ewing--Kulkarni \cite{EdEwKu}, which is proved by a different method involving branched covers.  The advantage of our proof is that each closed surface constructed in Lemmas \ref{orientable closed} and \ref{nonorientable closed} has a collection of disjoint simple closed curves, coming from the building blocks' boundaries, which are unions of edges and whose union contains every vertex. This allows us to extend to the case of $n>0$ marked vertices (a case not covered in \cite{EdEwKu}) by ``unzipping'' each edge in each such curve and inserting a copy of a final building block constructed in Example \ref{marked block}, homeomorphic to a disk, with marked vertices in its interior. We handle this case below that Example, completing the proof of Proposition \ref{all triangulations}.

\begin{example}\label{holed spheres}  It is a simple exercise to show that an annulus $\Sigma_{0,2}$ can be triangulated with one, two, or three vertices on each boundary component, and each such triangulation can be arranged so that each vertex has triangle valence three.  We will also use three- and six-holed spheres $\Sigma_{0,3}$ and $\Sigma_{0,6}$ triangulated with two vertices per boundary component, and a four-holed sphere $\Sigma_{0,4}$ with three vertices per boundary component.  The three- and four-holed spheres are pictured in Figure \ref{holed sphere} on the left and in the middle, respectively.  Inspection of each reveals that each vertex has triangle valence four.

The right side of Figure \ref{holed sphere} shows a triangulated dodecagon, two copies of which are identified along alternating edges to produce a triangulated copy of $\Sigma_{0,6}$.  Precisely, for each even $i$ we identify the edge $e_i$ in one copy homeomorphically with $e_{i+6}$ in the other so that for each $i$, $v_i = e_i\cap e_{i+1}$ in the first copy is identified with $v_{i+6}$ in the other.  Note that for each $i \cong0$ modulo four, the vertex $v_i$ is contained in $4$ triangle vertices of the dodecagon, whereas $v_i$ is in $3$, $1$, or $2$ vertices for $i\cong1$, $2$, or $3$, respectively.  Thus in the quotient six-holed sphere $\Sigma_{0,6}$, each vertex has triangle valence $5$.  Each boundary component is the union of two copies of $e_i$ along their endpoints for some odd $i$ and so contains two vertex quotients, of $v_{i-1}$ and $v_i$.\end{example}

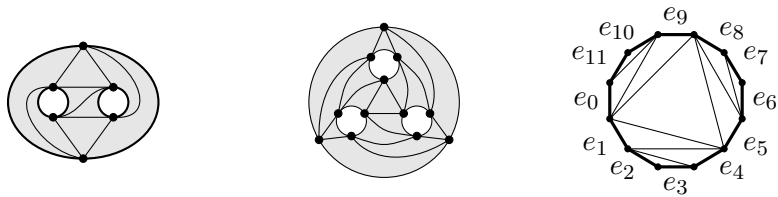
\begin{figure}
\begin{tikzpicture}

\begin{scope}[xshift=-4cm,scale=0.5]
	\draw [fill, opacity=0.1] (0,0) ellipse (2cm and 1.5cm);
	\draw [thick] (0,0) ellipse (2cm and 1.5cm);
	\draw [fill, color=white] (0.8,0) circle (0.4cm);
	\draw [thick] (0.8,0) circle (0.4cm);
	\draw [fill, color=white] (-0.8,0) circle (0.4cm);
	\draw [thick] (-0.8,0) circle (0.4cm);
	
	\draw [fill] (0,1.5) circle (0.1cm);
	\draw [fill] (0,-1.5) circle (0.1cm);
	\draw [fill] (0.8,0.4) circle (0.1cm);
	\draw [fill] (0.8,-0.4) circle (0.1cm);
	\draw [fill] (-0.8,0.4) circle (0.1cm);
	\draw [fill] (-0.8,-0.4) circle (0.1cm);
	
	\draw (-0.8,0.4) -- (0.8,0.4) -- (0,1.5) -- cycle;
	\draw (-0.8,-0.4) -- (0.8,-0.4) -- (0,-1.5) -- cycle;
	\draw (-0.8,-0.4) .. controls (0,-0.4) and (0,0.4) .. (0.8,0.4);
	\draw (0.8,-0.4) .. controls (2.2,-0.5) and (1.3,1.3) .. (0,1.5);
	\draw (-0.8,0.4) .. controls (-2.2,0.5) and (-1.3,-1.3) .. (0,-1.5);
\end{scope}

\begin{scope}[scale=0.5]
	\draw [fill, opacity=0.1] (0,0) circle [radius=2];
	\draw (0,0) circle [radius=2];
	\draw [fill] (0,2) circle [radius=0.1];
	\draw [fill] (-1.73,-1) circle [radius=0.1];
	\draw [fill] (1.73,-1) circle [radius=0.1];

	\draw [fill, color=white] (0,1) circle [radius=0.4];
	\draw (0,1) circle [radius=0.4];
	\draw [fill] (0,0.6) circle [radius=0.1];
	\draw [fill] (0.35,1.2) circle [radius=0.1];
	\draw [fill] (-0.35,1.2) circle [radius=0.1];

\draw [fill, color=white] (-0.87,-0.5) circle [radius=0.4];
\draw (-0.87,-0.5) circle [radius=0.4];
\draw [fill] (-0.87,-0.9) circle [radius=0.1];
\draw [fill] (-0.52,-0.3) circle [radius=0.1];
\draw [fill] (-1.22,-0.3) circle [radius=0.1];

\draw [fill, color=white] (0.87,-0.5) circle [radius=0.4];
\draw (0.87,-0.5) circle [radius=0.4];
\draw [fill] (0.87,-0.9) circle [radius=0.1];
\draw [fill] (0.52,-0.3) circle [radius=0.1];
\draw [fill] (1.22,-0.3) circle [radius=0.1];

\draw (-0.52,-0.3) -- (0.52,-0.3) -- (0,0.6) -- cycle;

\draw (-0.35,1.2) -- (0,2) -- (0.35,1.2);
\draw (-1.22,-0.3) -- (-1.73,-1) -- (-0.87,-0.9);
\draw (1.22,-0.3) -- (1.73,-1) -- (0.87,-0.9);

\draw (1.22,-0.3) .. controls (1.04,0.6) .. (0.35,1.2);
\draw (-1.22,-0.3) .. controls (-1.04,0.6) .. (-0.35,1.2);
\draw (-0.87,-0.9) .. controls (0,-1.2) .. (0.87,-0.9);

\draw (1.22,-0.3) .. controls (1.5,0.3) and (1.3,1.3) .. (0,2);
\draw (-0.35,1.2) .. controls (-1.01,1.15) and (-1.78,0.48) .. (-1.73,-1);
\draw (-0.87,-0.9) .. controls (-0.49,-1.45) and (0.48,-1.78) .. (1.73,-1);

\draw (-0.52,-0.3) .. controls (0,-0.8) .. (0.87,-0.9);
\draw (0.52,-0.3) .. controls (0.69,0.4) .. (0.35,1.2);
\draw (0,0.6) .. controls (-0.69,0.4) .. (-1.22,-0.3);
\end{scope}

\begin{scope}[xshift=3cm,yshift=0.9cm,scale=0.4]
    \draw [very thick] (0,-1.6) -- (0,-2.8) -- (0.6,-3.8) -- (1.6,-4.4) -- (2.8,-4.4) -- (3.8,-3.8) -- (4.4,-2.8);
    \draw [very thick] (4.4,-2.8) -- (4.4,-1.6) -- (3.8,-0.6) -- (2.8,0) -- (1.6,0) -- (0.6,-0.6) -- (0,-1.6);
    \draw [fill] (0,-1.6) circle (0.1cm);
    \draw [fill] (0,-2.8) circle (0.1cm);
    \draw [fill] (1.6,-4.4) circle (0.1cm);
    \draw [fill] (3.8,-3.8) circle (0.1cm);
    \draw [fill] (3.8,-0.6) circle (0.1cm);
    \draw [fill] (1.6,0) circle (0.1cm);
    \draw [fill] (0.6,-3.8) circle [radius=0.1];
    \draw [fill] (2.8,-4.4) circle [radius=0.1];
    \draw [fill] (4.4,-2.8) circle [radius=0.1];
    \draw [fill] (4.4,-1.6) circle [radius=0.1];
    \draw [fill] (2.8,0) circle [radius=0.1];
    \draw [fill] (0.6,-0.6) circle [radius=0.1];
    
    \draw (0,-2.8) -- (3.8,-3.8) -- (0.6,-3.8) -- (2.8,-4.4);
    \draw (3.8,-3.8) -- (2.8,0) -- (4.4,-2.8) -- (3.8,-0.6);
    \draw (2.8,0) -- (0,-2.8) -- (1.6,0) -- (0,-1.6);
    
    \node [left] at (0,-2.2) {$e_0$};
    \node [below left] at (0.3,-3.1) {$e_1$};
    \node [below left] at (1.2,-3.9) {$e_2$};
    \node [below] at (2.2,-4.4) {$e_3$};
    \node [below right] at (3.3,-3.9) {$e_4$};
    \node [below right] at (4.1,-3.1) {$e_5$};
    \node [right] at (4.4,-2.2) {$e_6$};
    \node [above right] at (4.1,-1.3) {$e_7$};
    \node [above right] at (3.3,-0.5) {$e_8$};
    \node [above] at (2.2,0) {$e_9$};
    \node [above left] at (1.2,-0.5) {$e_{10}$};
    \node [above left] at (0.3,-1.3) {$e_{11}$};
\end{scope}

\end{tikzpicture}
\caption{The triangulated three-,  four-, and (half the) six-holed sphere, left-to-right.}
\label{holed sphere}
\end{figure}

\begin{example}\label{one-hole block}  Here we will triangulate the orientable genus-$g$ surface $\Sigma_{g,1}$ with one boundary component, for $g\geq 1$.  In fact we describe triangulations with one, two, and three vertices, all on the boundary and all with the same valence.

The standard construction of the closed, orientable genus-$g$ surface takes a $4g$-gon $P_{4g}$ with edges labeled $e_0,\hdots,e_{4g-1}$ in counterclockwise order, and identifies $e_i$ to $e_{i+2}$ via an orientation-reversing homeomorphism for each $i<4g$ congruent to $0$ or $1$ modulo $4$.  (Here the $e_i$ inherit their orientation from the counterclockwise orientation on $\partial P_{4g}$.)  We produce $\Sigma_{g,1}$ by identifying the first $4g$ edges of a $(4g+1)$-gon $P_{4g+1}$ in the same way, and making no nontrivial identifications on points in the interior of the final edge.  Any triangulation of $P_{4g+1}$ projects to a one-vertex triangulation of $\Sigma_{g,1}$ with its single vertex $v$ on $\partial \Sigma_{g,1}$.  Such a triangulation has $4g-1$ triangles, so $v$ has triangle valence $12g-3$.  The case $g=1$ is pictured on the left in Figure \ref{one-hole fig}.

\newcommand\pentagon{
\draw [directed] (0,-1) -- (0.9,-1.3);
\draw [ddirected] (0.9,-1.3) -- (1.5,-0.5);
\draw [reverse directed] (1.5,-0.5) -- (0.9,0.3);
\draw [reverse ddirected] (0.9,0.3) -- (0,0);
\draw [fill] (0,0) circle [radius=0.06];
\draw [fill] (0,-1) circle [radius=0.06];
\draw [fill] (0.9,-1.3) circle [radius=0.06];
\draw [fill] (1.5,-0.5) circle [radius=0.06];
\draw [fill] (0.9,0.3) circle [radius=0.06]; }

\begin{figure}[ht]
\begin{tikzpicture}

\begin{scope}[xshift=-5cm]
\pentagon
\draw [thick] (0,0) -- (0,-1);
\draw (0,0) -- (1.5,-0.5) -- (0,-1);
\node [below left] at (0.5,-1.1) {$e_0$};
\node [below right] at (1.1,-0.8) {$e_1$};
\node [above right] at (1.2,-0.1) {$e_2$};
\node [above left] at (0.5,0.1) {$e_3$};
\end{scope}

\begin{scope}
\pentagon
\draw [thick] (0,0) -- (0.2,-0.5) -- (0,-1);
\draw (0.9,0.3) -- (0.2,-0.5) -- (1.5,-0.5);
\draw (0.2,-0.5) -- (0.9,-1.3);
\draw (0.2,-0.5) circle [radius=0.06];
\end{scope}

\begin{scope}[xshift=5cm]
\pentagon
\draw [thick] (0,0) -- (0.2,-0.3) -- (0.2,-0.7) -- (0,-1);
\draw (0.9,0.3) -- (0.2,-0.3) -- (1.5,-0.5) -- (0.2,-0.7) -- (0.9,-1.3);
\draw (0.2,-0.3) circle [radius=0.06];
\draw (0.2,-0.7) circle [radius=0.06];
\end{scope}

\end{tikzpicture}
\caption{Triangulations of the one-holed torus with one, two or three vertices.}
\label{one-hole fig}
\end{figure}
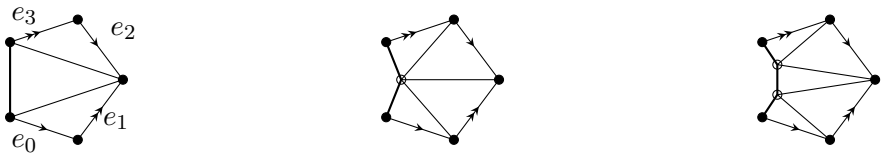

We may construct two- or three-vertex triangulations of $\Sigma_{g,1}$, each with all vertices on the boundary, by inserting one or two vertices, respectively, in the interior of the non-identified edge $e_{4g}$ of $P_{4g+1}$.  If one vertex is inserted then we begin by joining it to each vertex $e_{i-1}\cap e_{i}$ for $1\leq i \leq 4g-1$.  If two are inserted then the nearer one to $e_0\cap e_{4g}$ is joined to $e_{i-1}\cap e_i$ for $1\leq i \leq 2g$, and the other one is joined to $e_{i-1}\cap e_{i}$ for $2g\leq i \leq 4g-1$.  See the middle and right side of Figure \ref{one-hole fig} for the case $g=1$.  Note that in the resulting triangulations of $\Sigma_{g,1}$, the inserted vertices have lower valence than the quotient vertex $v$ of those of $P_{4g+1}$.  

We can even out the valence by \textit{flipping} edges.  An edge $e$ that is the intersection of distinct triangles $T$ and $T'$ of a triangulated surface is flipped by replacing it with the other diagonal of the quadrilateral $T\cup T'$.  This yields a new triangulation in which each endpoint of $e$ has its (triangle) valence reduced by by one, and each vertex opposite $e$ has it increased by one.  In $\Sigma_{g,1}$, triangulated as prescribed in the paragraph above, the projection of each $e_i$ begins and ends at $v$, and each vertex opposite $e_i$ is an inserted vertex.  So flipping $e_i$ reduces the valence of $v$ and increases the valence of the inserted vertex, each by $2$

In the the two-vertex triangulation of $\Sigma_{g,1}$ described above, the inserted vertex has triangle valence $4g$.  Since there are a total of $4g$ triangles there are $12g$ triangle vertices total.  So after flipping $e_i$ for $g$ distinct $i$, each vertex of $A_{4g}$ has triangle valence $6g$.  In the three-vertex triangulation, each inserted vertex has triangle valence $2g+1$, and there are $3(4g+1)$ triangle vertices total.  So after flipping all $2g$ distinct $e_i$, all vertices have triangle valence $4g+1$.
\end{example}

\begin{example}\label{two-hole block} For each $g\geq 1$ we now triangulate the two-holed orientable surface $\Sigma_{g,2}$ of genus $g$ with two boundary components.  Each triangulation will have all vertices on the boundary, each boundary component will have the same number of vertices (either one, two, or three) and each vertex will have the same valence.  We construct $\Sigma_{g,2}$ by identifying all but two of the edges of a $4n$-gon $P_{4n}$ in pairs, where $n = g+1$.

Label the edges of $P_{4n}$ as $e_0,\hdots,e_{4n-1}$ in counterclockwise order, and for each $k\neq 0,2n$ identify $e_k$ with its diametrically opposite edge $e_{k+2n}$ by an orientation-reversing homeomorphism.  The edge orientations in question here are inherited from the boundary orientation on $\partial P_{4n}$.  So the initial vertex $v_1 = e_0\cap e_1$ of $e_1$ is identified with the terminal vertex $v_{2n+2}$ of $e_{2n+1}$.  Since $v_{2n+2}$ is the initial vertex of $e_{2n+2}$ it is also identified with the terminal vertex $v_3$ of $e_2$ and so on, so that in the end all vertices $v_i$ for odd $i<2n$ are identified with all vertices $v_j$ for even $j > 2n$.  In particular the endpoints $v_0 = v_{4n}$ and $v_1$ of $e_0$ are identified in the quotient.

Similarly, for all even $i$ with $0<i\leq 2n$, the vertices $v_i$ are identified together with the vertices $v_j$ for all odd $j > 2n$; and in particular the endpoints of $e_{2n}$ are identified in the quotient.  The quotient $\Sigma_{g,2}$ by these identifications is thus a surface with two boundary components, one from $e_0$ and one from $e_{2n}$, each containing one of the two equivalence classes of vertices of $P_{4n}$.  Note that the $180$-degree rotation of $P_{4n}$ preserves the identifications and so induces an automorphism $\rho$ of $\Sigma_{g,2}$ that exchanges its two boundary components and the vertex quotients they contain.

\newcommand{\oct}[2]{
  \begin{scope}[xshift=#1cm,yshift=#2cm,scale=0.9]
  \draw [very thick] (0,-3.4) -- (1.4,-4.8) -- (3.4,-4.8) -- (4.8,-3.4);
  \draw [very thick] (4.8,-1.4) -- (3.4,0) -- (1.4,0) -- (0,-1.4);
  \draw [fill] (-.1,-1.5) rectangle (.1,-1.3);
  \draw [fill] (-.1,-3.5) rectangle (.1,-3.3);
  \draw [fill] (3.3,-.1) rectangle (3.5,.1);
  \draw [fill] (3.3,-4.9) rectangle (3.5,-4.7);
  \draw [fill] (1.4,0) circle [radius=0.1];
  \draw [fill] (1.4,-4.8) circle [radius=0.1];
  \draw [fill] (4.8,-1.4) circle [radius=0.1];
  \draw [fill] (4.8,-3.4) circle [radius=0.1];
  \end{scope}
}

\newcommand{\dodec}[2]{
  \begin{scope}[xshift=#1cm,yshift=#2cm]
    \draw [very thick] (0,-2.8) -- (0.6,-3.8) -- (1.6,-4.4) -- (2.8,-4.4) -- (3.8,-3.8) -- (4.4,-2.8);
    \draw [very thick] (4.4,-1.6) -- (3.8,-0.6) -- (2.8,0) -- (1.6,0) -- (0.6,-0.6) -- (0,-1.6);
    \draw [fill] (-0.1,-1.7) rectangle (0.1,-1.5);
    \draw [fill] (-0.1,-2.9) rectangle (0.1,-2.7);
    \draw [fill] (1.5,-4.5) rectangle (1.7,-4.3);
    \draw [fill] (3.7,-3.9) rectangle (3.9,-3.7);
    \draw [fill] (3.7,-0.7) rectangle (3.9,-0.5);
    \draw [fill] (1.5,-0.1) rectangle (1.7,0.1);
    \draw [fill] (0.6,-3.8) circle [radius=0.1];
    \draw [fill] (2.8,-4.4) circle [radius=0.1];
    \draw [fill] (4.4,-2.8) circle [radius=0.1];
    \draw [fill] (4.4,-1.6) circle [radius=0.1];
    \draw [fill] (2.8,0) circle [radius=0.1];
    \draw [fill] (0.6,-0.6) circle [radius=0.1];
  \end{scope}
}

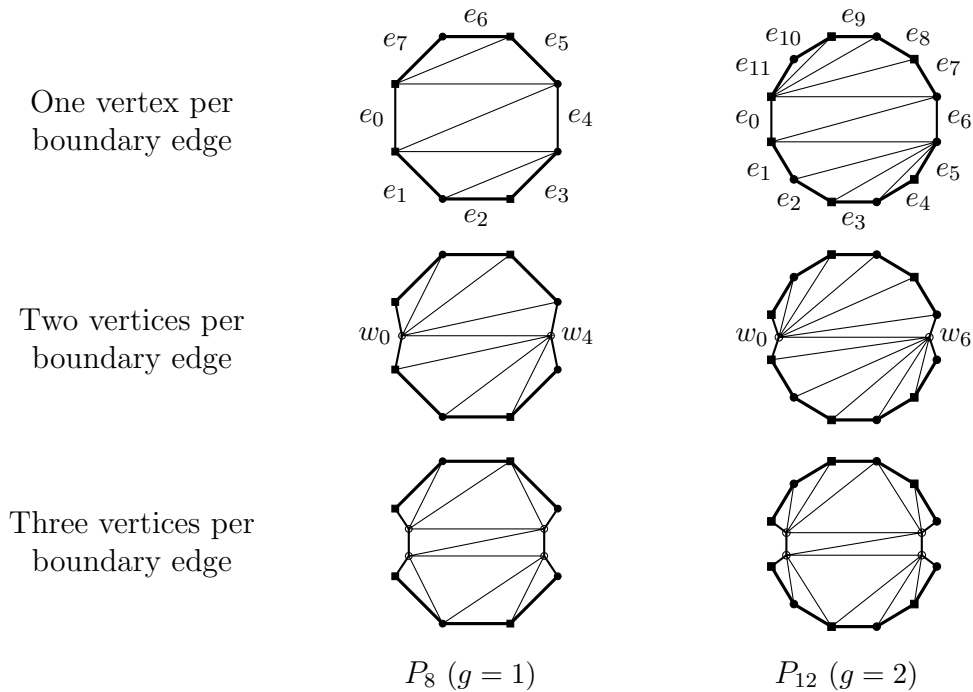
\begin{figure}
\begin{tikzpicture}[scale=0.5]

\node at (-7,-1.8) {\large One vertex per};
\node at (-7,-2.8) {\large boundary edge};

  \oct{0}{0};
  \begin{scope}[xshift=0cm,yshift=0cm,scale=0.9]
    \draw [thick] (0,-1.4) -- (0,-3.4);
    \draw [thick] (4.8,-3.4) -- (4.8,-1.4);
    \draw (3.4,0) -- (0,-1.4) -- (4.8,-1.4) -- (0,-3.4) -- (4.8,-3.4) -- (1.4,-4.8);
    \node [left] at (0,-2.4) {$e_0$};
    \node [below left] at (0.7,-4.1) {$e_1$};
    \node [below] at (2.4,-4.8) {$e_2$};
    \node [below right] at (4.1,-4.1) {$e_3$};
    \node [right] at (4.8,-2.4) {$e_4$};
    \node [above right] at (4.1,-0.7) {$e_5$};
    \node [above] at (2.4,0) {$e_6$};
    \node [above left] at (0.7,-0.7) {$e_7$};
  \end{scope}
  
  \dodec{10}{0};
  \begin{scope}[xshift=10cm,yshift=0cm]
    \draw [thick] (0,-1.6) -- (0,-2.8);
    \draw [thick] (4.4,-2.8) -- (4.4,-1.6);
    \draw (1.6,0) -- (0,-1.6) -- (2.8,0);
    \draw (3.8,-0.6) -- (0,-1.6) -- (4.4,-1.6) -- (0,-2.8) -- (4.4,-2.8) -- (0.6,-3.8);
    \draw (1.6,-4.4) -- (4.4,-2.8) -- (2.8,-4.4);
    \node [left] at (0,-2.2) {$e_0$};
    \node [below left] at (0.3,-3.1) {$e_1$};
    \node [below left] at (1.1,-3.9) {$e_2$};
    \node [below] at (2.2,-4.4) {$e_3$};
    \node [below right] at (3.3,-3.9) {$e_4$};
    \node [below right] at (4.1,-3.1) {$e_5$};
    \node [right] at (4.4,-2.2) {$e_6$};
    \node [above right] at (4.1,-1.3) {$e_7$};
    \node [above right] at (3.3,-0.5) {$e_8$};
    \node [above] at (2.2,0) {$e_9$};
    \node [above left] at (1.1,-0.5) {$e_{10}$};
    \node [above left] at (0.3,-1.3) {$e_{11}$};
  \end{scope}

\node at (-7,-7.6) {\large Two vertices per};
\node at (-7,-8.6) {\large boundary edge};

  \oct{0}{-5.8};
  \begin{scope}[xshift=0cm,yshift=-5.8cm,scale=0.9]
    \draw [thick] (0,-1.4) -- (0.2,-2.4) -- (0,-3.4);
    \draw [thick] (4.8,-3.4) -- (4.6,-2.4) -- (4.8,-1.4);
    \draw (0.2,-2.4) circle [radius=0.1];
    \node [left] at (0.2,-2.4) {$w_0$};
    \draw (4.6,-2.4) circle [radius=0.1];
    \node [right] at (4.6,-2.4) {$w_4$};
    \draw (3.4,0) -- (0.2,-2.4) -- (1.4,0);
    \draw (4.8,-1.4) -- (0.2,-2.4) -- (4.6,-2.4) -- (0,-3.4);
    \draw (1.4,-4.8) -- (4.6,-2.4) -- (3.4,-4.8);
  \end{scope}

  \dodec{10}{-5.8};
  \begin{scope}[xshift=10cm,yshift=-5.8cm]
    \draw [thick] (0,-1.6) -- (0.2,-2.2) -- (0,-2.8);
    \draw [thick] (4.4,-2.8) -- (4.2,-2.2) -- (4.4,-1.6);
    \draw (0.2,-2.2) circle [radius=0.1];
    \node [left] at (0.2,-2.2) {$w_0$};
    \draw (4.2,-2.2) circle [radius=0.1];
    \node [right] at (4.2,-2.2) {$w_6$};
    \draw (0.6,-0.6) -- (0.2,-2.2) -- (1.6,0);
    \draw (2.8,0) -- (0.2,-2.2) -- (3.8,-0.6);
    \draw (4.4,-1.6) -- (0.2,-2.2) -- (4.2,-2.2) -- (0,-2.8);
    \draw (0.6,-3.8) -- (4.2,-2.2) -- (1.6,-4.4);
    \draw (2.8,-4.4) -- (4.2,-2.2) -- (3.8,-3.8);
  \end{scope}

\node at (-7,-13) {\large Three vertices per};
\node at (-7,-14) {\large boundary edge};

  \oct{0}{-11.3};
  \begin{scope}[xshift=0cm,yshift=-11.3cm,scale=0.9]
    \draw [thick] (0,-1.4) -- (0.4,-2) -- (0.4,-2.8) -- (0,-3.4);
    \draw [thick] (4.8,-1.4) -- (4.4,-2) -- (4.4,-2.8) -- (4.8,-3.4);
    \draw (0.4,-2) circle [radius=0.1];
    \draw (0.4,-2.8) circle [radius=0.1];
    \draw (4.4,-2) circle [radius=0.1];
    \draw (4.4,-2.8) circle [radius=0.1];
    \draw (1.4,0) -- (0.4,-2) -- (3.4,0) -- (4.4,-2);
    \draw (0.4,-2) -- (4.4,-2) -- (0.4,-2.8) -- (4.4,-2.8) -- (3.4,-4.8);
    \draw (4.4,-2.8) -- (1.4,-4.8) -- (0.4,-2.8);
  \end{scope}

  \dodec{10}{-11.3};
  \begin{scope}[xshift=10cm,yshift=-11.3cm]
    \draw [thick] (0,-1.6) -- (0.4,-1.9) -- (0.4,-2.5) -- (0,-2.8);
    \draw [thick] (4.4,-1.6) -- (4,-1.9) -- (4,-2.5) -- (4.4,-2.8);
    \draw (0.4,-1.9) circle [radius=0.1];
    \draw (0.4,-2.5) circle [radius=0.1];
    \draw (4,-1.9) circle [radius=0.1];
    \draw (4,-2.5) circle [radius=0.1];
    \draw (0.6,-0.6) -- (0.4,-1.9) -- (1.6,0);
    \draw (2.8,0) -- (4,-1.9) -- (3.8,-0.6);
    \draw (2.8,0) -- (0.4,-1.9) -- (4,-1.9) -- (0.4,-2.5) -- (4,-2.5) -- (1.6,-4.4);
    \draw (0.6,-3.8) -- (0.4,-2.5) -- (1.6,-4.4);
    \draw (2.8,-4.4) -- (4,-2.5) -- (3.8,-3.8);
  \end{scope}

\node at (2,-17) {{\large $P_8$} ($g=1$)};
\node at (12,-17) {{\large $P_{12}$} ($g=2$)};

\end{tikzpicture}
\caption{First steps to triangulating $\Sigma_{g,2}$, for $g=1$ and $2$.}
\label{closed triangulations}
\end{figure}

We may triangulate $P_{4n}$ using arcs joining $v_0$ to $v_j$ for $2n+1\leq j < 4n-1$, $v_{2n}$ to $v_i$ for $1 \leq i < 2n-1$, and $v_1$ to $v_{2n+1}$.  The cases $g=1$ and $g=2$ (so $P_8$ and $P_{12}$, respectively) of this construction are pictured on the top line of Figure \ref{closed triangulations}.  The resulting triangulations of $\Sigma_{g,2}$ are $\rho$-invariant, since for example $\rho$ takes $v_1$ to $v_{2n+1}$, so the two vertices of $\Sigma_{g,2}$ have the same valence.  A triangulation of $P_{4n}$ has $4n-2$ triangles, so this is the number $f$ of faces of the triangulation of $\Sigma_{g,2}$.  There are two vertices, and the number $e$ of edges satisfies $2e-2 = 3f$ (note that the edges $e_0$ and $e_{2n}$ belong to only one triangle each), so $e = \frac{3}{2}f+1$. Therefore $\Sigma_{g,2}$ has Euler characteristic $2-2n$.  Since it has two boundary components its genus is $g$ as asserted.

We may re-triangulate $\Sigma_{g,2}$ with an additional one or two vertices per boundary component.  We first describe how to add one vertex, yielding a total of two vertices per boundary component.  We begin by adding vertices $w_0$ and $w_{2n}$ to $P_{4n}$ in the edges $e_0$ and $e_{2n}$, respectively.  Then triangulate the resulting $(4n+2)$-gon by joining $w_0$ to $w_{2n}$ and each $v_j$ for $2n < j \leq 4n-1$, and joining $w_{2n}$ to each $v_i$ for $1 \leq i < 2n$.  This is illustrated on the middle line of Figure \ref{closed triangulations}.  The resulting triangulation of $\Sigma_{g,2}$ is $\rho$-invariant, so the two quotient vertices of the $v_i$ have identical valence, as do the projections of $w_0$ and $w_{2n}$.

However it is plain to see that $w_0$ has triangle valence $2n+1$: it is one vertex of each of the $2n$ triangles in the upper half of $P_{4n}$, and one of a unique triangle in the lower half.  But since there are $4n$ triangles there are $12n$ triangle endpoints, so the two quotient vertices of the $v_i$ must each have triangle valence $4n-1$.  As in Example \ref{one-hole block}, we even out the valence by flipping some of the $e_i$.  For each $i$ between $1$ and $2n-1$, one of the triangles in $\Sigma_{g,2}$ containing $e_i$ has $w_0$ as its opposite vertex, and the other has $w_{2n}$ in this role.  If we flip $e_i$ we thus increases the triangle valence of each of $w_0$ and $w_{2n}$ by one, and since $e_i$ has one endpoint at each quotient vertex of the $v_i$, it decreases each of their triangle valences by one.  Thus after flipping $e_1$ through $e_{n-1}$, all vertices of the new triangulation of $\Sigma_{g,2}$  have triangle valence $3n$.

To re-triangulate $\Sigma_{g,2}$ with three vertices per boundary component, we begin by placing vertices $u_0$ and $w_0$ in the interior of $e_0$, in that order, and $u_{2n}$ and $w_{2n}$ in $e_{2n}$ so that the $180$-degree rotation of $P_{4n}$ exchanges $u_0$ with $u_{2n}$ and $w_0$ with $w_{2n}$.  Using line segments join $w_0$ to each of $v_2,\hdots,v_n$; join $u_{2n}$ to each of $v_{n+1},\hdots,v_{2n-1}$; join $w_{2n}$ to each of $v_{2n+2},\hdots,v_{3n}$; and join $u_0$ to each of $v_{3n+1},\hdots,v_{4n-1}$.  Note that the collection of such line segments is rotation-invariant.  It divides $P_{4n}$ into triangles and a single region with vertices $u_0$, $w_0$, $v_n$, $v_{n+1}$, $u_{2n}$, $w_{2n}$, $u_{3n}$ and $u_{3n+1}$.  Triangulate this region by joining $u_0$ to $w_{2n}$ and $v_{3n}$, $w_0$ to $w_{2n}$ and $u_{2n}$, and $v_n$ to $u_{2n}$.

The resulting triangulation of $P_{4n}$ is still rotation-invariant, and moreover each of $u_0$ and $w_0$ has triangle valence $n+2$.  There are a total of $4n+2$ triangles, with a total of $12n+6$ vertices.  So flipping $e_i$ for all $i$ between $0$ and $2n$ except $n$ yields a triangulation of $\Sigma_{g,2}$ in which each vertex has triangle valence $2n+1 = 2g+3$.
\end{example}

\begin{example}\label{interpolators}  In the special case $g=1$, the construction of Example \ref{one-hole block} yields a one-holed torus $\Sigma_{1,1}$ triangulated with one, two, or three vertices per boundary component such that each vertex has triangle valence $9$, $6$ or $5$, respectively.  We may construct a $b$-holed torus $\Sigma_{1,b}$ as a $b$-fold cover of $\Sigma_{1,1}$, where each boundary component projects homeomorphically to that of $\Sigma_{1,1}$.  One easily constructs such a cover by, say, joining a disk to $\Sigma_{1,1}$ along its boundary, taking a $b$-fold cover of the resulting torus, removing the preimage of the disk's interior and lifting a triangulation of $\Sigma_{1,1}$.  The triangle valence of vertices remains the same.\end{example}

We now have enough building blocks to handle the orientable closed case of Proposition \ref{all triangulations}.

\begin{lemma}\label{orientable closed}  For any $g\geq 2$ and any $k\in\mathbb{N}$ that divides $12(g-1)$, the closed, orientable surface of genus $g$ has a triangulation with $k$ vertices, all of equal valence.\end{lemma}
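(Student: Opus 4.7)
The necessity of $k\mid 12(g-1)$ and the forced common valence $v_0 := 6 + 12(g-1)/k$ come from the standard Euler characteristic count: in a closed triangulation with $k$ vertices all of valence $v_0$, we have $kv_0 = 2e = 3f$ and $k-e+f = 2-2g$, which together yield the formula.

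For sufficiency, the plan is to assemble the required triangulation by identifying copies of the building blocks $\Sigma_{g',b'}$ of Examples \ref{holed spheres}--\ref{interpolators} along their boundary components. The critical observation is that when two such blocks are identified along boundary circles carrying the same number of vertices by a vertex-preserving homeomorphism, each glued vertex's triangle valence in the result is the sum of its valences from the two sides. Hence, one must exhibit a collection of blocks whose Euler characteristics sum to $2-2g$ and whose boundary valences sum to $v_0$ at every glued vertex.

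I would proceed by case analysis on $k$. For $k\in\{1,2,3\}$ the construction is a direct gluing of two $\Sigma_{g_i,1}$ from Example \ref{one-hole block} with $g_1+g_2=g$, and the valence sums $12g-6$, $6g$, $4g+2$ match $v_0$ exactly (with degenerate low-genus cases resolved by replacing a $g_i=0$ summand with an annulus $\Sigma_{0,2}$ from Example \ref{holed spheres}, which contributes valence $3$). For other $k$, cycle together two-holed pieces $\Sigma_{g_i,2}$ from Example \ref{two-hole block} along boundary circles carrying $k/B$ vertices each (where $B$ is the number of circles in the cycle), and insert annuli, three-, four-, or six-holed spheres from Example \ref{holed spheres} and genus-one multi-holed pieces $\Sigma_{1,b}$ from Example \ref{interpolators}, which contribute small valences $3$, $4$, $5$, $6$, or $9$, to make the arithmetic work out. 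For example, when $g=2$ and $k=12$ (so $v_0=7$), two pairs of pants $\Sigma_{0,3}$ with $2$ vertices per boundary, connected by three annuli $\Sigma_{0,2}$ with $2$ vertices per boundary, realize the desired triangulation, since $\chi=-2$, the edge-count-via-degrees constraint $2\cdot 3 = 3\cdot 2 = 6$ is satisfied, and $4+3=7$ at every vertex.

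The main obstacle is the combinatorial bookkeeping: for each divisor $k$ of $12(g-1)$, one must exhibit an explicit combination of blocks whose valences and Euler characteristics simultaneously achieve the required totals. The constraints are tightest when $v_0$ is near its minimum value of $7$, since then several small-valence contributions must be orchestrated across many gluings at once; arranging this is what forces the broad palette of auxiliary pieces recorded in Table \ref{orientable props}.
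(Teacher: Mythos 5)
Your strategy is the paper's strategy: assemble the closed surface from the triangulated building blocks $\Sigma_{g',b'}$, exploiting that triangle valence is additive under boundary identification. Your Euler characteristic count deriving $v_0 = 6 + 12(g-1)/k$ is also what the paper does. And your explicit constructions for $k \in \{1,2,3\}$ (glue $\Sigma_{g_1,1}$ to $\Sigma_{g_2,1}$ with $g_1 + g_2 = g$, using the one-, two-, or three-vertex boundary triangulations from Example \ref{one-hole block}) are correct and arithmetically verified --- in fact this is a cleaner route for those $k$ than the paper's uniform recipe, and since $g \geq 2$ lets you pick $g_1, g_2 \geq 1$, the parenthetical about degenerate $g_i = 0$ never actually needs to fire.

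But the core of the lemma is precisely the ``combinatorial bookkeeping'' you flag as the main obstacle and then leave unresolved, and that gap is real. The paper's proof is almost entirely a six-way case analysis on the divisibility of $k$ by $2$, $3$, $4$, $6$, $12$: in each case it specifies a choice of genus $g_0$ for the $\Sigma_{g_0,1}$ or $\Sigma_{g_0,2}$ blocks, a number of vertices per boundary component, which connector pieces (annuli, $\Sigma_{0,4}$, or $\Sigma_{1,b}$) to use, and a ring or hub-and-spoke gluing pattern; and it checks that the valence at every vertex lands on $12(g-1)/k + 6$. This is needed because the building blocks only realize a discrete and somewhat irregular set of valences (see Table \ref{orientable props}), so for a general divisor $k$ one must both choose the right block genus $g_0$ so the valence contribution hits the target and verify that $g_0$ is actually an integer given the divisibility hypothesis; the paper's Case 6 ($12 \mid k$) even needs a further parity split on $12(g-1)/k$ with a different block combination in each subcase. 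Your $g = 2$, $k = 12$ example correctly realizes one instance, but a single instance does not establish the lemma. Moreover, your suggestion to ``cycle together two-holed pieces $\Sigma_{g_i,2}$ along boundary circles carrying $k/B$ vertices each'' requires $k/B \in \{1,2,3\}$ since those are the only per-boundary vertex counts the blocks are built with, which is not automatic; working out when that is achievable, and what to do when it isn't, is exactly the missing content. To make this a proof you would need to produce, for every divisor $k$ of $12(g-1)$, an explicit block decomposition with all the arithmetic checked --- which is what the paper's six cases supply.
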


\begin{proof}  For the case $k=1$ we observe that any triangulation of the $4g$-gon $P_{4g}$ descends to a one-vertex triangulation of the genus-$g$ surface under the ``standard construction'' mentioned in Example \ref{one-hole block}.  So we assume below that $k>1$.   

By an Euler characteristic calculation, a $k$-vertex triangulation of the genus-$g$ surface  has $4(g-1) + 2k$ triangles.  If the vertices have equal valence then each has valence $\frac{12(g-1)}{k} + 6$.  Note that the number of vertices and their valence determines the genus, so below it is enough to exhibit $k$-vertex triangulations of closed, connected surfaces with vertices of the correct valence.  We break the proof into sub-cases depending on some congruence conditions satisfied by $k$.\begin{description}
  \item[Case 1: $2$ and $3$ do not divide $k$]  In this case $k$ has no common factor with $12$, so $k$ divides $g-1$.  For $g_0 = (g-1)/k$, we claim that the desired surface is obtained by joining one copy of the one-holed genus-$g_0$ surface $\Sigma_{g_0,1}$ of Example \ref{one-hole block} to each of the $k$ boundary components of $\Sigma_{1,k}$, where all building blocks are triangulated with one vertex per boundary component, so that vertices are identified.  
  
This follows from the fact that the vertex on $\Sigma_{g_0,1}$ has triangle valence $12g_0-3$ and each vertex of $\Sigma_{1,k}$ has triangle valence $9$; that triangle valence adds upon joining boundary components; and that triangle valence coincides with valence for vertices of triangulated closed surfaces.  So each vertex of the resulting closed surface has valence:
  $$ (12g_0-3)+9 = \frac{12(g-1)}{k} + 6 $$
  \item[Case 2: $2$ divides $k$, $3$ and $4$ do not]  Now $k/2$ has no common factor with $12$ and hence divides $g-1$.  So we use $k/2$ copies of $\Sigma_{g_0,2}$ from Example \ref{two-hole block}, where $g_0 = 2(g-1)/k$, each triangulated with one vertex on each boundary component.  We arrange them in a ring with a copy of the annulus $\Sigma_{0,2}$ placed between each pair of subsequent copies.  Each vertex of the resulting closed surface has valence:
  $$(6g_0+3)+3 = \frac{12(g-1)}{k} + 6$$
  \item[Case 3: $4$ divides $k$, $3$ does not]  In this case we use $k/4$ copies of $\Sigma_{g_0,2}$, where $g_0 = 4(g-1)/k$, each triangulated with two vertices on each boundary component.  As in the previous case we arrange them in a ring interspersed with copies of $\Sigma_{0,2}$, so after joining boundaries each vertex has valence $(3g_0+3) + 3 = \frac{12(g-1)}{k} + 6$.
  \item[Case 4: $3$ divides $k$, $2$ does not]  We take $g_0 = 3(g-1)/k\in\mathbb{N}$ and join $k/3$ copies of $\Sigma_{g_0,1}$, each triangulated with $3$ vertices per boundary component, to  $\Sigma_{1,k/3}$ with the corresponding triangulation to produce a closed triangulated surface.  Its vertices each have valence $(4g_0+1)+5 = \frac{12(g-1)}{k} + 6$.
  \item[Case 5: $6$ divides $k$, $4$ does not]  We take $g_0 = 6(g-1)/k\in\mathbb{N}$ and arrange $k/6$ copies of $\Sigma_{g_0,2}$, each triangulated with three vertices per boundary component, in a ring interspersed with copies of $\Sigma_{0,2}$.  The resulting closed, triangulated surface has vertices of valence $(2g_0+3)+3 = \frac{12(g-1)}{k} + 6$.
  \item[Case 6: $12$ divides $k$]  If $12(g-1)/k$ is even then we let $g_0 = 12(g-1)/(2k)$, so that
  $$ 2g_0+3 = \frac{12(g-1)}{k} + 3 $$
As in the previous case we join $k/6$ copies of $\Sigma_{g_0,2}$, triangulated with three vertices per boundary component, in a ring interspersed with copies of $\Sigma_{0,2}$.  If $12(g-1)/k$ is odd then we let $g_0 = (12(g-1)/k-1)/2$, so that
  $$ 2g_0+3 = \frac{12(g-1)}{k} + 2 $$
We build the surface in this case from $k/6$ copies of $\Sigma_{g_0,2}$, triangulated with three vertices per boundary component, and $k/12$ copies of $\Sigma_{0,4}$, each triangulated as in Figure \ref{holed sphere}. (If $g_0 = 0$, as it may be, then $\Sigma_{g_0,2}$ is the annulus of Example \ref{holed spheres}.) Given any bijection from the set of boundary components of of the $\Sigma_{g_0,2}$ to those of the $\Sigma_{0,4}$, homeomorphically identifying each boundary component of a $\Sigma_{g_0,2}$ with its image taking vertices to vertices produces a closed, triangulated surface. We must choose the bijection to make the resulting surface connected, an easy exercise equivalent to constructing a connected four-valent graph with $k/12$ vertices (treating the $\Sigma_{0,4}$ as vertices and the $\Sigma_{g_0,2}$ as edges).
\end{description}
Each case above constructs a closed, connected surface triangulated with $k$ vertices, each of valence $\frac{12(g-1)}{k} + 6$.  These quantities determine the number of edges and faces of the triangulation and show that the surface constructed has Euler characteristic $2-2g$, hence genus $g$.
\end{proof}

We now prove the same result in the non-orientable case.  Recall below that the \textit{genus} of a non-orientable surface is the maximal number of $\mathbb{R}P^2$-summands in a connected sum decomposition.  We begin by adding some non-orientable building blocks to the mix.

\begin{example}\label{holed RP2}  In Example \ref{holed spheres} the edge identifications between the two copies of the dodecagon comprising $\Sigma_{0,6}$ preserve orientation, so these copies inherit opposite orientations from any orientation on $\Sigma_{0,6}$.  Therefore the involution that exchanges the two dodecagons while rotating $\Sigma_{0,6}$ by $180$-degrees reverses orientation.  Since it is also triangulation-preserving and fixed point-free, its quotient is a three-holed $\mathbb{R}P^2$, triangulated with two vertices per boundary component where again each vertex has triangle valence $5$.
\end{example}

\begin{example}\label{non-orientable block}  For any $g\geq 2$ and a $2g$-gon with edges oriented counterclockwise and subsequently labeled $e_0,\hdots,e_{2g-1}$, identifying $e_i$ with $e_{i+1}$ by an orientation-preserving homeomorphism for each even $i<2n$ yields a non-orientable surface of genus $g$.  Any triangulation of the $2g$-gon projects to a one-vertex triangulation of this surface with $2g-2$ triangles.  For $g\geq 1$ a one-vertex triangulation of a one-holed genus-$g$ nonorientable surface $\Upsilon_{g,1}$ is produced by analogously by identifying all edges but one of a $2g+1$-gon.  This triangulation thus has $2g-1$ triangles, so the vertex has triangle valence $6g-3$.

We produce two- or three-vertex triangulations with vertices of constant valence analogously to the orientable case of Example \ref{one-hole block}.  These triangulations have $2g$ and $2g+1$ triangles, respectively, so each vertex has triangle valence $3g$ or $2g+1$.\end{example}


\begin{lemma}\label{nonorientable closed}  For any $g\geq 3$ and any $k\in\mathbb{N}$ that divides $6(g-2)$, the closed, nonorientable genus-$g$ surface has a triangulation with $k$ vertices, all of equal valence.\end{lemma}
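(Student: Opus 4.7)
The plan is to mirror the case-by-case construction of Lemma \ref{orientable closed}, replacing or supplementing its orientable building blocks with the non-orientable ones from Examples \ref{holed RP2} and \ref{non-orientable block}. A preliminary Euler characteristic computation (analogous to that in the opening of the proof of Lemma \ref{orientable closed}) shows that a $k$-vertex equal-valence triangulation of the closed non-orientable genus-$g$ surface must have each vertex of valence $v := 6 + 6(g-2)/k$. The case $k=1$ is immediate: any triangulation of a $2g$-gon descends through the identifications of Example \ref{non-orientable block} to a one-vertex triangulation of the surface.

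For $k \geq 2$, I would split into cases by $d := \gcd(k, 6)$. For each $d \in \{1, 2, 3\}$, the hypothesis $k \mid 6(g-2)$ forces $g_0 := d(g-2)/k$ to be a positive integer (using $g \geq 3$). The construction is then uniform: attach $k/d$ copies of $\Upsilon_{g_0, 1}$, triangulated with $d$ vertices per boundary, to the $k/d$ boundary components of the orientable $(k/d)$-holed torus $\Sigma_{1, k/d}$, also triangulated with $d$ vertices per boundary. The triangle valences from Examples \ref{interpolators} and \ref{non-orientable block}---namely $9$ and $6g_0-3$ for $d=1$, $6$ and $3g_0$ for $d=2$, and $5$ and $2g_0+1$ for $d=3$---sum to $v$ at every vertex of the resulting closed surface. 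Additivity of Euler characteristic gives $\chi = -k/d + (k/d)(1-g_0) = -kg_0/d = 2-g$, and non-orientability follows because each $\Upsilon_{g_0,1}$ (with $g_0 \geq 1$) contains a M\"obius band.

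The case $d = 6$ splits into sub-cases by the residue of $g_0 := 6(g-2)/k$ modulo $6$: if $3 \mid g_0$ (respectively $2 \mid g_0$), the construction of the $d=2$ (respectively $d=3$) case applies with $g_0/3$ (respectively $g_0/2$) in place of $g_0$. The genuinely new sub-case is $\gcd(g_0, 6) = 1$, and this is the main obstacle. Here the plan is to follow Case 6 of Lemma \ref{orientable closed}: first construct a non-orientable two-holed block $\Upsilon_{g_0', 2}$ by identifying edges of a $(2g_0'+2)$-gon in the orientation-preserving pattern of Example \ref{non-orientable block}, triangulating with $1$, $2$, or $3$ vertices per boundary, and flipping edges to equalize boundary-vertex triangle valences as in Examples \ref{one-hole block} and \ref{two-hole block}; then arrange these in chains interspersed with $\Sigma_{0,2}$, $\Sigma_{0,4}$, or the three-holed $\mathbb{R}P^2$ of Example \ref{holed RP2} according to the orientable Case 6 structure. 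The hardest step is verifying that $\Upsilon_{g_0', 2}$ admits a boundary triangulation of uniform valence matching the target $v$; the rest is combinatorial bookkeeping, with the Euler characteristic sum and the presence of a non-orientable block automatically giving the correct non-orientable genus.
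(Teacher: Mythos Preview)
Your cases $k=1$ and $d=\gcd(k,6)\in\{1,2,3\}$ match the paper exactly, as do the $d=6$ sub-cases where $2$ or $3$ divides $g_0=6(g-2)/k$.

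The gap is in the remaining sub-case $d=6$, $\gcd(g_0,6)=1$. You propose to build a non-orientable two-holed block $\Upsilon_{g_0',2}$ with uniform boundary-vertex triangle valence, but you do not construct it---you flag precisely this as ``the hardest step'' and leave it. This is not just bookkeeping. The orientable analog in Example~\ref{two-hole block} works because the $4n$-gon has a $180$-degree rotational symmetry exchanging the two boundary components, which forces their vertex valences to agree before any flipping; your sketch, applying the $e_i\sim e_{i+1}$ pattern of Example~\ref{non-orientable block} to a $(2g_0'+2)$-gon, identifies all polygon vertices to one and leaves two \emph{adjacent} unpaired edges, producing a single boundary component rather than two. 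So the block you need is not yet in hand. Your appeal to ``Case~6 of Lemma~\ref{orientable closed}'' is also imprecise: that case assumes $12\mid k$ and in its odd sub-case uses $k/12$ copies of $\Sigma_{0,4}$, which is unavailable here where only $6\mid k$ is given.

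The paper sidesteps the new block entirely. For $g_0$ odd with $g_0\equiv 1\pmod 3$ it sets $g_0'=(g_0+2)/3$ and $g_1=g_0'-1$, then assembles $k/6$ copies each of $\Sigma_{0,3}$ (two vertices per boundary, triangle valence~$4$), $\Upsilon_{g_0',1}$ (valence~$3g_0'$), and the \emph{orientable} block $\Sigma_{g_1,2}$ (valence~$3g_1+3$): the $\Sigma_{g_1,2}$'s are arranged in a ring through two boundary components of each $\Sigma_{0,3}$, and each remaining $\Sigma_{0,3}$-boundary is capped by a $\Upsilon_{g_0',1}$. Every vertex then has valence $3g_0'+4=g_0+6$. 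For $g_0\equiv 2\pmod 3$ the same scheme works with $g_0'=(g_0+1)/3$ and the three-holed $\mathbb{R}P^2$ of Example~\ref{holed RP2} (valence~$5$) in place of $\Sigma_{0,3}$. All pieces are already on the shelf.
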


\begin{proof}  The $k=1$ case is given at the beginning of Example \ref{non-orientable block} above.  For the case $k>1$, as in the proof of Lemma \ref{orientable closed} we consider several sub-cases.
\begin{description}  \item[Case 1: $2$ and $3$ do not divide $k$]  For $g_0 = (g-2)/k$ we glue one copy of $\Upsilon_{g_0,1}$, triangulated with one vertex per boundary component, to each boundary component of $\Sigma_{1,k}$, triangulated to match.  The resulting closed surface has each vertex of valence $(6g_0 -3)+9 = 6(g-2)/k + 6$.
\item[Case 2: $2$ divides $k$, $3$ does not]  For $g_0 = 2(g-2)/k$, join $k/2$ copies of $\Upsilon_{g_0,1}$ to boundary components of $\Sigma_{1,k/2}$, all triangulated with two vertices per boundary component.  In the resulting surface each vertex has valence $3g_0+6=6(g-2)/k + 6$.
\item[Case 3: $3$ divides $k$, $2$ does not]  For $g_0 = 3(g-2)/k$, join $k/3$ copies of $\Upsilon_{g_0,1}$ to boundary components of $\Sigma_{1,k/3}$, all triangulated with three vertices per boundary component.  In the resulting surface each vertex has valence $(2g_0+1) + 5=6(g-2)/k + 6$.
\item[Case 4: $6$ divides $k$] If $6(g-2)/k$ is even then take $g_0 = 6(g-2)/(2k)$ and join $k/3$ copies of $\Upsilon_{g_0,1}$ to boundary components of $\Sigma_{1,k/3}$, all triangulated with three vertices per boundary component.  In the resulting surface each vertex has valence $(2g_0+1)+5 = 6(g-2)/k+6$.  If $6(g-2)/k$ is odd and congruent to $0$ modulo three, take $g_0=6(g-2)/(3k)$ and join $k/2$ copies of $\Upsilon_{g_0,1}$ to boundary components of $\Sigma_{1,k/2}$, all triangulated with two vertices per boundary component.  In the resulting surface each vertex has valence $3g_0 + 6 = 6(g-2)/k+6$.

If $6(g-2)/k$ is odd and congruent to $1$ modulo three, take $g_0 = (6(g-2)/k+2)/3$ and take $g_1 = g_0-1$.  We join $k/6$ copies of $\Sigma_{0,3}$, triangulated with two vertices per boundary component as in Figure \ref{holed sphere}, to $k/6$ copies of each of $\Upsilon_{g_0,1}$ and the two-holed orientable building block $\Sigma_{g_1,2}$, triangulated to match, as follows: arrange the copies of $\Sigma_{g_1,2}$ in a ring and join boundary components of each pair of subsequent copies to two boundary components of a fixed copy of $\Sigma_{0,3}$.  This leaves one free boundary component on each copy of $\Sigma_{0,3}$, which we cap off with a copy of $\Upsilon_{g_0,1}$.  The result is a connected, closed triangulated surface with $k$ vertices, each of valence $3g_0+4 = 3(g_1+1)+4 = 6(g-2)/k+6$.

If $6(g-2)/k$ is odd and congruent to $2$ modulo three then we perform the same construction as in the previous case, except that we take $g_0 = [6(g-2)/k+1]/3$ and replace each copy of $\Sigma_{0,3}$ with a copy of the three-holed $\mathbb{R}P^2$ from Example \ref{holed RP2}.  The resulting closed, triangulated surface now has $k$ vertices that each have valence $3g_0+5 = 3(g_1+1) + 5 = 6(g-2)/k+6$.\end{description}
As in the proof of Lemma \ref{orientable closed}, the fact that each non-orientable surface constructed above has $k$ vertices, each of valence $6(g-2)/k+6$, implies that it has genus $g$.\end{proof}

\begin{example}[Triangulated complexes with ideal vertices]\label{marked block} Here we will produce a triangulated complex $X_l$ homeomorphic to a disk for each $l\in\mathbb{N}$, with $l+1$ vertices of which $l$ lie in the interior and each have triangle valence one.  We call these vertices ``marked''.   The remaining vertex lies on the boundary of $X_l$.  The triangulation of $X_l$ is comprised of $2l-1$ triangles in two classes: ``marked'' triangles $H_1,\hdots,H_l$, which each have one marked vertex, and ``non-marked'' triangles $E_1,\hdots,E_{l-1}$.

\newcommand\triangleone{
  \draw (0,0) -- (2,0) -- (1,-1.73) -- cycle;
  \draw [fill] (0,0) circle [radius=0.1];
  \draw [fill] (2,0) circle [radius=0.1];
  \draw [fill] (1,-1.73) circle [radius=0.1]; }

\newcommand\triangletwo{
  \draw (0,0) -- (1,1.73) -- (2,0) -- cycle;
  \draw [fill] (0,0) circle [radius=0.1];
  \draw [fill] (2,0) circle [radius=0.1];
  \draw [fill] (1,1.73) circle [radius=0.1]; }

\newcommand\ideal{
  \draw [directed] (0,0) -- (2,0);
  \draw [directed] (0,0) .. controls (0.8,0.8) .. (1,2);
  \draw [directed] (2,0) .. controls (1.2,0.8) .. (1,2);
  \draw (1,2) circle [radius=0.1]; }

\begin{figure}
\begin{tikzpicture}

\begin{scope}[scale=0.5,xshift=-4cm]
	\ideal
	\draw [fill] (0,0) circle [radius=0.1];
	\draw [fill] (2,0) circle [radius=0.1];
	\node at (1,-0.7) {\large $l=1$};
\end{scope}

\begin{scope}[scale=0.5]
	\triangletwo
	\draw [directed] (0,0) .. controls (-0.29,1.09) .. (-1.23,1.87);
	\draw [directed] (1,1.73) .. controls (-.09,1.44) .. (-1.23,1.87);
	\draw (-1.23,1.87) circle [radius=0.1];

	\draw [directed] (2,0) .. controls (2.29,1.09) .. (3.23,1.87);
	\draw [directed] (1,1.73) .. controls (2.09,1.44) .. (3.23,1.87);
	\draw (3.23,1.87) circle [radius=0.1];

	\node at (1,-0.7) {\large $l=2$};
\end{scope}

\begin{scope}[scale=0.5,xshift=6cm]
	\triangletwo
	\begin{scope}[xshift=-1cm,yshift=1.73cm]\triangleone\end{scope}

	\begin{scope}[xshift=-1cm,yshift=1.73cm]\ideal\end{scope}
	\begin{scope}[rotate=120]\ideal\end{scope}
	\begin{scope}[xshift=1cm,yshift=1.73cm,rotate=-60]\ideal\end{scope}

	\node at (0.1,-0.7) {\large ${l=3}$};
\end{scope}

\begin{scope}[scale=0.5,xshift=13.5cm]
	\triangletwo
	\begin{scope}[xshift=-1cm,yshift=1.73cm]\triangleone\end{scope}
	\begin{scope}[xshift=-2cm]\triangletwo\end{scope}

	\begin{scope}[xshift=-1cm,yshift=1.73cm]\ideal\end{scope}
	\begin{scope}[xshift=1cm,yshift=1.73cm,rotate=-60]\ideal\end{scope}
	\begin{scope}[rotate=180]\ideal\end{scope}
	\begin{scope}[xshift=-2cm,rotate=60]\ideal\end{scope}

	\node at (1,0.5) {$E_1$};
	\node at (0,1.2) {$E_2$};
	\node at (-1,0.5) {$E_3$};

	\node at (0.8,-0.8) {\large $l=4$};
\end{scope}

\end{tikzpicture}
\caption{Constructions of $X_l$ for small $l$.  Ideal vertices come from open circles.}
\label{punctured fig}
\end{figure}
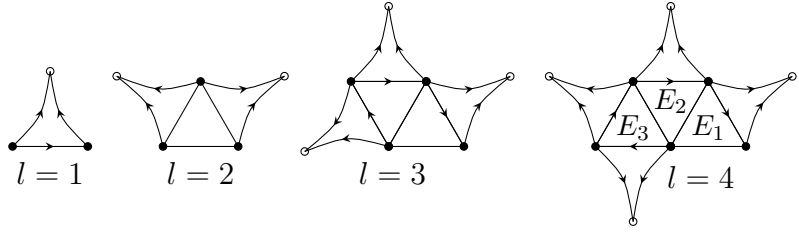

We begin by identifying a subset of the edges of the $E_i$ in pairs so that their union is homeomorphic to a disk, for instance according to the scheme indicated in Figure \ref{punctured fig}, so that (for $l>2$) $E_1$ has exactly two free edges and each $E_i$ has at least one.  An Euler characteristic calculation shows that a total of $2l-4$ edges of the $E_i$ are identified, so the boundary of $\bigcup E_i$ is a union of $l+1$ free edges.  For each free edge $e$ of $\bigcup E_i$ except one belonging to $E_1$, we join one of the $H_i$ to $\bigcup E_i$ by identifying the edge opposite its marked vertex to $e$ via a homeomorphism.  We then finish by identifying the two edges of each $H_i$ that contain its marked vertex to each other by a boundary orientation-reversing homeomorphism.

In the resulting quotient $X_l$, the interior of each $H_i$ forms an open neighborhood of its marked vertex, and its edge opposite the marked vertex descends to a loop based at the non-marked vertex quotient.  There is therefore a unique non-marked vertex quotient, with triangle valence $5l-3$, where $3l-3$ vertices of non-marked triangles are identified together with $2l$ non-marked vertices of marked triangles.

Let $Y_l$ be the complex obtained by joining a non-marked triangle $E_0$ to $X_l$ along its sole free edge (of $E_1$).  Then $Y_l$ is still homeomorphic to a disk with $l$ non-marked vertices in its interior, but its boundary is a union of two edges: the free edges of $E_0$.  The non-marked vertex quotient of $X_l$ has triangle valence $5l-1$ in $Y_l$, with $3l-1$ non-marked triangle vertices identified there, since it picks up an extra two from $E_0$.  The other non-marked vertex quotient in $Y_l$ is the single vertex shared by the two free edges of $E_0$, which therefore has triangle valence one in $Y_l$.\end{example}

\begin{proof}[Proof of Proposition \ref{all triangulations}]  We will begin by re-stating the Proposition separately in the orientable and non-orientable cases.  In the non-orientable case it asserts that for $g\geq 1$ and $n\geq 0$ such that $2-g-n<0$, and any $k\in\mathbb{N}$ that divides both $6(2-g)$ and $n$, there is a closed non-orientable surface of genus $g$ with $n$ marked points which is triangulated with $k+n$ vertices with the following two properties: each marked point is a valence-one vertex; and calling the triangle containing it \textit{marked}, each remaining vertex is a quotient of $2n/k$ marked and
\[ 6 + \frac{6(g-2) + 3n}{k} \] 
non-marked triangle vertices.  Here we recall from the Proposition that the surface is required to have Euler characteristic $\chi+n$, so from the non-orientable case of (\ref{g vs x vs b}) with $b=0$ we have $\chi+n = 2-g$.  Thus $\chi = 2 - g -n < 0$, and $k$ divides both $6(2-g)$ and $n$ if and only if it divides both $6\chi$ and $n$.

The Proposition's orientable case asserts that for $g, n\geq 0$ such that $2-2g-n<0$, and any $k$ dividing both $12(g-1)$ and $n$, that there is a closed, orientable surface of genus $g$ with $n$ marked points which is triangulated in analogous fashion to the non-orientable case except that each non-marked vertex is contained in
\[ 6 + \frac{12(g-1)+3n}{k} \]
non-marked triangle vertices.  The valence computations here and in the non-orientable case are obtained by substituting $2-g-n$ and $2-2g-n$, respectively, for $\chi$ in the formula $6-(6\chi+3n)/k$ in the Proposition's original statement.

From these restatements it is clear that Lemmas \ref{orientable closed} and \ref{nonorientable closed} address the $n=0$ cases, so we assume below that $n>0$.  We first consider the orientable case, beginning with the subcase $k=1$.  For $g\geq 1$ join a copy of the one-holed building block $\Sigma_{g,1}$ of Example \ref{one-hole block}, triangulated with one vertex, to a copy of $X_n$ along their boundaries.  The non-marked vertex is then a quotient of 
  $$(12g-3) + (3n-3) = 12(g-1) + 3n  + 6$$ 
non-marked triangle vertices, and $2n$ marked triangle vertices.  In the case $g=0$ (so with $n\geq 3$), for natural numbers $i$ and $j$ such that $i+j=n$ we join a copy of $X_i$ to a copy of $X_j$ along their boundaries.  The result is homeomorphic to a triangulated sphere with $n$ marked vertices, where the non-marked vertex is a quotient of $(3i-3) + (3j-3) = 3n -6$ non-marked triangle vertices.

Now take $k\geq2$, and suppose $g\geq 2$.  Lemma \ref{orientable closed} supplies a closed, oriented, surface $S_0$ of genus $g$ with no marked points and a $k$-vertex triangulation, where each vertex has valence $12(g-1)/k+6$.  By its construction, $S_0$ is endowed with a collection of disjoint simple closed curves, each a union of one, two, or three edges, that separate it into a union of building blocks.  Orient each such curve on $S_0$, and for an edge $e$ contained in such a curve let $e$ inherit its orientation.  Then cut out the interior of $e$; that is, take the path-completion of $S_0-e$.  The resulting space has a single boundary component which is a union of two edges, and $S_0$ is recovered by identifying them.

Construct $S$ by removing the interior of each such edge $e$, orienting the two new edges to match that of $e$, and joining each new edge to one of a copy of $Y_l$ in orientation-preserving fashion, where $l = n/k$ and the edges of $Y_l$ are oriented pointing away from the free vertex of $E_0$.  Each vertex of $S_0$ is the initial vertex of one oriented edge, and the terminal vertex of one oriented edge.  It therefore picks up one additional non-marked triangle vertex from one copy of $Y_l$ and $3l-1$ from another, as well as $2l$ marked triangle vertices.  This vertex thus has the required valence in $S$.

For the orientable case $g = 1$ and $k \geq 2$ we join one copy of $X_l$ to each boundary component of $\Sigma_{1,k}$, triangulated as in Example \ref{interpolators} with one vertex per boundary component, where $l=n/k$.  Then each non-marked vertex is contained in $9+3l-3 = 3n/k + 6$ non-marked triangle vertices and $2n/k$ vertices from marked triangles.

We finally come to the orientable case $g=0$ and $k\geq 2$.  Since $k$ must divide $12(g-1) = -12$ it can be only $2$, $3$, $4$, $6$ or $12$.  For the case $k=2$, noting that $n$ is thus even, we join one copy of $X_{n/2}$ to each boundary component of the annulus $\Sigma_{0,2}$, triangulated with one vertex per boundary component.  For $k=3$ we produce a sphere $S_0$ by doubling a triangle across its boundary, orient the cycle of triangle edges, and perform the construction described above to yield $S$, using three copies of $Y_{n/3}$.  For $k=6$ and $k=12$ we begin with $\Sigma_{0,2}$ or $\Sigma_{0,4}$, respectively, each triangulated with three vertices per boundary component as in Example \ref{holed spheres}; construct $S_0$ by capping off each boundary component with a single triangle; and proceed similarly.

For the case $k=4$ we first construct a space $Z_l$, $l\in\mathbb{N}$, as follows: join two copies of $Y_l$ along a single edge of $E_0$ in each so that the free vertex of $E_0$ in one is identified to the non-trivial, non-marked vertex quotient in the other.  Then $Z_l$ is homeomorphic to a disk with two vertices on its boundary, each a quotient of $3l$ non-marked triangle vertices and $2l$ non-marked vertices of marked triangles, and $2l$ marked vertices in its interior.  Returning to the $k=4$ subcase, we attach one copy of $Z_{n/4}$ to each boundary component of the annulus $\Sigma_{0,2}$, triangulated with two vertices per boundary component.  This yields a sphere with four non-marked vertices, each a quotient of $3n/4+3$ non-marked triangle vertices as required.  The valence requirements are also easily checked in the other orientable subcases with $g=0$ and $k\geq 2$.

The non-orientable subcase $k=1$ is analogous to the corresponding orientable subcase, with the orientable building block $\Sigma_{g,1}$ replaced by $\Upsilon_{g,1}$ from Example \ref{non-orientable block}.  (There is no analog of the $g=0$ sub-subcase here.)  And the non-orientable subcase $k\geq 2$, $g\geq 3$ is analogous to the orientable subcase $k\geq 2$, $g\geq 2$, with the surface $S_0$ provided here by Lemma \ref{nonorientable closed} instead of \ref{orientable closed}.

The non-orientable subcase $k\geq 2$, $g=2$ is analogous to the orientable subcase $k\geq 2$, $g=1$, but with the $k$-holed torus $\Sigma_{1,k}$ replaced by a $k$-holed Klein bottle (the non-orientable genus-two surface) triangulated with one vertex per boundary component.  Its construction is analogous to that of $\Sigma_{1,k}$ in Example \ref{interpolators}: fill the hole of the triangulated one-holed Klein bottle $\Upsilon_{2,1}$, take a $k$-fold cyclic cover, and remove the preimage of the interior of the added disk.  In all cases so far the valence criteria are straightforward to check.

We finally come to the subcase $k\geq 2$ and $g=1$.  As in the corresponding orientable subcase ($g=0$), we note that possible values of $k$ are tightly restricted by the requirement that $k$ divides $6(g-2) = -6$: it must be either $2$, $3$, or $6$.  For $k=2$ or $3$ we note that the construction of Example \ref{non-orientable block} supplies $\Upsilon_{1,1}$, a one-holed $\mathbb{R}P^2$ triangulated with one, two or three vertices on its boundary.  In each case each vertex is contained in three non-marked triangle vertices.  For $k=2$ we attach a copy of $Z_{n/2}$ to $\Upsilon_{1,1}$, triangulated with two vertices, along their boundaries.  For $k=3$ we triangulate $\Upsilon_{1,1}$ with three vertices, cap off its boundary with a triangle to produce a surface $S_0$, and produce $S$ by suturing in three copies of $Y_{n/3}$ as above.  For $k=6$ we cap off the boundary components of the three-holed $\mathbb{R}P^2$ from Example \ref{holed spheres}, triangulated with two vertices per boundary component, with three copies of $Z_{n/3}$ as defined above.
\end{proof}

\section{Generic non-sharpness}\label{dullity}

In this section we prove Theorem \ref{Vor bound not attained}, that the bound $r_{\chi,n}^k$ of Proposition \ref{Vor bound} is ``generically'' not sharp.  We begin by observing that $r_{\chi,n}^k$ is generically not attained.

\begin{lemma}\label{this ain't it}  For any $\chi<0$ and $k\in\mathbb{N}$ that does not divide $6\chi$, there is no closed hyperbolic surface with Euler characteristic $\chi$ and a packing by $k$ disks of radius $r_{\chi,0}^k$.  For any fixed $\chi< 0$ and $n>0$, there are only finitely many $k\in\mathbb{N}$ for which a complete, finite-area hyperbolic surface with Euler characteristic $\chi$ and $n$ cusps exists which admits a packing by $k$ disks of radius $r_{\chi,n}^k$.\end{lemma}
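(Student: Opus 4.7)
For the closed ($n=0$) case, suppose $F$ is a closed hyperbolic surface of Euler characteristic $\chi$ packed by $k$ disks of radius $r_{\chi,0}^k$. Proposition \ref{Vor bound} decomposes $F$ into equilateral triangles of side $2r_{\chi,0}^k$ with $k$ vertices. Since all these triangles are congruent with common vertex angle $\alpha(r_{\chi,0}^k)$, the angle-sum condition at any vertex $v$ reads $i_v\,\alpha(r_{\chi,0}^k) = 2\pi$, forcing the valence $i_v$ to be the same positive integer across all vertices. Lemma \ref{i_n_j} applied with $j = 0$ then yields $k \mid 6\chi$, contradicting the hypothesis.

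For the cusped ($n>0$) case, suppose $k$ attains the bound, and write $\alpha_k = \alpha(r_{\chi,n}^k)$ and $\beta_k = \beta(r_{\chi,n}^k)$. By Proposition \ref{Vor bound}, $F$ decomposes into equilateral and $n$ horocyclic ideal triangles with common compact side $2r_{\chi,n}^k$, and around each of the $k$ non-ideal vertices $v$ we have
\[ i_v \alpha_k + j_v \beta_k = 2\pi \]
with $i_v,j_v$ non-negative integers. Reading off the defining equation for $r_{\chi,n}^k$, as $k \to \infty$ one finds $r_{\chi,n}^k \to 0$, and correspondingly $\alpha_k \to \pi/3$ and $\beta_k \to \pi/2$, each strictly from below.

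Because $\alpha_k$ and $\beta_k$ are bounded away from zero for large $k$, the pair $(i_v,j_v)$ is confined to a fixed finite set. In the limit, the vertex equation becomes $i\pi/3 + j\pi/2 = 2\pi$, equivalently $4i + 3j = 12$, whose non-negative integer solutions are only $(3,0)$ and $(0,4)$. These in turn force $\alpha_k = 2\pi/3$ or $\beta_k = \pi/2$ exactly, each impossible since $\alpha(r) < \pi/3$ and $\beta(r) < \pi/2$ for all $r > 0$. For every other $(i,j)$ in the finite range, $i\alpha_k + j\beta_k$ stays bounded away from $2\pi$ for $k$ sufficiently large. Thus no vertex configuration exists once $k$ exceeds some threshold, and the set of attaining $k$ is finite. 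The main subtlety is the possible heterogeneity of vertex types $(i_v,j_v)$ across $F$, which would prevent a direct appeal to Lemma \ref{i_n_j}; but the limiting argument above applies to each vertex individually, bypassing any need for global uniformity.
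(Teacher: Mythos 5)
Your closed-case argument is correct, though it detours through Lemma \ref{i_n_j}, whose statement carries an orientability hypothesis (which its counting proof does not actually use). The paper argues more directly: a single vertex relation $i\,\alpha(r_{\chi,0}^k)=2\pi$ with $i\in\mathbb{N}$, combined with the defining equation $(6-6\chi/k)\alpha(r_{\chi,0}^k)=2\pi$, gives $6-6\chi/k=i\in\mathbb{Z}$, i.e.\ $k\mid 6\chi$.

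In the cusped case your strategy is sound and genuinely different from the paper's, but it contains an arithmetic slip that you should fix. From $i\pi/3 + j\pi/2 = 2\pi$ one gets $2i+3j=12$, not $4i+3j=12$, and the non-negative integer solutions are $(6,0)$, $(3,2)$, and $(0,4)$, not $(3,0)$ and $(0,4)$. Your claim that the limit solutions ``force $\alpha_k=2\pi/3$ or $\beta_k=\pi/2$'' therefore applies only to $(6,0)$ (which actually forces $\alpha_k=\pi/3$, not $2\pi/3$) and to $(0,4)$; the solution $(3,2)$ needs the separate observation that the strict inequalities $\alpha_k<\pi/3$ and $\beta_k<\pi/2$ already give $3\alpha_k+2\beta_k<2\pi$. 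Happily, all three correct limit solutions are ruled out for every $k$, not merely for large $k$, by exactly this strict-inequality reasoning, so your conclusion survives the correction. You also assert without argument that $r_{\chi,n}^k\to 0$, hence $\alpha_k\to\pi/3$ and $\beta_k\to\pi/2$; the paper supplies a short monotonicity proof ($f_{k+1}<f_k$ pointwise, so $r_{\chi,n}^k$ decreases, and a contradiction if the limit were positive) which you should include or cite. For comparison, the paper's own route avoids the case analysis on $(i,j)$ entirely: it observes that the two finite vertices of each horocyclic ideal triangle are identified in $F$ (forming a monogon around a cusp), so at most $n$ of the $k$ disk centers meet any horocyclic angle; for $k>n$ some vertex sees only equilateral angles, forcing $\alpha(r_{\chi,n}^k)=2\pi/i$, which fails once $2\pi/7<\alpha(r_{\chi,n}^k)<\pi/3$. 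Your version skips the monogon observation at the cost of enumerating all $(i,j)$ near the limit; both are valid once the arithmetic is repaired.
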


\begin{proof}  We first consider the closed ($n=0$) case.  The main observation here is that by the equation from Proposition \ref{Vor bound} defining $r_{\chi,n}^k$, $\alpha(r_{\chi,0}^k)$ is an integer submultiple of $2\pi$ if and only if $k$ divides $6\chi$.  By the Proposition, any closed surface that admits a radius-$r_{\chi,0}^k$ disk packing is triangulated by a collection of equilateral triangles that all have vertex angle $\alpha(r_{\chi,n}^k)$.  The total angle around any vertex $v$ of the triangulation is thus $i\cdot \alpha(r_{\chi,n}^k)$, where $i$ is the number of triangle vertices identified at $v$.  But since $v$ is a point of a hyperbolic surface this angle is $2\pi$, whence $k$ must divide $6\chi$.

Let us now fix $\chi<0$ and $n>0$, and recall from Proposition \ref{Vor bound} that $r_{\chi,n}^k>0$ is determined by the equation $f_k(r_{\chi,n}^k) = 2\pi$, where for $\alpha(r) = 2\sin^{-1}\left(\frac{1}{2\cosh r}\right)$ and $\beta(r) = \sin^{-1}\left(\frac{1}{\cosh r}\right)$,
$$ f_k(r) = \left(6-\frac{6\chi+3n}{k}\right)\alpha(r) + \frac{2n}{k}\beta(r) = \left(6 - \frac{6\chi+n}{k}\right)\alpha(r) + \frac{2n}{k}(\beta(r) - \alpha(r)).$$
Rewriting $f_k$ as on the right above makes it clear that for any fixed $r>0$ and $k\in\mathbb{N}$, $f_{k+1}(r) < f_k(r)$.  For since the inverse sine is concave up on $(0,1)$ we have $\beta(r) > \alpha(r)$ for any $r$, and inspecting the equations of (\ref{g vs x vs b}) shows in all cases that $-6\chi-n > 0$.  Moreover, these equations show in all cases that $-6\chi-3n\geq -3$ in all cases, so $f_k$ decreases with $r$ since both $\alpha$ and $\beta$ do.

We now claim that $\alpha(r_{\chi,n}^k)$ strictly increases to $\pi/3$ and $\beta(r_{\chi,n}^k)$ to $\pi/2$ as $k\to\infty$.  For each $k$, by the above we have $f_{k+1}(r_{\chi,n}^k) < f_k(r_{\chi,n}^k) = 2\pi$, so since each $f_k$ decreases with $r$ it follows that $r_{\chi,n}^{k+1} < r_{\chi,n}^k$.  Thus in turn $\alpha(r_{\chi,n}^{k+1})>\alpha(r_{\chi,n}^k)$ and similarly for $\beta$.  Note that for each $r$, $f_k(r) \to 6\alpha(r)$ as $k\to \infty$.  The limit is a decreasing function of $r$ that takes the value $2\pi$ at $r=0$.  Since the sequence $\left\{r_{\chi,n}^k\right\}_{k\in\mathbb{N}}$ is decreasing and bounded below by $0$, it converges to its infimum $\ell$.  If $\ell$ were greater than $0$ then for large enough $k$ we would have $f_k(r_{\chi,n}^k) < f_k(\ell) < 2\pi$, a contradiction.  Thus $r_{\chi,n}^k\to 0$ as $k\to\infty$, and the claim follows by a simple computation.

Now let us recall the geometric meaning of $\alpha$ and $\beta$: $\alpha(r)$ is the angle at any vertex of an equilateral triangle with side length $2r$, and $\beta(r)$ is the angle of a horocyclic triangle with compact side of length $2r$, at either endpoint of this side.  By Proposition \ref{Vor bound}, if there is a complete hyperbolic surface $F$ of finite area with Euler characteristic $\chi$ and $n$ cusps and a packing by $k$ disks of radius $r_{\chi,n}^k$ then $F$ decomposes into equilateral triangles and exactly $n$ horocyclic ideal triangles, all with compact sidelength $2r_{\chi,n}^k$.

For such a surface $F$, since $F$ has $n$ cusps and there are $n$ horocyclic ideal triangles, each horocyclic ideal triangle has its edges identified in $F$ to form a monogon encircling a cusp.  At most $n$ disk centers can be at the vertex of such a monogon, so for $k > n$ there is a disk center which is only at the vertex of equilateral triangles.  The angles around this vertex must sum to $2\pi$, so $\alpha(r_{\chi,n}^k)$ must equal $2\pi/i$ for some $i\in\mathbb{N}$.  But for $k$ large enough we have $\frac{2\pi}{7} < \alpha(r_{\chi,n}^k) < \frac{\pi}{3}$, and this is impossible.
\end{proof}

We will spend the rest of the section showing that among finite-area complete hyperbolic surfaces with any fixed topology, the maximal $k$-disk packing radius does attain a maximum.  To make this more precise requires some notation.  Below for a smooth surface $\Sigma$ of \textit{hyperbolic type} --- that is, which is diffeomorphic to a complete, finite-area hyperbolic surface --- let $\mathfrak{T}(\Sigma)$ be the \textit{Teichm\"uller space} of $\Sigma$.  This is the set of pairs $(F,\phi)$ up to equivalence, where $F$ is a complete, finite-area hyperbolic surface and $\phi\co\Sigma\to F$ is a diffeomorphism (called a \textit{marking}), and $(F',\phi')$ is equivalent to $(F,\phi)$ if there is an isometry $f\co F\to F'$ such that $f\circ\phi$ is homotopic to $\phi'$.  We take the usual topology on $\mathfrak{T}(\Sigma)$, which we will discuss further in Lemma \ref{continuous} below.

\begin{definition}\label{maximy max max} For a surface $\Sigma$ of hyperbolic type and $k\in\mathbb{N}$, define $\maxi_k\co\mathfrak{T}(\Sigma)\to (0,\infty)$ by taking $\maxi_k(F,\phi)$ to be the maximal radius of an equal-radius packing of $F$ by $k$ disks.  Refer by the same name to the induced function on the moduli space $\mathfrak{M}(\Sigma)$ of $\Sigma$.\end{definition}

The \textit{moduli space} $\mathfrak{M}(\Sigma)$ is the quotient of $\mathfrak{T}(\Sigma)$ by the action of the mapping class group, or, equivalently, the collection of complete, finite-area hyperbolic surfaces homeomorphic to $\Sigma$, taken up to isometry, endowed with the quotient topology from $\mathfrak{T}(\Sigma)$.  Since the value of $\maxi_k$ on $(F,\phi)$ depends only on $F$, it induces a well-defined function on $\mathfrak{M}(\Sigma)$.  We are aiming for:

\begin{proposition}\label{there's a max}  For any surface $\Sigma$ of hyperbolic type, and any $k\in\mathbb{N}$, $\maxi_k$ attains a maximum on $\mathfrak{M}(\Sigma)$.\end{proposition}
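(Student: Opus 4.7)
The plan is to combine three ingredients: continuity of $\maxi_k$ on $\mathfrak{T}(\Sigma)$ (presumably the content of the referenced Lemma \ref{continuous}), a Mumford-style compactness statement for the thick part of moduli space, and the inflation lemma \ref{the FLoBUS} which lets us ``push'' a hyperbolic structure with short geodesics to one with longer systole without decreasing the maximal $k$-disk packing radius. The argument then runs as follows. Take a maximizing sequence $\{[F_m]\}\subset\mathfrak{M}(\Sigma)$ for $\maxi_k$, i.e.\ with $\maxi_k(F_m)\to M := \sup\maxi_k$. (Since each $F_m$ admits a packing by some disk of positive radius the supremum $M$ is positive; and by Proposition \ref{Vor bound} it is finite, bounded above by $r_{\chi,n}^k$.)

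The main step is to show that this sequence can be replaced by one that lies in the $\epsilon$-thick part $\mathfrak{M}_\epsilon(\Sigma) := \{[F]:\syst(F)\geq \epsilon\}$ for some fixed $\epsilon>0$, without decreasing the values of $\maxi_k$ below a sequence converging to $M$. Fix any small $\epsilon>0$ less than the Margulis constant, and suppose some $F_m$ has $\syst(F_m) < \epsilon$. The collar lemma implies that an embedded disk of radius $r\geq \maxi_k(F_m)$ in $F_m$ cannot meet the Margulis tube of any short geodesic too deeply; in particular, each disk centre lies in the thick part of $F_m$. Apply Lemma \ref{the FLoBUS} to each short geodesic in turn, inflating the thick part to produce a new hyperbolic structure $F_m'$ on the same underlying smooth surface, in which every short geodesic is lengthened and the thick part is enlarged (or at worst preserved) isometrically on a neighborhood of the disk centres. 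Because the disks of the original packing live in a region that is embedded isometrically in $F_m'$, they survive to give an equal-radius packing of $F_m'$ of the same radius. Iterating until no systole is below $\epsilon$, we obtain $[F_m']\in\mathfrak{M}_\epsilon(\Sigma)$ with $\maxi_k(F_m')\geq\maxi_k(F_m)$.

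Now Mumford compactness (which applies equally to orientable and non-orientable moduli with cusps) guarantees $\mathfrak{M}_\epsilon(\Sigma)$ is compact, so the modified sequence $\{[F_m']\}$ has a subsequence converging to some $[F^\ast]\in\mathfrak{M}_\epsilon(\Sigma)$. By continuity of $\maxi_k$ on $\mathfrak{M}(\Sigma)$ (Lemma \ref{continuous}),
\[ \maxi_k(F^\ast) = \lim \maxi_k(F_m') \geq \lim\maxi_k(F_m) = M. \]
Since $M$ is by definition the supremum, equality holds and the maximum is attained at $[F^\ast]$.

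The delicate step is the inflation argument: one has to verify that Lemma \ref{the FLoBUS} can be iterated in the presence of several short geodesics, and more importantly that the thick region containing the $k$ disk centres is carried isometrically into the new surface so that the packing survives with the same radius. This requires checking that the inflation is supported in standard collar neighbourhoods disjoint from the disks, which is precisely what the lower bound on injectivity radius at the disk centres (coming from the embeddedness of the packing) provides. Once this is established, the remainder of the proof is essentially the standard ``compactness plus continuity'' template.
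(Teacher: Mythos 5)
Your overall architecture (continuity of $\maxi_k$ + inflate short geodesics via Lemma \ref{the FLoBUS} + Mumford compactness) is exactly the paper's, and the skeleton works. But your description of the key step — why the packing survives the deformation — is wrong, and a proof written as you describe would not hold up.

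You assert that the inflation is ``supported in standard collar neighbourhoods disjoint from the disks'' and that the disk centres lie ``in a region that is embedded isometrically in $F_m'$.'' Neither is true of the strip deformation underlying Lemma \ref{the FLoBUS}. That deformation inserts a hyperbolic strip along a complete geodesic $\hat\alpha$ that crosses the \emph{entire} surface, not just the collar, so the metric is changed globally and there is no isometrically embedded copy of a neighbourhood of the disk centres. What the Lemma actually gives is weaker, but sufficient: a path-Lipschitz map $f\co S_0-\gamma_0\to S$, i.e.\ one that does not \emph{increase} lengths. The correct mechanism is then to take the original disk centres $x_1,\hdots,x_k\in S$, pull them back to points $x_i'\in S_0$ with $f(x_i')=x_i$, and observe that every based geodesic arc at $x_i'$ (and every arc joining $x_i'$ to $x_j'$) maps under $f$ to a path in $S$ of length no greater; hence injectivity radii and pairwise distances at the $x_i'$ are at least as large as at the $x_i$, so disks of the same radius still embed disjointly in $S_0$. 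This is an inequality on distances, not an isometry, and that distinction is the content of the step you've elided. Similarly, the observation that iteration terminates is not ``the inflation avoids the other collars'' but rather that the same Lipschitz property shows the injectivity radius at points landing in the old thick part does not decrease, so no \emph{new} short geodesics are created — only the remaining $\frakc_2,\hdots,\frakc_n$ can still be short. Once you replace ``isometric embedding'' with the length-nonincreasing pullback argument, your proof matches the paper's.
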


Assuming the Proposition, we immediately obtain this section's main result:

\begin{theorem}\label{Vor bound not attained}\NotAttained\end{theorem}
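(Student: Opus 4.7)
The plan is to derive Theorem \ref{Vor bound not attained} purely as a bookkeeping consequence of three earlier results: Lemma \ref{this ain't it}, Theorem \ref{Vor bound attained}, and Proposition \ref{there's a max}. The theorem has three independent assertions, which I would handle in turn.

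First, the equivalence ``sharp if and only if attained'' is the only part that uses Proposition \ref{there's a max}. The direction ``attained implies sharp'' is immediate: if a hyperbolic surface with Euler characteristic $\chi$ and $n$ cusps admits a radius-$r_{\chi,n}^k$ packing by $k$ disks, then Proposition \ref{Vor bound} makes that bound the supremum. For the converse, I would point out that for any fixed $\chi<0$ and $n\geq 0$ there are at most two diffeomorphism types of smooth surfaces $\Sigma$ of hyperbolic type with Euler characteristic $\chi$ and $n$ cusps: a non-orientable one, and, when $\chi+n$ is even, also an orientable one (by the relations in (\ref{g vs x vs b})). The supremum of the $k$-disk packing radius over all such hyperbolic surfaces is therefore the supremum of $\maxi_k$ over the finite union of moduli spaces $\mathfrak{M}(\Sigma)$. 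By Proposition \ref{there's a max}, $\maxi_k$ attains a maximum on each $\mathfrak{M}(\Sigma)$, hence on their union. If that maximum equals $r_{\chi,n}^k$ --- i.e., the bound is sharp --- then it is realized by some surface, so the bound is attained.

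Second, for the $n=0$ assertion, Theorem \ref{Vor bound attained} specialized to $n=0$ produces, whenever $k$ divides $6\chi$, a closed hyperbolic surface of Euler characteristic $\chi$ equipped with a packing by $k$ disks of radius $r_{\chi,0}^k$; so the bound is attained. Conversely, the first sentence of Lemma \ref{this ain't it} states that no such closed surface exists when $k$ does not divide $6\chi$. Combining these gives: for $n=0$, $r_{\chi,0}^k$ is attained if and only if $k\mid 6\chi$.

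Third, the final assertion --- that for fixed $\chi<0$ and $n>0$ only finitely many $k\in\mathbb{N}$ admit an attaining surface --- is literally the content of the second sentence of Lemma \ref{this ain't it}. No further work is needed.

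The genuine obstacles are all earlier in the paper: Lemma \ref{this ain't it} relies on the triangulation structure forced by equality in Proposition \ref{Vor bound} together with the asymptotics $\alpha(r_{\chi,n}^k)\to \pi/3$ as $k\to\infty$, and Proposition \ref{there's a max} requires ruling out escape to infinity in moduli space (handled via Lemma \ref{the FLoBUS} on inflating the thick part when a short geodesic is present). Given those two inputs, the deduction of Theorem \ref{Vor bound not attained} is essentially immediate.
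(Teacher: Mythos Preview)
Your proposal is correct and follows essentially the same route as the paper: reduce to at most two moduli spaces for given $(\chi,n)$, invoke Proposition \ref{there's a max} so that the supremum of $\maxi_k$ is actually a maximum (giving ``sharp $\Leftrightarrow$ attained''), and read the remaining assertions off Lemma \ref{this ain't it}. The only difference is cosmetic: you explicitly cite Theorem \ref{Vor bound attained} for the ``if $k\mid 6\chi$ then attained'' direction in the $n=0$ case, whereas the paper's brief justification leaves that implicit.
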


This is because for fixed $\chi$ and $n$ there are at most two $n$-punctured surfaces of hyperbolic type with Euler characteristic $\chi$: one orientable and one non-orientable.  By Lemma \ref{this ain't it}, for all but finitely many $k$ the maximum of $\maxi_k$ on each of their moduli spaces (which exists by the Proposition) is less than $r_{\chi,n}^k$.

To prove the Proposition we need some preliminary observations on $\maxi_k$, some basic facts about the topology of moduli space, and some non-trivial machinery due originally to Thurston.

\begin{lemma}\label{maximin} For any surface $\Sigma$ of hyperbolic type, and any $k\in\mathbb{N}$ there exist $c_k>0$ and $\epsilon_k>0$ such that $\maxi_k(F)\geq c_k$ for every $F\in\mathfrak{M}(\Sigma)$, and any packing of $F$ by $k$ disks of radius $\maxi_k(F)$ is contained in the $\epsilon_k$-thick part of $F$.  For each such $F$, there is a packing of $F$ by $k$ disks of radius $\maxi_k(F)$.\end{lemma}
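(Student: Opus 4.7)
The plan is to prove the three claims in sequence: the uniform lower bound $\maxi_k \geq c_k$, the thick-part localization, and the existence of a maximizing packing on each $F$. The key tool is the thick--thin decomposition provided by the Margulis lemma.

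First, I will exhibit a uniform lower bound on $\maxi_k$. In dimension $2$ there is a universal $\epsilon_M > 0$ such that for each $\epsilon \leq \epsilon_M$ and any complete finite-area hyperbolic surface $F$, the $\epsilon$-thin part of $F$ is a disjoint union of horoball cusp neighborhoods and collars about simple closed geodesics of length less than $2\epsilon$. Direct area computations (in horocyclic coordinates at a cusp and Fermi coordinates along a short geodesic) show each cusp contributes at most $2\epsilon$ and each collar at most $4\epsilon$ to the $\epsilon$-thin area. Since the number of disjoint simple closed geodesics is bounded by the topology, the total $\epsilon$-thin area is $O(\epsilon)$ with constants depending only on $(\chi,n)$. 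The total area of $F$ equals $-2\pi\chi$ by Gauss--Bonnet, so one can fix $\epsilon_0 \leq \epsilon_M$ small enough that the $\epsilon_0$-thick part of any such $F$ has area at least some $A_0 > 0$ depending only on $\Sigma$.

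Now pack $F$ greedily by open disks of radius $r \leq \epsilon_0$ with centers in the $\epsilon_0$-thick part and pairwise center-distances exceeding $2r$; each such disk embeds since its center has injectivity radius at least $\epsilon_0 \geq r$. If the process ever terminates with fewer than $k$ disks, every point of the $\epsilon_0$-thick part lies within distance $2r$ of some center, so the thick part is covered by at most $k-1$ closed balls of radius $2r$ with total area at most $(k-1)\cdot 2\pi(\cosh 2r - 1)$. Choosing $c_k \in (0, \epsilon_0]$ small enough that this expression is strictly less than $A_0$ at $r = c_k$ forces the process to produce $k$ disks, yielding $\maxi_k(F) \geq c_k$. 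For the thick-part containment, set $\epsilon_k = c_k$: the center of any disk in a packing of radius $\maxi_k(F) \geq c_k$ has injectivity radius at least $c_k$ and therefore lies in the $c_k$-thick part.

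For existence of a maximizing packing on a fixed $F$, take packings with radii $r_n \to \maxi_k(F)$ and center tuples $(p_1^{(n)}, \ldots, p_k^{(n)})$. Eventually $r_n \geq c_k/2$, so each $p_i^{(n)}$ lies in the $(c_k/2)$-thick part of $F$, which is compact (standard finiteness for finite-area hyperbolic surfaces). Pass to a subsequence so $p_i^{(n)} \to p_i \in F$ for each $i$. Since the injectivity radius is $1$-Lipschitz on $F$, $\mathrm{inj}(p_i) \geq \liminf_n r_n = \maxi_k(F)$, so the open disk of radius $\maxi_k(F)$ about $p_i$ embeds; passing the inequality $d(p_i^{(n)}, p_j^{(n)}) \geq 2 r_n$ to the limit for $i \neq j$ ensures disjointness. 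The main obstacle is the uniform lower bound $c_k$: since $\mathfrak{M}(\Sigma)$ is non-compact, it is not a priori clear that the packing radius cannot collapse as one degenerates toward the boundary of moduli space, and the Margulis-lemma control of the thin part is what rules this out.
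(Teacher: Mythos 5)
Your proof correctly establishes the uniform lower bound $c_k$ and the existence of a maximizing packing, but the lower-bound argument takes a genuinely different route. You derive a definite lower bound on the area of the $\epsilon_0$-thick part via the Margulis lemma and cusp/collar area estimates, then run a greedy packing argument. The paper instead cites Yamada's universal lower bound $\sinh^{-1}(2/\sqrt{3})$ on the \emph{maximal injectivity radius} of a complete hyperbolic surface, and lets $c_k$ be the maximal common radius of $k$ disjoint disks packed inside a single disk of that radius. Your approach is self-contained where the paper's is a citation, but yours requires more bookkeeping (bounding the number of thin components, verifying the area computations) to reach the same conclusion. The existence argument (eventually all centers lie in a fixed compact thick part; pass to a convergent subsequence; use $1$-Lipschitzness of injectivity radius) is essentially identical to the paper's, which phrases it as continuity of the packing-radius function on $F^k$ and compactness of $(F_{[c_k,\infty)})^k$.

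There is, however, a genuine gap in your treatment of the thick-part containment. The lemma asserts that the \emph{disks}, not just their centers, lie in the $\epsilon_k$-thick part; a packing is by definition a collection of disjoint open balls. You only argue that the center $p$ of each disk satisfies $\mathrm{inj}(p)\geq c_k$. But a disk of radius $R=\maxi_k(F)\geq c_k$ about a point with $\mathrm{inj}(p)\geq R$ can perfectly well penetrate into the $c_k$-thin part: at a point $x$ near the boundary of the disk one only gets $\mathrm{inj}(x)\geq R-d(x,p)>0$, which is not uniformly bounded below, and one can realize configurations where the minimal injectivity radius over the disk is close to $\sinh^{-1}\big(\tfrac12(1-e^{-2R})\big)$, which is strictly less than $c_k$ when $R=c_k$. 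So your choice $\epsilon_k=c_k$ is also quantitatively too large. The paper fills this gap with a separate claim: for $r$ below the Margulis constant, the $r$-neighborhood of $F_{[r,\infty)}$ lies in $F_{[r',\infty)}$ with $r'=\sinh^{-1}\big(\tfrac12(1-e^{-2r})\big)$, proved by a collar/cusp trigonometric estimate. One then applies this with $r=c_k$, observing that every point of the disk of radius $R$ about $p$ lies within distance $c_k$ of some point with injectivity radius at least $c_k$ (walk in from the boundary of the disk toward $p$), and takes $\epsilon_k=r'(c_k)<c_k$. You need some version of that argument to close the gap.
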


\begin{proof}  There is a universal lower bound of  $\sinh^{-1}(2/\sqrt{3})$ on the maximal injectivity radius of a hyperbolic surface.   (This was shown by Yamada \cite{Yamada}.)  The value of $\maxi_k$ on any hyperbolic surface $F$ is therefore at least the maximum radius $c_k$ of $k$ disks embedded without overlapping in a single disk of radius $\sinh^{-1}(2/\sqrt{3})$.  This proves the lemma's first assertion.

For a maximal-radius packing of $F$ by $k$ disks, the center of each disk is thus contained in the \textit{$c_k$-thick part} $F_{[c_k,\infty)}$ of $F$, the set of $x\in F$ such that the injectivity radius of $F$ at $x$ is at least $c_k$.  The second assertion follows immediately from the (surely standard) fact below:

\begin{claim}  For any $r$ less than the two-dimensional Margulis constant, and any hyperbolic surface $F$, the $r$-neighborhood in $F$ of $F_{[r,\infty)}$ is contained in $F_{[r',\infty)}$, where $r' = \sinh^{-1}\left(\frac{1}{2}(1-e^{-2r})\right)$.\end{claim}

\begin{proof}[Proof of claim] We will use the following hyperbolic trigonometric fact: for a quadrilateral with base of length $\delta$ and two  sides of length $h$, each meeting the base at right-angles, the lengths $\delta$ of the base, $h$ of the sides it meets, and $\ell$ of the remaining side are related by
\begin{align}\label{quad} \sinh(\ell/2) = \cosh h\sinh(\delta/2). \end{align}
Now fix $x\in\partial F_{[r,\infty)}$ and let $U$ be the component of the $\epsilon$-thin part of $F$ that contains $x$, where $\epsilon$ is the two-dimensional Margulis constant.  Suppose first that $U$ is an annulus, and let $h$ be the distance from $x$ to the core geodesic $\gamma$ of $U$.  If $\gamma$ has length $\delta$ then applying (\ref{quad}) in the universal cover we find that there is a closed geodesic arc of length $\ell$ (as defined there) based at $x$ and freely homotopic (as a closed loop) to $\delta$.  It thus follows that $\cosh h = \sinh r/\sinh(\delta/2)$.  Let us now assume that $\delta \leq 2\sinh^{-1}\left(\frac{\sinh r}{\cosh r}\right)$, so that $h \geq r$.  For a point $y$ at distance $h-r$ from $\delta$, the geodesic arc based at $y$ in the free homotopy class of $\delta$ has length $\ell'$ given by
\[ \sinh(\ell'/2) = \cosh(h-r)\sinh(\delta/2) = \sinh r\left(\cosh r - \sqrt{\sinh^2 r - \sinh^2(\delta/2)}\right) \]
(using the ``angle addition'' formula for hyperbolic cosine.)  As a function of $\delta$, $\ell'$ is increasing, and its infimum at $\delta = 0$ satisfies $\sinh(\ell'/2) = \sinh r(\cosh r - \sinh r) = \frac{1}{2}\left(1-e^{-2r}\right)$.  Therefore the injectivity radius at $y$ is at least $r'$ defined in the claim.  Note also that at $\delta = 2\sinh^{-1}\left(\frac{\sinh r}{\cosh r}\right)$ we have $\ell' = \delta$ as expected.  So when $h\leq r$, i.e.~when $U$ is contained in the $r$-neighborhood of $F_{[r,\infty)}$, the injectivity radius at every point of $U$ is at least $\delta/2 > r'$.

Another hyperbolic trigonometric calculation shows that if $U$ is a cusp component of the $\epsilon$-thin part of $F$ then for $y$ at distance $r$ from $x$ along the geodesic ray from $x$ out the cusp, the shortest geodesic arc based at $y$ has length $\ell'$ satisfying the same formula as above.  Hence in this case as well, the injectivity radius is at least $r' = \ell'/2$ at every point of $U$ in the $r$-neighborhood of $F_{[r,\infty)}$.\end{proof}

For the Lemma's final assertion we note that $\maxi_k(F)$ is the supremum of the function on $F^k$ which records the maximal radius of a packing by equal-radius disks centered at $x_1,\hdots,x_k$ for any $(x_1,\hdots,x_k)\in F^k$.  (If $x_i = x_j$ for some $j\neq i$ then we take its value to be $0$.)  This function is clearly continuous on $F^k$.  By the Lemma's first assertion it attains a value of $c_k$, so it approaches its supremum on the compact subset $(F_{[c_k,\infty)})^k$ of $F^k$.\end{proof}

\begin{lemma}\label{continuous} For any surface $\Sigma$ of hyperbolic type, and any $k\in\mathbb{N}$, $\maxi_k$ is continuous on $\mathfrak{T}(\Sigma)$, therefore also on $\mathfrak{M}(\Sigma)$.\end{lemma}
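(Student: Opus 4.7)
The plan is to use the standard characterization of convergence in $\mathfrak{T}(\Sigma)$ via almost-isometric comparison on compact sets, together with Lemma \ref{maximin}, which confines any maximal $k$-disk packing to a fixed thick part of the surface.

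Fix $(F, \phi) \in \mathfrak{T}(\Sigma)$ and suppose $(F_n, \phi_n) \to (F, \phi)$. Let $\epsilon_k > 0$ be as in Lemma \ref{maximin}. The key fact I would invoke, standard for cusped hyperbolic surfaces (e.g.~via Fenchel--Nielsen coordinates, or from convergence of the holonomy representation on a compact core), is that for each $n$ one may choose a diffeomorphism $g_n \colon F \to F_n$ homotopic to $\phi_n \circ \phi^{-1}$ such that $g_n$ restricted to a fixed compact neighborhood of $F_{[\epsilon_k,\infty)}$, and $g_n^{-1}$ restricted to a corresponding compact neighborhood of $(F_n)_{[\epsilon_k,\infty)}$, are both $L_n$-bilipschitz with $L_n \to 1$ as $n \to \infty$. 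This is the only point at which the Teichm\"uller topology enters.

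By Lemma \ref{maximin}, $F$ admits a packing $\{B(x_i, r)\}_{i=1}^k$ of $k$ disks of radius $r = \maxi_k(F)$, all contained in $F_{[\epsilon_k,\infty)}$. For large $n$, the $g_n$-images are pairwise disjoint embedded regions in $F_n$, each of which contains a metric ball $B(g_n(x_i), r/L_n)$; those smaller balls are embedded in $F_n$ because the originals are embedded in $F$ and $g_n$ is bilipschitz. Hence $\maxi_k(F_n) \geq \maxi_k(F)/L_n$. A symmetric argument, starting from a maximal packing of $F_n$ (which by Lemma \ref{maximin} exists and is contained in the $\epsilon_k$-thick part of $F_n$) and using $g_n^{-1}$, gives $\maxi_k(F) \geq \maxi_k(F_n)/L_n$. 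Combining these two inequalities and letting $n \to \infty$ shows $\maxi_k(F_n) \to \maxi_k(F)$. This proves continuity on $\mathfrak{T}(\Sigma)$; continuity on $\mathfrak{M}(\Sigma)$ follows because $\maxi_k$ is mapping-class-group invariant and $\mathfrak{M}(\Sigma)$ carries the quotient topology.

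The main obstacle, which is really a bookkeeping matter rather than a substantive difficulty, is justifying the bilipschitz approximation in the cusped setting, where surfaces are non-compact and thin parts (cusps) need not match up under any diffeomorphism in a controlled way. This is resolved by restricting attention to thick parts, whose geometry does vary continuously with the Teichm\"uller parameter, and by observing via Lemma \ref{maximin} that the packings live entirely inside these thick parts; the cusps are then simply irrelevant to the problem.
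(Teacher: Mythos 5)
Your argument reaches the correct conclusion but takes a genuinely different route from the paper's. The paper adopts the topology of approximate isometries on $\mathfrak{T}(\Sigma)$ (citing \cite[3.2.15]{CEG}): a basic $K$-neighborhood of $(F,\phi)$ consists of all $(F',\phi')$ admitting a \emph{global} $K$-bilipschitz diffeomorphism $\Phi\colon F\to F'$ in the correct homotopy class. With that in hand the proof is essentially one line: lengths of based geodesic arcs at and between the disk centers change by a factor in $[1/K,K]$, so $\maxi_k(F)/K \leq \maxi_k(F')\leq K\maxi_k(F)$, and no appeal to Lemma \ref{maximin} is needed at all. You instead assume only bilipschitz control on compact neighborhoods of the thick parts and compensate by invoking Lemma \ref{maximin} to confine a maximal packing to the $\epsilon_k$-thick part. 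This sidesteps the need to know that global almost-isometries exist across cusps, at the cost of importing Lemma \ref{maximin}. One point that deserves more care than your sketch gives it: your claim that $B(g_n(x_i), r/L_n)$ is embedded in $F_n$ ``because the originals are embedded and $g_n$ is bilipschitz'' requires that every geodesic loop based at $g_n(x_i)$ of length less than $2r/L_n$ (and likewise every short arc between two translated centers) actually lie inside the region where $g_n^{-1}$ is $L_n$-bilipschitz. Such a loop or arc a priori could dip into the thin part; one rules this out by taking the compact region to be a sufficiently large neighborhood of the thick part and using the Claim in Lemma \ref{maximin} to control injectivity radius on such a neighborhood. You acknowledge this as bookkeeping, which is fair, but it is the one place where the proof is not yet complete as written.
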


\begin{proof}  We will take the topology on $\mathfrak{T}(\Sigma)$ to be that of approximate isometries as in \cite[3.2.15]{CEG}, with a basis consisting of $K$-neighborhoods of $(F,\phi)\in\mathfrak{T}(\Sigma)$ for $K\in(1,\infty)$.  Such a neighborhood consists of those $(F',\phi')\in\mathfrak{T}(\Sigma)$ for which there exists a $K$-bilipschitz diffeomorphism $\Phi\co F\to F'$ with $\Phi\circ \phi$ homotopic to $\phi'$.  (This is equivalent to the ``algebraic topology'' of \cite[3.1.10]{CEG}, which is in turn equivalent to that induced by the Teichm\"uller metric, cf. \cite[\S 11.8]{FaMa}. In particular Mumford's compactness criterion holds for the induced topology on $\mathfrak{M}(\Sigma)$, see \cite[Prop.~3.2.13]{CEG}.)

It is obvious that $\maxi_k$ is continuous with this topology.  For given a geodesic arc $\gamma$ of length $\ell$ on $F$ and a $K$-bilipschitz diffeomorphism $\Phi\co F\to F'$, the geodesic arc in the based homotopy class of $\Phi\circ \gamma$ has length between $\ell/K$ and $K\ell$.  Now for a packing of $F$ by $k$ disks of radius $\maxi_k(F)$ there are finitely many geodesic arcs of length $\ell\doteq 2\maxi_k(F)$ joining the disk centers, and every other arc based in this set has length at least some $\ell_0>\ell$.  Choosing $K$ near enough to $1$ that $\ell_0/K >K\ell$ we find for every $(F',\phi')$ in the $K$-neighborhood of $(F,\phi)$ that $\maxi_k(F)/K \leq \maxi_k(F') \leq K\maxi_k(F)$.\end{proof} 

The main ingredient in the proof of Proposition \ref{there's a max} is Lemma \ref{the FLoBUS} below.  It was suggested by a referee for \cite{DeB_locmax}, who sketched the proof I give here.

\begin{lemma}\label{the FLoBUS}  Let $\Sigma$ be a surface of hyperbolic type, and fix $r_1 > r_0 >0$, both less than the two-dimensional Margulis constant.  For any essential simple closed curve $\mathfrak{c}$ on $\Sigma$, and any $S\in\mathfrak{T}(\Sigma)$ such that the geodesic representative $\gamma$ of $\frakc$ has length less than $2r_0$ in $S$, there exists $S_0\in\mathfrak{T}(\Sigma)$ where the geodesic representative $\gamma_0$ of $\frakc$ has length between $2r_0$ and $2r_1$, and a path-Lipschitz map (one which does not increase the length of paths) from $S_0-\gamma_0$ onto a region in $S$ containing the complement of the component of the $r_1$-thin part containing $\gamma$.\end{lemma}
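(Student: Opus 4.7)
The plan is to construct $S_0$ by a Fenchel--Nielsen length deformation along $\frakc$ and to build the path-Lipschitz map $\phi\co S_0-\gamma_0\to S$ by piecing together an explicit ``radial power map'' on a Margulis tube with a controlled extension over the thick complement.

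First I would record tube geometry. Formula (\ref{quad}) from the Claim in the proof of Lemma \ref{maximin} gives that in any hyperbolic surface, the component of the $r_1$-thin part containing a closed geodesic of length $\ell\in(0,2r_1)$ is its closed $h$-equidistant neighborhood, where $\cosh h=\sinh r_1/\sinh(\ell/2)$; the boundary is a simple closed curve of length $\ell\cosh h=\ell\sinh r_1/\sinh(\ell/2)$. Both $h$ and the boundary length strictly decrease in $\ell$. Fix $\ell_0\in[2r_0,2r_1]$ and let $S_0$ be obtained from $S$ by a Fenchel--Nielsen length deformation along $\frakc$ (with respect to some pants decomposition containing $\frakc$), changing only its length from $\ell<2r_0$ to $\ell_0$. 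Then the $r_1$-tube $T_0\subset S_0$ around $\gamma_0$ is strictly narrower than the $r_1$-tube $T\subset S$ around $\gamma$, and has shorter boundary.

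The heart of the argument is an explicit equivariant 1-Lipschitz map between hyperbolic cylinders. Work in the upper half-plane model of $\mathbb{H}^2$, placing lifts $\tilde\gamma_0$ and $\tilde\gamma$ on the positive imaginary axis, with primitive covering translations $g_0(z)=e^{\ell_0}z$ and $g(z)=e^\ell z$ respectively. With $\lambda=\ell/\ell_0\in(0,1)$, define $\tilde\phi\co\mathbb{H}^2\to\mathbb{H}^2$ in polar coordinates by
\[ \tilde\phi(re^{i\theta})=r^\lambda e^{i\theta}. \]
Equivariance $\tilde\phi\circ g_0=g\circ\tilde\phi$ is immediate. A derivative calculation using the source orthonormal hyperbolic frame $(r\sin\theta\,\partial_r,\,\sin\theta\,\partial_\theta)$ (and the analogous target frame) shows that $D\tilde\phi$ is diagonal with singular values $\lambda$ (radial) and $1$ (angular), so $\tilde\phi$ is 1-Lipschitz in the hyperbolic metric. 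Hence $\tilde\phi$ descends to a 1-Lipschitz map between the quotient cylinders $\mathbb{H}^2/\langle g_0\rangle\to\mathbb{H}^2/\langle g\rangle$; since $\tilde\phi$ preserves the angular coordinate, it carries the lift of $T_0\setminus\gamma_0$ into a proper sub-annulus of the lift of $T$.

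Finally, to assemble $\phi\co S_0-\gamma_0\to S$ I would use the descended power map on $T_0\setminus\gamma_0$ and extend across the thick complement using the identification provided by the Fenchel--Nielsen deformation. The main obstacle lies in this extension step: because the length deformation alters the hyperbolic structure on the pair(s) of pants adjacent to $\frakc$, the thick complements $S_0-T_0$ and $S-T$ are not literally isometric. I plan to work around this by introducing an annular transition region $T'\supset T$ slightly wider than the $r_1$-tube and interpolating between the power map and an isometric identification of the complement of $T'$ by a 1-Lipschitz isotopy supported in $T'\setminus T$. That this interpolation can be arranged to be 1-Lipschitz---so that the composite is path-Lipschitz and has image covering all of $S-T$---is the technical heart of the proof, and relies on the monotonicity of tube widths in $\ell$ together with the Margulis-constant hypothesis ensuring that the tubes are embedded and well separated from all other thin parts.
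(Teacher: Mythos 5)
Your radial power map $\tilde\phi(re^{i\theta})=r^{\lambda}e^{i\theta}$, $\lambda=\ell/\ell_0\le 1$, is correct and genuinely $1$-Lipschitz on the annular cover: in the upper half-plane metric $\frac{dr^2/r^2+d\theta^2}{\sin^2\theta}$ the pullback is $\frac{\lambda^2\,dr^2/r^2+d\theta^2}{\sin^2\theta}$, so the singular values are $\lambda$ and $1$. The problem is the assembly step, which is not a technicality that can be absorbed by a ``transition annulus'' --- it is the entire content of the lemma, and as you have set things up it appears to fail.

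After a pure Fenchel--Nielsen length deformation along $\frakc$, the complements $S_0-T_0$ and $S-T$ are not merely non-isometric: some distances in the adjacent pairs of pants actually \emph{decrease} as the length of $\frakc$ increases. Concretely, for a pair of pants with cuff lengths $\ell_1,\ell_2,\ell_3$ and $\ell_1$ the length of $\frakc$, the right-angled-hexagon identity gives
\[ \cosh d_{12} \;=\; \frac{\cosh(\ell_3/2)+\cosh(\ell_1/2)\cosh(\ell_2/2)}{\sinh(\ell_1/2)\sinh(\ell_2/2)}, \]
and $\frac{\partial}{\partial \ell_1}\cosh d_{12}<0$: lengthening $\frakc$ shortens the distance to the other cuffs. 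So near $\partial T_0$ the surface $S_0$ is in parts \emph{smaller} than $S$, and there is no reason a $1$-Lipschitz map from $S_0-\gamma_0$ can be surjective onto all of $S-T$ while agreeing with an isometry far from $\frakc$ (such an isometry does not exist) or with a controlled interpolation near it. Your proposal acknowledges this is ``the technical heart'' but offers only a plan; with the monotonicity you cite working in both directions, the plan is not realizable as stated.

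The paper circumvents this by using Thurston's \emph{strip deformations} (following Papadopoulos--Th\'eret \cite{PT}) rather than a Fenchel--Nielsen twist: it cuts $S$ open along $\gamma$, passes to the Nielsen extension, chooses a geodesic arc $\alpha$ meeting the two boundary geodesics perpendicularly, cuts along the induced complete geodesic $\hat\alpha$, and glues in an $\epsilon$-strip $B$. The new surface $\widehat{S}_0$ then literally contains an isometric copy of $\widehat{S}$ cut along $\hat\alpha$, and the collapse-the-strip map $f\co\widehat{S}_0\to\widehat{S}$ is tautologically $1$-Lipschitz (\cite[Prop.~2.2]{PT}); there is nothing to interpolate. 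Deleting funnels and regluing gives $S_0\in\mathfrak{T}(\Sigma)$, and the remaining work is hyperbolic trigonometry showing that as $\epsilon$ varies the new length of $\frakc$ runs continuously from that of $\gamma$ to $\infty$, so it can be placed in $(2r_0,2r_1)$. The paper also treats the separating case separately (two strips, two parameters $\epsilon_1,\epsilon_2$ to match boundary lengths), a case your proposal does not address. If you want to salvage a Fenchel--Nielsen approach you would need a new idea to produce the Lipschitz map on the thick part; as it stands, the strip-deformation mechanism is doing essential work that your argument does not replace.
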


\begin{proof}  The construction uses the \textit{strip deformations} described by Thurston \cite{Thurston}.  We will appeal to a follow-up by Papadopolous--Th\'eret \cite{PT} for details and a key geometric assertion.  Below we take $\frakc$ to have geodesic length less than $2r_0$ in $S$. We will assume first that $\frakc$ is non-separating.  The separating case raises a few complications that we will deal with after finishing this case.

To begin the strip deformation construction, cut $S$ open along the geodesic representative $\gamma$ of $\frakc$ to produce a surface with two geodesic boundary components $\gamma_1$ and $\gamma_2$.  Then adjoin a funnel to this surface along each $\gamma_i$ to produce its \textit{Nielsen extension} $\widehat{S}$.  Now fix a properly embedded geodesic arc $\alpha$ in the cut-open surface that meets each of $\gamma_1$ and $\gamma_2$ perpendicularly at an endpoint.  One can obtain such an arc $\alpha$ starting with a simple closed curve $a$ in $S$ that intersects $\gamma$ once: if $a\cap\gamma = \{x\}$ then regarding $a$ as a closed path in $S$ based at $x$, any lift $\tilde{a}$ to the universal cover has its endpoints on distinct components $\tilde\gamma_1$ and $\tilde\gamma_2$ of the preimage of $\gamma$; this lift is then homotopic through arcs with endpoints on $\tilde\gamma_1$ and $\tilde\gamma_2$ to their common perpendicular $\tilde\alpha$, which projects to $\alpha$.

In $\widehat{S}$, $\alpha$ determines a bi-infinite geodesic $\hat{\alpha}$ with one end running out each funnel. We now cut $\widehat{S}$ open along $\hat\alpha$ and glue in an ``$\epsilon$-strip'' $B$ (terminology as in \cite{PT}), which is isometric to the region in $\mathbb{H}^2$ between two ultraparallel geodesics whose common perpendicular (the \textit{core} of $B$) has length $\epsilon$, by an isometry from $\partial B$ to the two copies of $\hat\alpha$ in the cut-open $\widehat{S}$.  This isometry should take the endpoints of the core of $B$ to the midpoints of the copies of $\alpha$ in the two copies of $\hat\alpha$.  Let us call the resulting surface $\widehat{S}_0$.  There is a quotient map $f\co\widehat{S}_0\to\widehat{S}$ taking $B$ to $\hat\alpha$, which is the identity outside $B$ and on $B$ collapses arcs equidistant from the core.  By \cite[Proposition 2.2]{PT}, $f$ is $1$-Lipschitz.  (In \cite{PT}, applying $f$ is called ``peeling an $\epsilon$-strip'', and the respective roles of $\widehat{S}_0$ and $\widehat{S}$ here are played by $\hat{X}$ and $\hat{Y}_B$ there.)

For $i=1,2$, let $\hat\gamma_i$ be the geodesic in $\widehat{S}_0$ that is freely homotopic to $f^{-1}(\gamma_i)$, which is the union of the open geodesic arc $\gamma_i-\alpha$ with an equidistant arc from the core of $B$.  By gluing the endpoints of the core of $B$ to the midpoints of the copies of $\alpha$ we ensured that the $\hat\gamma_i$ have equal length, which depends on $\epsilon$ (we will expand on this assertion below).  We now remove the funnels that the $\hat\gamma_i$ bound in $\widehat{S}_0$, then isometrically identify the resulting boundary components yielding a finite-area surface $S_0$ (the choice of isometry is not important here).

Let $\gamma_0$ be the quotient of the $\hat\gamma_i$ in $S_0$.  The marking $\Sigma\to S$ induces one from $\Sigma$ to $S_0$ that takes $\frakc$ to a simple closed curve with geodesic representative $\gamma_0$.  The restriction of $f$ also induces a map from $S_0-\gamma_0$ to a region in $S$.  This is because the $\hat\gamma_i$ lie inside the preimage $f^{-1}(\overline{S-\gamma})$, where $\overline{S-\gamma}$ refers to the closure $(S-\gamma)\cup\gamma_1\cup\gamma_2$ of $S-\gamma$ inside its Nielsen extension.  This in turn follows from the fact that the nearest-point retraction from $\widehat{S}_0$ to $f^{-1}(\overline{S-\gamma})$, which is modeled on $B\cap \left(\widehat{S}_0-f^{-1}(\overline{S-\gamma})\right)$ by the orthogonal projection to a component of an equidistant locus to the core of $B$ and on the rest of $\widehat{S}_0-f^{-1}(\overline{S-\gamma})$ by the retraction of a funnel to its boundary, is distance-reducing.

Since $f$ does not increase the length of paths, the same holds true for the induced map $S_0-\gamma_0 \to S$.  Let us call $U$ the component of the $r_1$-thin part of $S$ containing $\gamma$.  We will show that $\epsilon$ can be chosen so that the $\hat\gamma_i$, and hence also $\gamma_0$, have length at least $2r_0$ and less than $2r_1$.  Since $f$ is Lipschitz, and the $f(\hat\gamma_i)$ are freely homotopic in $S$ to $\gamma$, it will follow that their images lie in $U$ and thus that the image of $S_0-\gamma_0$ contains $S-U$.

It seems intuitively clear that the length of the $\hat\gamma_i$ increases continuously and without bound with $\epsilon$, and that it limits to the length of $\gamma$ as $\epsilon\to 0$.  If this is so then it is certainly possible to choose $\epsilon$ so that the $\hat\gamma_i$, hence also their quotient in $S_0$, have length strictly between $2r_0$ and $2r_1$.  We will thus spend a few lines describing the hyperbolic trigonometric calculations to justify the assertions above on the length of the $\hat\gamma_i$, completing the proof of the non-separating case.

For $i = 1,2$, $\hat\gamma_i$ is freely homotopic in $\widehat{S}_0$ to the broken geodesic that is the union of the open geodesic arc $\gamma_i - \alpha$ with the geodesic arc contained in the $\epsilon$-strip $B$ that joins its endpoints.  Dropping perpendiculars to $\hat\gamma_i$ from the midpoints of these two arcs divides the region in $\widehat{S}_0$ between $\hat\gamma_i$ and the broken geodesic into the union of two copies of a pentagon $\calp$ with four right angles.  Let $x$ be the length of the base of $\calp$, half the length of $\hat\gamma_i$.  The sides opposite the base have lengths $\delta/2$ and $z$, where $\delta$ and $2z$ are the respective lengths of $\gamma$, and the arc in $B$ joining the endpoints of $\gamma-\alpha$.  See Figure \ref{nonsep}.

\begin{figure}
\begin{tikzpicture}[scale=0.7]


\draw [thick] (0,0) -- (8,0);
\draw [thick] (0,0) -- (0,-0.5);
\draw [thick] (8,0) arc (180:225:2.25);
\draw [thick] (5,0) arc (180:195:7.5);

\draw [thick] (0,-0.5) to [out=0,in=160] (8.66,-1.59);
\draw [thick] (8.66,-1.59) arc (160:170:2);
\draw [thick] (5.256,-1.94) to [out=10,in=170] (8.56,-1.91);

\draw [very thin] (7.9,-1.3) .. controls (7.95,-1.05) .. (8.15,-0.88);
\node at (7.8,-0.9) {\Small$\theta$};

\node at (-0.5,-0.25) {$\epsilon/2$};
\node at (8.5,-0.7) {$z$};
\draw [very thin, gray] (4.7,-1.94) -- (5.1,-1.94);
\draw [very thin, gray] (4.75,-1.35) -- (4.9,-1.84);
\draw [very thin, gray] (4.7,-0.85) -- (4.7,-0.1);
\node at (4.7,-1.1) {$x$};
\node at (10,-1.9) {$\delta/2$};
\draw [<-] (8.75,-1.8) -- (9.5,-1.8);

\node [below] at (3,-0.4) {$h$};
\node at (5.9,-1.3) {\large$\mathcal{P}$};
\node at (4,-0.28) {$\calq$};

\end{tikzpicture}
\caption{The case that $\mathfrak{c}$ is non-separating.}
\label{nonsep}
\end{figure}
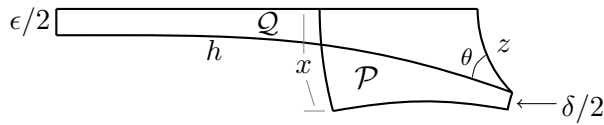

The side of length $z$ is also a side of a quadrilateral $\calq\subset B$ with its other sides in the core of $B$ (with length $\epsilon/2$), the core's perpendicular bisector, and a copy of $\alpha$ (with length $h$).  This quadrilateral has three right angles, and trigonometric formulas for such quadrilaterals (see \cite[VI.3.3]{Fenchel}) describe $z$ and the non-right angle $\theta$ in terms of $\epsilon/2$ and $h$:\begin{align}\label{quadfacts}
	& \sinh z = \cosh h \sinh (\epsilon/2) &
	& \sin\theta  = \cosh(\epsilon/2)/\cosh z \end{align}
The non-right angle of $\calp$ has measure $\pi/2+\theta$, so from trigonometric laws for pentagons with four right angles \cite[VI.3.2]{Fenchel}, we now obtain:
\[ \cosh x  = \cosh h\sinh(\epsilon/2)\sinh(\delta/2) + \cosh(\epsilon/2)\cosh(\delta/2) \]
Recall that $x$ is half the length of the $\hat\gamma_i$.  And indeed we find that $x$ increases with $\epsilon$, continuously and without bound, and it limits to $\delta/2$ (half the length of $\gamma$) as $\epsilon\to 0$.

Now suppose $\frakc$ is separating.  In this case we choose geodesic arcs $\alpha_1$ and $\alpha_2$, each properly embedded in the surface obtained by cutting $S$ along the geodesic representative $\gamma$ of $\frakc$ and meeting the boundary  perpendicularly, one in each component. These may again be obtained from a simple closed curve $a\subset S$, this time one which intersects $\gamma$ twice, such that no isotopy of $a$ reduces the number of intersections. Lifting the arcs $a_1$ and $a_2$ of $a$ on either side of $\gamma$ to the universal cover, separately homotoping each to a common perpendicular between lifts of $\gamma$, then pushing back down to $S$ and cutting along $\gamma$ yields the $\alpha_i$.

After cutting $S$ along $\gamma$, we take $\gamma_i$ to be the geodesic boundary of the component containing $\alpha_i$, for $i=1,2$.  After constructing $\widehat{S}$ as before, we cut it open along both $\hat\alpha_1$ and $\hat\alpha_2$ then, for each $i=1,2$, attach an $\epsilon_i$-strip $B_i$ along the boundary of the component containing $\gamma_i$.  In the resulting disconnected surface $\widehat{S}_0$, for each $i$ let $\hat\gamma_i$ be the geodesic in the free homotopy class of the union of the two arcs of $\gamma_i-\alpha_i$ with the two arcs in $B_i$ joining the endpoints of these arcs that are on the same side of its core.

We now customize $\epsilon_1$ so that $\gamma_1$ has length greater than $2r_0$ but less than $2r_1$, then customize $\epsilon_2$ so that $\gamma_2$ has the same length as $\gamma_1$.  Then as in the previous case we can cut off the funnels that the $\hat\gamma_i$ bound in $\widehat{S}_0$ and glue the resulting boundary components isometrically to produce a surface $S_0$ with a $1$-Lipschitz map to $f^{-1}(K)$, for $K$ as before.  Again what remains is to verify that the length of each $\hat\gamma_i$ increases continuously and without bound with $\epsilon_i$, and that it approaches the length $\delta$ of $\gamma$ as $\epsilon_i\to 0$.

The extra complication in this case arises from the fact alluded to above, that each $\alpha_i$ intersects $\gamma$ twice.  And we cannot guarantee that the points of intersection are evenly spaced along $\gamma$.  So for each $i$ the funnel of $\widehat{S}_0$ bounded by $\hat\gamma_i$, which contains the broken geodesic $f^{-1}(\gamma_i)$, has only one reflective symmetry.  Its fixed locus is the disjoint union of the perpendicular bisectors of the two components of $\gamma_i-\alpha_i$, which divide the region between $\hat\gamma_i$ and $f^{-1}(\gamma_i)$ into two isometric hexagons with four right angles.  One such hexagon $\mathcal{H}$ is pictured in Figure \ref{sep}.

\begin{figure}
\begin{tikzpicture}[scale=0.7]


\draw [thick, gray] (0,0) -- (8,0);
\draw [thick, gray] (0,0.5) -- (0,-0.5);
\draw [thick, gray] (0,0.5) to [out=0,in=200] (8.66,1.59);
\draw [thick, gray] (0,-0.5) to [out=0,in=160] (8.66,-1.59);

\draw [thick] (8,0) arc (180:225:2.25);
\draw [thick] (8,0) arc (180:135:2.25);

\draw [thick] (8.66,1.59) arc (200:188:2.6);
\draw [thick, dashed] (8.66,1.59) arc (200:254:2.6);
\node at (8.8,1.9) {$b$};

\draw [thick] (8.66,-1.59) arc (160:168:2.6);
\draw [thick, dashed] (8.66,-1.59) arc (160:106:2.6);
\draw [<-] (8.75,-1.8) -- (9.25,-1.8);
\node at (9.6,-1.8) {$a$};

\draw [thick] (6,2.2) to [out=-10,in=187] (8.52,2.1);
\draw [thick] (5.75,-1.83) to [out=5,in=170] (8.56,-1.91);
\draw [thick] (6,2.2) to [out=260,in=95] (5.75,-1.83);

\draw [very thin] (7.9,1.3) .. controls (7.95,1.05) .. (8.15,0.88);
\node at (7.8,0.9) {\Small$\theta$};
\draw [very thin] (7.9,-1.3) .. controls (7.95,-1.05) .. (8.15,-0.88);
\node at (7.8,-0.9) {\Small$\theta$};

\node at (-0.5,-0.25) {$\epsilon/2$};
\node [right] at (8,0) {$2z$};
\draw [very thin, gray] (5.25,-1.83) -- (5.65,-1.83);
\draw [very thin, gray] (5.5,2.2) -- (5.9,2.2);
\draw [very thin, gray] (5.45,0.45) -- (5.7,2.1);
\draw [very thin, gray] (5.45,-1.73) -- (5.45,-0.1);
\node at (5.45,0.25) {$x$};

\node [below] at (3,-0.4) {$h$};
\node at (9.5,-0.85) {$y$};
\node at (6.5,1.6) {\large$\mathcal{H}$};

\draw [very thin] (9.87,-0.2) .. controls (9.82,0) .. (9.87,0.2);
\node at (9.6,0) {\small$\psi$};
\node [right] at (10.35,0) {$p$};
\fill (10.35,0) circle [radius=0.07];

\end{tikzpicture}
\caption{The case that $\mathfrak{c}$ is separating and $\epsilon$ is small.}
\label{sep}
\end{figure}
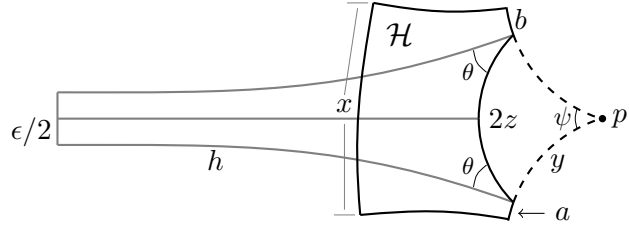

One side of $\mathcal{H}$ has length $x$, half the length of $\hat\gamma_i$ as in the previous case.  The side opposite this one has length $2z$, for $z$ as in (\ref{quadfacts}).  (Here we are suppressing the dependence on $i$ for convenience, but note that $\alpha_1$ and $\alpha_2$ may have different lengths, so ``$z$'' refers to different lengths in the cases $i=1$ and $2$.)  The angle at each endpoint of the side with length $z$ is $\pi/2+\theta$, where again $\theta$ is given by (\ref{quadfacts}) (with the same caveat as for $z$).  The sides meeting this one are contained in $\gamma_i-\alpha_i$, and calling their lengths $a$ and $b$, we have that $a+b = \delta/2$ is half the length of $\gamma$.

When $\epsilon$ is small, the geodesics containing the sides of lengths $a$ and $b$ meet at a point $p$ in $B$.  Their sub-arcs that join $p$ to the endpoints of the side of length $2z$ form an isosceles triangle with two angles of $\pi/2-\theta$.  Applying some hyperbolic trigonometric laws and simplifying gives that the angle $\psi$ at $p$ and the length $y$ of the two equal-length sides respectively satisfy:\begin{align}\label{trifacts}
	& \cos\psi = 2\sinh^2 h\sinh^2(\epsilon/2) - 1 &
	&  \cosh y = \frac{\cosh(\epsilon/2)}{\sqrt{1 - \sinh^2 h \sinh^2(\epsilon/2)}} \end{align}
From these formulas we find in particular that this case holds when $\sinh(\epsilon/2) < 1/\sinh h$.  From the same law for pentagons as in the non-separating case we now obtain:\begin{align*}
	\cosh x & = \sinh(y+a)\sinh(y+b) - \cosh(y+a)\cosh(y+b)\cos\psi \\
	   & = \frac{1}{2}\left[(1-\cos\psi)\cosh(2y+a+b) - (1+\cos\psi)\cosh(a-b) \right] \end{align*}
The second line above is obtained by applying the equation $\sinh x\sinh y = \frac{1}{2}(\cosh(x+y) - \cosh(x-y))$ and its analog for hyperbolic cosine.  Applying hyperbolic trigonometric identities and substituting from (\ref{trifacts}), after some work we may rewrite the right side above as:
\[ \cosh \epsilon\cosh (a+b) + \sinh\epsilon\cosh h\sinh(a+b) + \sinh^2 h\sinh^2(\epsilon/2)(\cosh(a+b) - \cosh(a-b)) \]
This makes it clear that $x$ increases continuously with $\epsilon$, and that $x\to a+b$ as $\epsilon\to 0$.  Recalling that $x$ is half the length of $\hat\gamma_i$ and $a+b = \delta/2$ half that of $\gamma_i$, we have all but one of the desired properties of $\hat\gamma_i$.  But there is a ``phase transition'' at $\epsilon=2\sinh^{-1}(1/\sinh h)$.  Note that as $\epsilon$ approaches this from below, from (\ref{quadfacts}) we have\begin{align*}
	& \sinh z \to \coth h && \sin\theta \to \frac{\cosh h}{\sqrt{\cosh^2 h + \sinh^2 h}} \end{align*}
From Proposition \ref{Vor bound} we now recall that the angle $\beta$ at a finite vertex of a horocyclic ideal triangle and half the length $r$ of its compact side satisfy $\sin\beta = 1/\cosh r$.  One easily verifies that the limit values of $\cosh z$ and of $\sin(\pi/2-\theta) = \cos\theta$ are reciprocal, so geometrically, what is happening as $\epsilon\to2\sinh^{-1}(1/\sinh h)$ is that $p$ is sliding away from the core of $B$ along its perpendicular bisector, reaching an ideal point at $\epsilon = 2\sinh^{-1}(1/\sinh h)$.

For $\epsilon>2\sinh^{-1}(1/\sinh h)$ we therefore expect the geodesics containing $a$ and $b$ to have ultraparallel intersection with $B$.  Their common perpendicular is then one side of a quadrilateral in $B$ with opposite side of length $2z$. If $d$ is the length of this common perpendicular and $y$ the length of the two remaining sides, from standard hyperbolic trigonometric formulas \cite[VI.3.3]{Fenchel} we have:\begin{align*}
	& \cosh d = 2\sinh^2 h\sinh^2(\epsilon/2)-1 &
	& \cosh y = \frac{\sinh z}{\sinh(d/2)} = \frac{\cosh h\sinh(\epsilon/2)}{\sqrt{\sinh^2 h\sinh^2(\epsilon/2)-1}} \end{align*}
We now apply the hyperbolic law of cosines for right-angled hexagons \cite[VI.3.1]{Fenchel}, yielding:\begin{align*}
	\cosh x & = \sinh(y+a)\sinh(y+b) \cosh d-\cosh(y+a)\cosh(y+b) \\
	   & = \frac{1}{2}[\cosh (2y+a+b)(\cosh d - 1) - \cosh(a-b)(\cosh d+1)]\end{align*}
Manipulating this formula along the lines of the previous case now establishes that $x$ increases continuously and without bound with $\epsilon$, and comparing the resulting formula with the previous one shows that its limits coincide as $\epsilon$ approaches $2\sinh^{-1}(1/\sinh h)$ from above and below.  The lemma follows.
\end{proof}

\begin{proof}[Proof of Proposition \ref{there's a max}]  Fix $k\in\mathbb{N}$ and a surface $\Sigma$ of hyperbolic type, and let $r_1 = \min\{\epsilon_0/2,\epsilon_k\}$ and $r_0 = r_1/2$, where $\epsilon_0$ is the two-dimensional Margulis constant and $\epsilon_k$ is as in Lemma \ref{maximin}.  For any $S\in\mathfrak{T}(\Sigma)$ with a geodesic of length less than $2r_0$, we claim that there exists $S'\in\mathfrak{T}(\Sigma)$ with all geodesics of length at least $2r_0$ and $\maxi_k(S')\geq \maxi_k(S)$.

Let $\frakc_1,\hdots,\frakc_n$ be the curves in $\Sigma$ whose geodesic representatives have length less than $2r_0$ in $S$, and let $U$ be the component of the $r_1$-thin part of $S$ containing the geodesic representative $\gamma_1$ of $\frakc_1$.  Lemma \ref{the FLoBUS} supplies some $S_1'\in\mathfrak{T}(\Sigma)$, with a path-Lipschitz map $f$ from $S'_1-\gamma_1'$ to a region in $S$ containing $S-U$, where the geodesic representative $\gamma_1'$ of $\frakc_1$ in $S_1'$ has length between $2r_0$ and $2r_1$.  A packing of $S$ by $k$ disks of radius $\maxi_k(S)$ is entirely contained in $S-U$ by Lemma \ref{maximin} and our choice of $r_1$.  Therefore if $x_1,\hdots,x_k$ are the disk centers in $S$, points $x_1',\hdots,x_k'$ of $S_1'$ mapping to the $x_i$ under $f$ are the centers of a packing of $S_1'$ by disks of the same radius, since $f$ does not increase the length of any based geodesic arc.

We note that for a point $x\in S-U$ where the injectivity radius is at least $r_0$, the injectivity radius is also at least $r_0$ at any $x'\in S_1'$ mapping to $U$: every geodesic arc based at $x'$ that intersects $\gamma_1'$ has length greater than the Margulis constant, and the length of every other arc is decreased by $f$.  It follows that only $\frakc_2,\hdots,\frakc_n$ may have geodesic length less than $2r_0$ in $S_1'$.  Iterating now yields the claim.

Now for a sequence $(S_i)$ of surfaces in $\mathfrak{M}(\Sigma)$ on which $\maxi_k$ approaches its supremum, we use the claim to produce a sequence $(S_i')$, with $\maxi_k(S_i') \geq \maxi_k(S_i)$ for each $i$, such that $S_i'$ has systole length at least $2r_0$ for each $i$.  This sequence has a convergent subsequence in $\mathfrak{M}(\Sigma)$, by Mumford's compactness criterion \cite{Mumford}, and since $\maxi_k$ is continuous on $\mathfrak{M}(\Sigma)$ it attains its maximum at the limit.\end{proof}

\bibliographystyle{plain}
\bibliography{many_disks}

\end{document}